\newtheorem{theorem}{Theorem}[section]
\newtheorem{lemma}[theorem]{Lemma}
\newtheorem{proposition}[theorem]{Proposition}
\newtheorem{corollary}[theorem]{Corollary}
\theoremstyle{definition}
\newtheorem{definition}[theorem]{Definition}
\newtheorem{assumption}[theorem]{Assumption}
\theoremstyle{remark}
\newtheorem{remark}[theorem]{Remark}
\newtheorem{example}[theorem]{Example}
\numberwithin{equation}{section}
\newcommand{\e}{\operatorname{e}}	
\newcommand{\im}{\mathrm{i}}		
\newcommand{\N}{\mathbf{N}}		
\newcommand{\Z}{\mathbf{Z}}		
\newcommand{\R}{\mathbf{R}}		
\newcommand{\uno}{\mathbbm{1}}		
\newcommand{\E}{\mathbb{E}}		
\newcommand{\Prob}{\mathbb{P}}		
\newcommand{\field}[1][F]{\mathscr{#1}}	
\newcommand{\loc}{{\textrm{\tiny loc}}}
\newcommand{\cref}[2][]{\ensuremath{c_{\text{\ref{#2}#1}}}}
\newcommand{\supmart}{\mathcal{E}}
\newcommand{\energy}{\mathcal{G}}
\newcommand{\vf}{\mathfrak{B}}
\newcommand{\terminal}{\lower-1.8pt\hbox{\footnotesize\danger}\xspace}
\newcommand{\memo}[1]{ 
  \ensuremath{
    \framebox{\tiny\textbf{\kern-2pt\textsf{#1}}\kern-2pt}
  }
  \xspace
}
\newcommand{\arxiv}[1]{\href{http://arxiv.org/abs/#1}{arXiv:#1}}
\renewcommand{\MR}[1]{
  \href{http://www.ams.org/mathscinet-getitem?mr=#1}{\texttt{\Tiny[MR #1]}}
}
\begin{document}
  \title{Uniqueness and blow--up for the noisy viscous dyadic model}
  \author[M. Romito]{Marco Romito}
    \address{Dipartimento di Matematica, Universit\`a di Pisa, Largo Bruno Pontecorvo 5, I--56127 Pisa, Italia}
    \email{\href{mailto:romito@dm.unipi.it}{romito@dm.unipi.it}}
    \urladdr{\url{http://www.dm.unipi.it/pages/romito}}
  \subjclass[2010]{Primary 76D03; Secondary 35Q35 60H15 35R60}
  \keywords{dyadic model, infinite dimensional system of stochastic equations, path--wise
    uniqueness, blow--up}
  \date{October 29, 2011}
  \begin{abstract}
    We consider the dyadic model with viscosity and additive Gaussian noise
    as a simplified version of the stochastic Navier--Stokes equations, with
    the purpose of studying uniqueness and emergence of singularities. We
    prove path--wise uniqueness and absence of blow--up in the intermediate
    intensity of the non--linearity, morally corresponding to the 3D case,
    and blow--up for stronger intensity. Moreover, blow--up happens with
    probability one for regular initial data.
  \end{abstract}
\maketitle
\section{Introduction}
\subsection*{Motivations}

Uniqueness is a problem with many facet for PDEs and different problems may
require different approaches. When turning to stochastic PDEs, the problem
acquires new levels of complexity, as uniqueness for stochastic processes
can be understood in different ways (path--wise, in law, etc.). There
are several recent result on this topic (see for instance \cite{Fla11}
for a review).

A prototypical example of PDE without uniqueness are the Navier--Stokes
equations, where the issue of uniqueness is mixed with the issue of
regularity and emergence of singularities (see \cite{Fef06}). The analysis
of the stochastic version of the equations has also received a lot of
attention, and in recent years, by means of a clever way to solve
the Kolmogorov equation, Da Prato and Debussche \cite{DapDeb03} (see also
\cite{DebOda06}) have shown existence of Markov families of solutions to
the problem, proving that such Markov families admit, under suitable
assumptions on the noise, a unique invariant measure, with exponential
convergence rate \cite{Oda07}. In \cite{FlaRom06,FlaRom08} similar
results have been obtained with a completely different method, based on
the Krylov selection method \cite{Kry73}. Related results can be found
in \cite{FlaRom07,DapDeb08,Fla08,Rom08,Rom08a,Rom08b,RomXu09,Rom10a,AlbDebXu10}.

The two methods apply equally well in more general situations (as done
for instance in \cite{BloFlaRom09}). The purpose of this paper is to
analyse, with a view on uniqueness and emergence of blow--up, a much
simpler infinite dimensional stochastic equation which anyway retains most
of the characteristics of the original problem and which makes the methods
of \cite{DapDeb03} and \cite{FlaRom08} applicable. To understand which
system could provide a good candidate, the essential point is to identify
the non--linearity. If we look at the Navier--Stokes non--linearity on
the torus with periodic boundary conditions, we have in Fourier variables,
\[
  (u\cdot\nabla)u
    = \im\sum_{\mathbf{k}\in\Z^3,\mathbf{k}\neq0}
      \Bigl(
        \sum_{\mathbf{l}+\mathbf{m}=\mathbf{k},\ \mathbf{l},\mathbf{m}\neq0} (u_\mathbf{l}\cdot\mathbf{m})u_\mathbf{m}
      \Bigr)\e^{\im\mathbf{k}\cdot x},
\]
that is, the $\mathbf{k}^\text{th}$ mode interacts with almost every
other mode. The most reasonable simplification is to reduce the complexity
of the interaction to a finite number of modes, while keeping the orthogonality
property (which gives the conservation of energy), and the simplest is the
nearest neighbour interaction. This is the dyadic model.
\subsection*{The dyadic model}

The dyadic model has been introduced in \cite{FriPav04a,KatPav05} as a model
of the interaction of the energy of an inviscid fluid among different packets
of wave--modes (shells). It has been lately studied in
\cite{KisZla05,Wal06,CheFriPav07,BarFlaMor08,BarFlaMor09b} and in the
inviscid and stochastically forced case in \cite{BarFlaMor09a,BrzFlaNekZeg10,BarFlaMor10}).

The viscous version has been studied in \cite{FriPav04b,Che08,CheFri09},
and in particular \cite{Che08} proves blow--up of \emph{positive} solutions
for the problem with non--linearity of strong intensity, and later in
\cite{BarMorRom10} the authors prove well--posedness and convergence to the
inviscid limit, again for positive solutions, with non--linearity of intensity
of ``Navier--Stokes'' type.

In this paper we study the dyadic model with additive noise,
\begin{equation}\label{e:dyadic}
  dX_n = \bigl( - \nu\lambda_n^2 X_n
                + \lambda_{n-1}^\beta X_{n-1}^2
                - \lambda_n^\beta X_n X_{n+1}\bigr)\,dt
         + \sigma_n\,dW_n,
    \qquad n\geq1,
\end{equation}
where $\lambda = 2$, $\lambda_n = \lambda^n$ and $X_0\equiv0$.
The dispersion coefficients satisfy suitable assumptions (see
Assumption~\ref{a:noise}) and the parameter $\beta$ measures the relative
intensity of the non--linearity with respect to the linear term.

The non--linear term \emph{formally} satisfies the following property,
\[
  \sum_{n=1}^\infty X_n\bigl(\lambda_{n-1}X_{n-1}^2 - \lambda_n^\beta X_nX_{n+1}\bigr)
    = 0,
\]
providing an a--priori bound of $X = (X_n)_{n\geq1}$ in $\ell^2(\R)$
which is independent of $\beta$. On the other hand the linear term and
the non--linear term are roughly of the same size if
\[
  \lambda_n^2 X_n\approx\lambda_n^\beta X_n^2
    \qquad\leadsto\qquad
  \lambda_n^{\beta-2}X_n\approx O(1),
\]
which is the essential reason why local strong solutions exist when
the initial condition decays at least as $\lambda_n^{-(\beta-2)}$
(see Theorem~\ref{t:strong}). If $\beta\leq 2$ this is always true
due to the $\ell^2$ bound explained above, and in fact \cite{Che08}
proves that the non--rndom problem has a unique global solution.
The same method also works in the noisy case and it is straightforward
to prove path--wise uniqueness in the case $\beta\leq2$.

By a scaling argument (see for instance \cite{Che08}), one can
``morally'' identify the dyadic model with the Navier--Stokes equations
when $\beta\approx\tfrac52$. In \cite{BarMorRom10} well--posedness
is proved in a range which includes the value $\tfrac52$, but
only for \emph{positive} solutions (positivity is preserved by the
unforced dynamics). It is clear that, as is, positivity is broken by
the random perturbation.
\subsection*{Main results}

This paper contains a thorough analysis of the case $\beta>2$, which
can be roughly summarised in the table below.
\begin{table*}[h]
  \centering
  \begin{tabular}{|l|c|c|c|}
    \hline
               & $\beta\leq 2$ & $2<\beta\leq 3$ & $\beta>3$ \\\hline
    blow--up   & NO              & NO${}^\star$ & YES            \\\hline
    uniqueness & YES             & YES           & ?              \\\hline
  \end{tabular}
\end{table*}

We prove path--wise uniqueness (Theorem~\ref{t:unique}) in the range
$\beta\in(2,3]$ by decomposing the solution in a quasi--positive
component and a residual term. Quasi--positivity (see Section~\ref{s:negative})
is preserved by the system as long as the random perturbation is
\emph{not too strong} (the meaning of this will be explained below).
Under the same conditions the residual term is small and provides
a lower bound for the solution.

The quasi--positivity, together with the invariant area argument
of \cite{BarMorRom10}, implies smoothness of the solution
(Theorem~\ref{t:smooth}), where by smoothness we mean that
$(\lambda_n^\gamma X_n)_{n\geq1}$ is bounded for every $\gamma$.
This result holds for $\beta\in(2,\beta_c)$, where $\beta_c\in(2,3]$
is the value idenfied in \cite{BarMorRom10}.

When $\beta>3$ we use an idea of \cite{Che08}, which only works
for positive solutions, together with quasi--positivity, to
identify a set of initial conditions which leads to blow--up
with positive probability (Theorem~\ref{t:blowup}).
While there are already cases where emergence of blow--up is proved,
as for instance \cite{DebDeb02,DebDeb05} for the Schr\"odinger
equation, \cite{MueSow93,Mue00,DozLop09} for the nonlinear heat equation
(there is also \cite{BonGro09}, but their result is essentially
one dimensional and no ideas for infinite dimensional
systems are involved), such results essentially
prove only that blow--up occurs with positive probability.

Our main result on blow--up for the dyadic model, Theorem~\ref{t:noway},
states that blow--up occurs with full probability, as long as
the initial condition satisfies $\lambda_n^\alpha X_n(0) = O(1)$
for some $\alpha>\beta-2$. This is optimal since it corresponds
to the condition which ensures the existence of a local smooth solution,
as remarked before. Essentially we prove that the
$\lambda_n^{-(\beta-2)}$--decay is transient.

It remains open to understand uniqueness for $\beta>3$, since blow--up
rules out the use of smooth solutions, making path--wise uniqueness
an harder problem. Uniqueness in law may still be achievable.
\subsection*{Methods}

Our results are essentially based on the four following ideas, introduced
in this paper to analyse the model.
\subsubsection*{Quasi--positivity}

As already mentioned, the deterministic dynamics preserves positivity
and an external forcing in principle destroys this. We prove that
a negative lower bound can still be proved under the condition that the
random perturbation is not too big (in a sense clarified in the next
idea) and the negative part of the initial condition is small.
This cannot be true for general initial states, and in fact we require
this to be true only for the modes $X_n(0)$ of the initial condition
corresponding to $n$ large enough.
\subsubsection*{Irrelevance of the perturbation}

The deterministic results of \cite{Che08,BarMorRom10} are not directly
applicable due to the presence of the random term, responsible of
the loss of positivity. Quasi--positivity works with a small perturbation
and in general the random perturbation does not stay small.

We recover smallness in two ways, equally effective: on the one hand
we do not need to have a small effect of the perturbation for every
component, but only for large enough modes (in the duality
frequency/wavelength this would correspond to small scales).
On the other hand, due to viscosity, the effect of the randomness
can be quantified in terms of regularity ({i.~e.} decay in terms
of powers of $\lambda_n$) and, roughly speaking, if the effect is
finite with respect to some decay, then it is small with respect
to any weaker decay.
\subsubsection*{Contraction of the negative components}

In order to prove that blow--up happens with full probability,
the above ideas are not sufficient, as they only insure that
it happens with positive probability. We give a stronger form
of quasi--positivity (Lemma~\ref{l:contraction}), namely that
the negative parts of the solution become smaller in a finite
time, depending only on the size of the initial condition in
$H$ and on the size of the random perturbation.
\subsubsection*{Recurrence}

We use the contraction of negative components to identify
an event, spanning a time interval, thus an event for trajectories,
that leads to a set where blow--up occurs. An argument
of recurrence for these sets finally shows that blow--up has
full probability. We remark that recurrence is not at all
obvious, since for $\beta>3$ the energy estimate is not strong
enough to provide existence of a stationary solution with standard
methods for dissipative stochastic PDEs.
\subsection*{Structure of the paper}

Section \ref{s:preliminary} contains the basic definitions and assumptions,
as well as the different notions of solution that will be used, together
with some existence results. In Section~\ref{s:negative} we show
the control on negative components, which the basic result on which the
paper is structured. Uniqueness and well--posedness are then proved
in Section~\ref{s:unique} for the intermediate range $2<\beta\leq3$.
Section~\ref{s:bu_abstract} contains some preliminary and general
considerations on the blow--up time. These considerations are used
in Section~\ref{s:blowup} to first identify events that lead to blow--up,
then to prove that such sets are, conditional to the absence of blow--up,
recurrent and hence that blow--up occurs with full probability.
\subsection*{Acknowledgements}

The author wish to thank the \emph{Institut \'Elie Cartan} of the
\emph{Université Henri Poincaré, Nancy I}, where part of this
work has been done, for the kind hospitality and the exciting working
environment. Warmful thanks to David Barbato, Franco Flandoli
and Johnathan Mattingly for useful conversations and insights
on the subject. This work is dedicated to Martina, she has
arrived whilst the paper was being completed.
\section{Preliminary results and definitions}\label{s:preliminary}

We start by stating the assumptions on the strength of the noise we shall
consider.
\begin{assumption}\label{a:noise}
  The sequence $(\sigma_n)_{n\geq1}$ of non--negative real numbers satisfies
  the following assumption of regularity: there is $\alpha_0\in\R$ such that
  \begin{equation}\label{e:noise}
    \sup_{n\geq1}\bigl(\lambda_n^{\alpha_0}\sigma_n\bigr)
      < \infty.
  \end{equation}
\end{assumption}
We shall see that, in order to ensure existence of solutions for
problem \eqref{e:dyadic}, we need to impose a restriction
on the possible values of $\alpha_0$, depending on the value
of the parameter $\beta$.
\begin{assumption}\label{a:noise2}
  If $\beta$ is the parameter of problem \eqref{e:dyadic},
  then the number $\alpha_0$ of Assumption~\ref{a:noise}
  above satisfies
  \begin{equation}\label{e:noise2}
    \alpha_0>\max\bigl\{\tfrac12(\beta-3), \beta-3\bigr\}
  \end{equation}
\end{assumption}
\subsection{Notations}

Set $\lambda = 2$ and $\lambda_n = \lambda^n$.
For every $\alpha\in\R$ denote by $V_\alpha$ the (Hilbert) space
\[
  V_\alpha
    = \{(x_n)_{n\geq1}: \sum_{n=1}^\infty (\lambda_n^\alpha x_n)^2<\infty\},
\]
with scalar product $\langle\cdot,\cdot\rangle_\alpha$ and norm
$\|\cdot\|_\alpha$ given by
\[
  \langle x,y\rangle_\alpha
    = \sum_{n=1}^\infty \lambda_n^{2\alpha} x_n y_n,
      \qquad
  \|x\|_\alpha
    = \Bigl(\sum_{n=1}^\infty (\lambda_n^\alpha x_n)^2\Bigr).
\]
Set in particular $H = V_0$ and $V = V_1$.

Let $\Omega_\beta = C([0,\infty);V_{-\beta})$ and define on $\Omega_\beta$ the
canonical process $(\xi_t)_{t\geq0}$ defined as $\xi_t(\omega)=\omega(t)$,
$t\geq0$. Define on $\Omega_\beta$ the canonical filtration $(\field[B]_t)_{t\geq0}$
where $\field[B]_t$ is the Borel $\sigma$--field of $C([0,t];V_{-\beta})$.
\subsection{Definitions of solution}

We turn to the definition of solution. We shall consider first strong solutions,
which are unique, regular but defined on a (possibly) random interval.
Then we will consider weak solutions (in three different flavours).
\subsubsection{Strong solutions}

We first state the definition of local strong solution.
\begin{definition}[Strong solution]\label{d:strong}
  Let $\mathcal{W}$ be an Hilbert sub--space of $H$.
  Given a probability space $(\Omega, \field, \Prob)$
  and a cylindrical Wiener process $(W_t, \field_t)_{t\geq0}$ on $H$, 
  a \emph{strong solution in $\mathcal{W}$} with initial condition
  $x\in\mathcal{W}$ is a pair $(X(\cdot;x),\tau_x^\mathcal{W})$ such that
  \begin{itemize}
    \item $\tau_x^\mathcal{W}$ is a stopping time with $\Prob[\tau_x^\mathcal{W}>0]=1$,
    \item $X(\cdot;x)$ is a process defined on $[0,\tau_x^\mathcal{W})$ with
      $\widetilde\Prob[X(0,x)=x]=1$,
    \item $X(\cdot;x)$ is continuous with values in $\mathcal{W}$ for $t<\tau_x^\mathcal{W}$,
    \item $\|X(t;x)\|_{\mathcal{W}}\to\infty$ as $t\uparrow\tau_x^\mathcal{W}$,
      $\widetilde\Prob$--{a.~s.},
    \item $X(\cdot;x)$ is solution of \eqref{e:dyadic} on $[0,\tau_x^\mathcal{W})$.
  \end{itemize}
\end{definition}
The strong solution turns out to be a Markov process (and even a strong Markov
process, but we do not need this fact here) in the following sense (see
\cite{IkeWat81} for further details).
Set $\mathcal{W}' = \mathcal{W}\cup\{\terminal\}$, where the terminal
state \terminal is an isolated point. Define the set $\overline W(\mathcal{W}')$
of all paths $\omega:[0,\infty)\to\mathcal{W}'$ such that there exists
a time $\zeta(\omega)\in[0,\infty]$ with $\omega$ continuous with values
in $\mathcal{W}$ on $[0,\zeta(\omega))$ and $\omega(t)=\terminal$
for $t\geq\zeta(\omega)$. The strong solution defined above can be extended
as a process in $[0,\infty)$ with values in $\mathcal{W}'$ in a canonical way,
achieving value \terminal for $t\geq\tau_x^{\mathcal{W}}$. We say
that the strong solution is Markov when the process on the extended
state space $\mathcal{W}'$ is a Markov process.
\begin{theorem}\label{t:strong}
  Let $\beta>2$ and assume \eqref{e:noise}, \eqref{e:noise2}. Let
  $\alpha\in(\beta-2,\alpha_0+1)$, then for every $x\in V_\alpha$
  there exists a strong solution $(X(\cdot;x), \tau_x^\alpha)$ with initial
  condition $x$.
  
  Moreover, the solution is unique in the sense that if $(X(\cdot;x),\tau_x)$
  and $(X'(\cdot;x),\tau_x')$ are two solutions, then
  $\Prob[\tau_x=\tau_x']=1$ and $X(\cdot;x)=X'(\cdot;x)$ for $t<\tau_x$.
  
  Finally, the process $(X(\cdot;x))_{x\in V_\alpha}$ is Markov, in the sense
  given above.
\end{theorem}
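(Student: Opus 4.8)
The plan is to exploit the additive structure of the noise to reduce the stochastic problem to a pathwise (deterministic, but $\omega$--dependent) evolution equation, and then to run a fixed--point argument in $C([0,T];V_\alpha)$ whose range of validity is dictated by the competition between the parabolic smoothing of the linear part and the regularity loss of the quadratic nonlinearity. First I would introduce the stochastic convolution $Z=(Z_n)_{n\ge1}$, the solution of the linear part,
\[
  Z_n(t) = \sigma_n\int_0^t \e^{-\nu\lambda_n^2(t-s)}\,dW_n(s),
\]
a family of independent scalar Ornstein--Uhlenbeck processes. A direct Gaussian computation gives $\E\|Z(t)\|_\alpha^2 \le \tfrac1{2\nu}\sum_n \lambda_n^{2\alpha-2}\sigma_n^2$, which under \eqref{e:noise} converges \emph{precisely} when $\alpha<\alpha_0+1$; a Kolmogorov--type continuity argument then yields $Z\in C([0,\infty);V_\alpha)$ almost surely for such $\alpha$. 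This is the origin of the upper constraint on $\alpha$ in the statement (the interval $(\beta-2,\alpha_0+1)$ being nonempty thanks to \eqref{e:noise2}). Writing $X=Y+Z$, the remainder $Y$ solves, path by path, the random equation $\dot Y_n = -\nu\lambda_n^2 Y_n + N(Y+Z)_n$ with $Y(0)=x$, where $N(u)_n = \lambda_{n-1}^\beta u_{n-1}^2 - \lambda_n^\beta u_n u_{n+1}$, and I would recast it in mild form,
\[
  Y(t) = S(t)x + \int_0^t S(t-s)\,N\bigl(Y(s)+Z(s)\bigr)\,ds,
\]
with $S(t)$ the diagonal semigroup with entries $\e^{-\nu\lambda_n^2 t}$.

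The two estimates on which everything rests, and the main technical point, are the smoothing bound $\|S(t)f\|_{\alpha} \le C(\nu t)^{-\delta/2}\|f\|_{\alpha-\delta}$ (obtained by maximising $x^\delta\e^{-2\nu tx}$) and the nonlinear bound
\[
  \|N(u)-N(v)\|_{\alpha-\delta} \le C\,\bigl(\|u\|_\alpha+\|v\|_\alpha\bigr)\|u-v\|_\alpha .
\]
The latter I would prove by writing $N(u)-N(v)$ through the associated symmetric bilinear form, extracting a factor $\sup_n\lambda_n^\alpha|u_n|\le\|u\|_\alpha$, and tracking the surviving power of $\lambda_n$; the resulting sum converges provided $\delta\ge\beta-\alpha$. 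Since the time integral $\int_0^t(t-s)^{-\delta/2}\,ds$ is finite only for $\delta<2$, the two estimates can be combined exactly when $\beta-\alpha<2$, i.e.\ $\alpha>\beta-2$, which is the lower constraint in the statement. For $T$ small (depending on $\sup_{[0,T]}\|Z\|_\alpha$ and on $\|x\|_\alpha$) the right--hand side is then a contraction on a ball of $C([0,T];V_\alpha)$, giving a unique local $Y$, hence $X=Y+Z$.

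Having a local solution, I would define the maximal existence time by the usual continuation procedure, $\tau_x^\alpha=\lim_{R\to\infty}\inf\{t:\|X(t)\|_\alpha\ge R\}$; the local theory shows that the solution extends as long as $\|X\|_\alpha$ stays bounded, whence the blow--up alternative $\|X(t)\|_\alpha\to\infty$ as $t\uparrow\tau_x^\alpha$ on $\{\tau_x^\alpha<\infty\}$. Uniqueness is pathwise and immediate from the Lipschitz estimate above together with Gronwall's inequality: on the common interval of existence two solutions coincide, and their maximal times therefore agree. I expect the only genuinely delicate points to be the balancing of the two exponents just described and the verification that $Z$ is continuous, not merely square--integrable, in $V_\alpha$; the contraction and continuation arguments are then routine.

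Finally, for the Markov property I would note that the construction realises the solution as a measurable map $X(\cdot;x)=\Phi(x,W)$ of the initial datum and of the driving path, and that the increments of $W$ after a time $s$ are independent of $\field_s$. The cocycle identity $X(t+s;x)=\Phi\bigl(X(s;x),\theta_sW\bigr)$, with $\theta_s$ the time shift, then yields the Markov property on the extended state space $\mathcal{W}'=V_\alpha\cup\{\terminal\}$, the absorbing state \terminal being reached exactly at $\tau_x^\alpha$; this is the standard argument for equations with additive noise, as in \cite{IkeWat81}.
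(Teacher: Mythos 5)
Your proposal is correct, and the two key exponent balances you identify are exactly the right ones: the nonlinearity loses $\beta-\alpha$ in regularity, the semigroup recovers $\delta<2$, and their compatibility is precisely $\alpha>\beta-2$, while the constraint $\alpha<\alpha_0+1$ is what guarantees $Z\in C([0,T];V_\alpha)$ (this is the paper's Lemma~\ref{l:zreg}). However, your route differs structurally from the paper's. The paper does not run a local fixed point at all: it truncates the nonlinearity with a cut--off $\chi_R(\|X^R\|_\alpha)$, as in \eqref{e:cutoff}, obtains a path--wise unique \emph{global} solution $X^R$ for each $R$, defines $\tau_x^{\alpha,R}$ as the first time $\|X^R\|_\alpha$ reaches $R$, and builds the strong solution by patching, with $\tau_x^\alpha=\sup_R\tau_x^{\alpha,R}$; Markovianity is then inherited from the Markovianity of the (global, everywhere--defined) processes $X^R$, rather than argued via a cocycle identity on the extended state space with terminal point as you do. You instead subtract the Ornstein--Uhlenbeck process, solve the random equation for $Y=X-Z$ path--wise by contraction in $C([0,T];V_\alpha)$, and continue to the maximal time. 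Both are standard and rest on the same two estimates; what each buys is different. Your approach makes the origin of the interval $(\beta-2,\alpha_0+1)$ completely transparent and reduces the SPDE to a deterministic argument, but it leaves to the reader the (routine, yet nontrivial) verifications that the random existence time is a stopping time, that the maximal solution is adapted, that uniqueness with the singular kernel $(t-s)^{-\delta/2}$ needs the singular--Gronwall/iteration variant, and that the cocycle argument survives the presence of blow--up. The paper's truncation buys two things: Markovianity comes for free from globally defined processes, and the auxiliary processes $X^R$ are reused later in the paper (notably in Lemma~\ref{l:recurrent}, where the moment bound \eqref{e:expo} is proved for $X^R$ and then transferred to the strong solution via $\tau_x^\alpha=\sup_R\tau_x^{\alpha,R}$), so the construction is not merely a device for existence. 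Two cosmetic points in your write--up: under \eqref{e:noise} the series $\sum_n\lambda_n^{2\alpha-2}\sigma_n^2$ is guaranteed to converge for $\alpha<\alpha_0+1$, but not ``precisely'' then, since \eqref{e:noise} is only an upper bound on $\sigma_n$; and the blow--up alternative in Definition~\ref{d:strong} requires the full limit $\|X(t)\|_\alpha\to\infty$ as $t\uparrow\tau_x^\alpha$, which needs the short additional argument that the local existence time is bounded below uniformly over data of bounded $V_\alpha$--norm (this rules out $\liminf<\infty$), not just the bare continuation criterion.
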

\begin{proof}
  Existence and uniqueness are essentially based on the same ideas
  of \cite[Theorem 5.1]{Rom10a}, but with simpler estimates, we give
  a sketch of their proof because we will use some of the definitions
  we introduce later. Let $\chi\in C^\infty([0,\infty))$ be
  decreasing and such that $\chi(u)=1$ for $u\leq 1$ and $\chi(u)=0$
  for $u\geq2$, and consider the problem
  \begin{equation}\label{e:cutoff}
    dX_n^R = - \nu\lambda_n^2 X_n^R\,dt
             + \chi_R(\|X^R\|_\alpha)\bigl(\lambda_{n-1}^\beta (X_{n-1}^R)^2 - \lambda_n^\beta X_n^R X_{n+1}^R\bigr)\,dt
             + \sigma_n\,dW_n.
  \end{equation}
  The problem has a (path--wise) unique global solution for every $x\in V_\alpha$,
  which is continuous in time with values in $V_\alpha$. Given $x\in V_\alpha$,
  define $\tau_x^{\alpha,R}$ as the first time $t$ when $\|X^R(t)\|_\alpha=R$, then
  $\tau_x^\alpha = \sup_{R>0}\tau_x^{\alpha,R}$ and the strong solution $X(t;x)$
  coincides with $X^R(t;x)$ for $t\leq\tau_x^{\alpha,R}$. By uniqueness the definition
  makes sense. Markovianity follows by the Markovianity of each $X^R$.
\end{proof}
We notice that by path--wise uniqueness, if $x\in V_\alpha$, then
$\tau_x^\alpha=\tau_x^{\alpha'}$ for every $\alpha'\in(\beta-2,\alpha)$.
\subsubsection{Weak martingale solutions}

The fact that the \emph{blow--up time} $\tau_x^\alpha$ associated to a strong solution may
(or may not) be infinite is the main topic of discussion of the paper. In order
to have global solutions in the general case we state a weaker definition of solution.
\begin{definition}[weak martingale solution]\label{d:weak}
  Given $x\in H$, a probability measure $\Prob$ on
  $\Omega_\beta$ is a \emph{weak martingale solution} of problem~\eqref{e:dyadic}
  with initial condition $x$ if
  \begin{enumerate}
    \item for every $n\geq1$, the process
      \[
        M^n_t
          =   \xi_n(t) - \xi_n(0)
            + \int_0^t \bigl( \nu\lambda_n^2 \xi_n(s) - \lambda_{n-1}^\beta \xi_{n-1}^2 + \lambda_n^\beta \xi_n \xi_{n+1}\bigr)\,ds,
     \]
      defined through the canonical process $\xi$, is a continuous square
      integrable martingale with quadratic variation $\sigma_n^2$
      (hence in particular a Brownian motion if $\sigma_n\neq0$), 
    \item $\Prob[\xi(0) = x] = 1$.
  \end{enumerate}
\end{definition}
It can be easily seen that this is equivalent to the standard definition of
martingale solution (a proof is given for instance in \cite{Fla08} for the
Navier--Stokes equations, but can be easily adapted to this problem).

Clearly both definition, of strong and weak solution, extend straightforwardly
respectively to initial $\field_0$--measurable random variables and to initial
distributions on $H$.

It turns out (see Theorem~\ref{t:markov} below) that the strong solution
represents the unique local solution, thus all weak solutions with the
same initial condition coincide with it up to the \emph{blow--up} time
$\tau_x^\alpha$, if we require that weak solutions satisfy an additional
property. Before stating the property, we need a few definitions for
preparation.

Let $\Prob$ be a weak martingale solution and define for each $n$ such
that $\sigma_n\neq0$ the process $W_n = \tfrac1{\sigma_n}M_n$ (and
choose a $W_n$ independent from all the other if $\sigma_n=0$).
Under $\Prob$ the $(W_n)_{n\geq1}$ are independent standard
one--dimensional Brownian motions and the sequence defined as
\[
  Z_n(t) = \sigma_n\Bigl(W_n(t)
           - \nu\lambda_n^2\int_0^t\e^{-\nu\lambda_n^2(t-s)}W_n(s)\,ds\Bigr)
\]
solves the following system,
\begin{equation}\label{e:stokes}
  \begin{cases}
    dZ_n + \nu\lambda_n^2 Z_n\,dt = \sigma_n\,dW_n,\\
    Z_n(0) = 0,
  \end{cases}
    \qquad
  n\geq1.
\end{equation}
Moreover, if $Y = \xi - Z$, where $\xi$ is the canonical process
on $\Omega_\beta$, thus a solution (in the sense of Definition
\ref{d:weak}) of \eqref{e:dyadic} under $\Prob$,
then $\Prob$--{a.~s.},
\begin{equation}\label{e:v}
  \dot Y_n + \nu\lambda_n^2 Y_n
    = \lambda_{n-1}^\beta(Y_{n-1}+Z_{n-1})^2 - \lambda_n^\beta(Y_n+Z_n)(Y_{n+1}+Z_{n+1}).
\end{equation}
Define $\energy_t$ as
\begin{multline*}
  \energy_t(y,z)
    =   \|y(t)\|_H^2
      + 2\nu\int_0^t\|y(s)\|_V^2\,ds - {}\\
      - 2\int_0^t\Bigl(\sum_{n=1}^\infty\lambda_n^\beta\bigl(y_n+z_n)(y_{n+1}z_n - y_n z_{n+1}\bigr)\Bigr)\,ds,
\end{multline*}
it is easy to see (using the lemma below) that $\energy_t(Y,Z)$
is finite if $Y\in L^\infty_\loc(0,\infty;H)\cap L^2_\loc(0,\infty;V)$
and Assumptions~\ref{a:noise}, \ref{a:noise2} hold, and is jointly measurable
in the variables $(t,y,z)$ (see \cite{BloFlaRom09,Rom08b} for a related problem).
We first give a regularity result for $Z$, which is standard (see
\cite{DapZab92})
\begin{lemma}\label{l:zreg}
  Assume \eqref{e:noise} with $\alpha_0\in\R$.
  Given $\alpha<\alpha_0+1$, then almost surely $Z\in C([0,T];V_\alpha)$
  for every $T>0$. Moreover, for every $\epsilon\in(0,1]$, with
  $\epsilon<\alpha_0+1-\alpha$, there are $\cref[-1.$\epsilon$]{l:zreg}>0$
  and $\cref[-2.$\epsilon$]{l:zreg}>0$,  such that for every $T>0$,
  \[
    \E\Bigl[\exp\Bigl(\frac{\cref[-2.$\epsilon$]{l:zreg}}{T^\epsilon}
        \sup_{[0,T]}\|Z(t)\|_\alpha^2\Bigr)\Bigr]
      \leq \cref[-1.$\epsilon$]{l:zreg}.
  \]
\end{lemma}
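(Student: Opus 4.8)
The plan is to exploit the explicit Gaussian (Ornstein--Uhlenbeck) representation of the solution of \eqref{e:stokes},
\[
  Z_n(t) = \sigma_n\int_0^t \e^{-\nu\lambda_n^2(t-s)}\,dW_n(s),
\]
which is a centred Gaussian process. By the It\^o isometry its variance obeys the two complementary bounds
\[
  \E[Z_n(t)^2] = \sigma_n^2\int_0^t \e^{-2\nu\lambda_n^2(t-s)}\,ds
    \le \sigma_n^2\min\Bigl\{t,\tfrac{1}{2\nu\lambda_n^2}\Bigr\},
\]
the first being relevant at low frequencies and short times, the second giving the stationary decay at high frequencies. Under \eqref{e:noise}, i.e.\ $\sigma_n\lesssim\lambda_n^{-\alpha_0}$, the stationary bound yields $\E[\|Z(t)\|_\alpha^2]\lesssim\sum_n\lambda_n^{2(\alpha-1-\alpha_0)}$, a series that converges exactly when $\alpha<\alpha_0+1$; this is the origin of the regularity threshold in the statement.

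For the pathwise regularity $Z\in C([0,T];V_\alpha)$ I would use the factorization method of \cite{DapZab92}. Fixing $\delta\in(0,\tfrac12)$, one writes
\[
  Z_n(t) = \frac{\sin\pi\delta}{\pi}\int_0^t (t-s)^{\delta-1}\e^{-\nu\lambda_n^2(t-s)}Y_n(s)\,ds,
    \qquad
  Y_n(s) = \sigma_n\int_0^s (s-r)^{-\delta}\e^{-\nu\lambda_n^2(s-r)}\,dW_n(r).
\]
H\"older's inequality in time with exponent $p>1/\delta$ bounds the kernel factor by a constant times $\lambda_n^{-2(\delta-1/p)}$, and Minkowski's inequality for $p\ge2$ then gives the deterministic estimate
\[
  \sup_{[0,T]}\|Z(t)\|_\alpha^2 \le C\,\|Y\|_{L^p(0,T;V_{\alpha''})}^2,
    \qquad \alpha''=\alpha-2\bigl(\delta-\tfrac1p\bigr),
\]
so that $Z$ inherits continuity in $V_\alpha$ from the a.s.\ integrability of the auxiliary Gaussian field $Y$ in $L^p(0,T;V_{\alpha''})$.

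It remains to produce the exponential moment with the precise normalisation $T^{-\epsilon}$. The key observation is that $Y$ enjoys the same variance dichotomy as $Z$, namely $\E[Y_n(t)^2]\lesssim\sigma_n^2\min\{t^{1-2\delta},\lambda_n^{-2(1-2\delta)}\}$; interpolating the two bounds with weight $\theta$ gives $\E[\|Y(t)\|_{\alpha''}^2]\lesssim t^{(1-2\delta)\theta}$ provided the accompanying series $\sum_n\lambda_n^{2(\alpha''-\alpha_0-(1-2\delta)(1-\theta))}$ converges. A direct computation shows that, as $p\to\infty$, both the attainable time exponent $(1-2\delta)\theta$ and the convergence condition collapse to the single requirement $(1-2\delta)\theta<\alpha_0+1-\alpha$, so that any target $\epsilon<\alpha_0+1-\alpha$ is reachable. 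Since $Y$ is a centred Gaussian element of the Banach space $B=L^p(0,T;V_{\alpha''})$ and, by Gaussian hypercontractivity, $\E[\|Y\|_B^2]\lesssim T^\epsilon$, a quantitative Fernique inequality—with universal constant, so that the admissible exponential rate scales like $(\E\|Y\|_B^2)^{-1}$—yields $\E[\exp(c\,T^{-\epsilon}\|Y\|_B^2)]\le c'$ uniformly in $T$. Combining this with the factorization bound on $\sup_{[0,T]}\|Z\|_\alpha^2$ gives the claim.

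I expect the main obstacle to be precisely this last step: extracting the power $T^\epsilon$ with a constant \emph{independent of $T$}. This forces one to track the interpolation between the short-time and stationary variance bounds carefully enough that the time exponent and the summability condition are both governed by the same quantity $\alpha_0+1-\alpha$, and then to invoke Fernique's theorem in its scale-invariant form, where the exponential rate is inversely proportional to the second moment; only then does the normalisation $T^{-\epsilon}$ survive uniformly as $T$ varies. The remaining estimates are routine Gaussian bookkeeping.
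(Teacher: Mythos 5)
The paper never actually proves Lemma~\ref{l:zreg}: it invokes it as standard, citing \cite{DapZab92}. Your proposal is essentially a reconstruction of that standard factorization argument with the $T$--dependence tracked explicitly, and its two quantitative pillars are sound: the variance dichotomy $\E[Z_n(t)^2]\le\sigma_n^2\min\{t,(2\nu\lambda_n^2)^{-1}\}$ and its analogue for $Y_n$, whose interpolation produces exactly the threshold $\epsilon<\alpha_0+1-\alpha$; and the scale--invariant exponential bound, which is legitimate because by the Borell--TIS inequality the weak variance of a centred Gaussian vector in a separable Banach space is dominated by $\E\|\cdot\|^2$, so $\E[\exp(c\|\cdot\|^2/\E\|\cdot\|^2)]$ is bounded by a universal constant for small universal $c$ --- and you are right that plain Fernique, without this quantification, would not give uniformity in $T$.

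There are, however, two genuine defects in the middle of the argument. \emph{First}, the Minkowski step points the wrong way: for $p\ge2$ the generalized Minkowski inequality gives
\[
  \|Y\|_{L^p(0,T;V_{\alpha''})}
    \le \Bigl(\sum_n\lambda_n^{2\alpha''}\|Y_n\|_{L^p(0,T)}^2\Bigr)^{1/2},
\]
so the componentwise H\"older bound yields $\sup_{[0,T]}\|Z(t)\|_\alpha^2\le C\sum_n\lambda_n^{2\alpha''}\|Y_n\|^2_{L^p(0,T)}$, a quantity that \emph{dominates} $\|Y\|^2_{L^p(0,T;V_{\alpha''})}$ rather than being dominated by it; moreover, at the exact exponent $\alpha''=\alpha-2(\delta-\tfrac1p)$ the alternative semigroup--smoothing route to an $L^p(0,T;V_{\alpha''})$ bound fails, since the resulting time kernel is $(t-s)^{-1}$, which is not integrable. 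The repair is painless and preserves all your computations: do not pass to $L^p(0,T;V_{\alpha''})$ at all, but run the Gaussian argument in the Banach space $B$ with mixed norm $\|y\|_B=\bigl(\sum_n\lambda_n^{2\alpha''}\|y_n\|^2_{L^p(0,T)}\bigr)^{1/2}$, in which $Y$ is still a centred Gaussian vector and which, by the componentwise H\"older estimate (with the kernel integrated over $(0,\infty)$, hence with constant independent of $T$), dominates $\sup_{[0,T]}\|Z(t)\|_\alpha$; your moment estimate $\E\|Y\|_B^2\lesssim T^{(1-2\delta)\theta+2/p}$ under the series condition is already componentwise, so it applies verbatim, and a.s.\ continuity of $Z$ in $V_\alpha$ also follows since the tail sums $\sum_{n\ge N}$ vanish uniformly in $t$. \emph{Second}, an endpoint is lost: the factorization requires $p\delta>1$, hence $2/p<2\delta$ and every reachable exponent $(1-2\delta)\theta+2/p$ is strictly below $1$; your argument therefore proves the lemma for all $\epsilon<\min\{1,\alpha_0+1-\alpha\}$ but not for $\epsilon=1$, which the statement allows whenever $\alpha_0+1-\alpha>1$. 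To cover that case one can bound $\E[\sup_{[0,T]}Z_n(t)^2]\le C\sigma_n^2\min\{T,\lambda_n^{-2}(1+\log(1+T\lambda_n^2))\}$ by Doob--type estimates for the one--dimensional Ornstein--Uhlenbeck components, sum over $n$, and apply Borell--TIS directly in $C([0,T];V_\alpha)$; short of that, one must weaken the statement to $\epsilon<1$, which is in fact all that the paper's applications (Lemma~\ref{l:moments_N} and its uses) ever require.
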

\begin{definition}[energy martingale solution]\label{d:energy}
  A weak martingale solution $\Prob$
  with initial condition $x$ is an \emph{energy} martingale solution if
  \begin{enumerate}
    \item $\Prob[Y\in L^\infty_\loc(0,\infty;H)\cap L^2_\loc([0,\infty);V)] = 1$, where
      $Y = \xi - Z$ and $Z$ is the solution to~\eqref{e:stokes} associated to
      $\Prob$,
    \item there is a set $T_\Prob\subset(0,\infty)$ of null Lebesgue measure
      such that for every $s\not\in T_\Prob$ and every $t>s$,
      \[
        \Prob[\energy_t(Y,Z)\leq\energy_s(Y,Z)] = 1.
      \]
  \end{enumerate}
\end{definition}
\begin{remark}
  Assume \eqref{e:noise} with $\alpha_0>0$.
  As in \cite{FlaRom06,FlaRom07} one can consider an alternative definition
  where the almost sure energy inequality is replaced by the supermartingale
  condition given as follows:
  \begin{itemize}
    \item $\Prob[L^\infty(0,\infty;H)\cap L^2(0,\infty;V)] = 1$,
    \item for every $m\geq1$ the process
      \[
        \supmart_t^m
          =   \|\xi(t)\|_H^{2m}
            + 2m\nu\int_0^t \|\xi(s)\|_H^{2m-2}\|\xi(s)\|_1^2\,ds
            - m(2m-1)\sigma^2\int_0^t \|\xi(s)\|_H^{2m-2}\,ds
      \]
      is an \emph{almost sure supermartingale}, that is there is a set $T_\Prob\subset(0,\infty)$
      of null Lebesgue measure such that for every $s\not\in T_\Prob$ and every
      $t\geq s$,
      \[
        \E^\Prob[\supmart^m_t|\field[B]_s]
          \leq \supmart_s^m,
      \]
  \end{itemize}
  where $\sigma^2 = \sum_n \sigma_n^2$.
\end{remark}
Given $\alpha>\beta-2$ and $R>0$, define the following random times
on $\Omega_\beta$,
\begin{equation}\label{e:tauinfty}
  \tau_\infty^\alpha
    = \inf\{t\geq0: \|\omega(t)\|_\alpha=\infty\},
      \qquad
  \tau_\infty^{\alpha,R}
    = \inf\{t\geq0: \|\omega(t)\|_\alpha>R\},
\end{equation}
with the understanding that each random time is $\infty$ if
the set is empty.
\begin{theorem}\label{t:markov}
  Let $\beta>2$ and assume \eqref{e:noise}, \eqref{e:noise2}.
  \begin{itemize}
    \item For every $x\in H$ there exists at least one energy martingale
      solution $\Prob_x$.  
    \item If $\alpha\in(\beta-2,1+\alpha_0)$, $x\in V_\alpha$ and
      $\Prob_x$ is an energy martingale solution with initial
      condition $x$, then $\tau_x^\alpha = \tau_\infty^\alpha$
      under $\Prob_x$ and for every $t>0$,
      \[
       \xi_s = X(s;x),
         \quad s\leq t,\quad
         \qquad\Prob_x-a.s.\text{ on }\{\tau_x^\alpha>t\},
     \]
      where $(X(\cdot;x),\tau_x^\alpha)$ is the strong solution
      with initial condition $x$ defined on $\Omega_\beta$.
    \item There exists at least one family $(\Prob_x)_{x\in H}$ of energy
      martingale solutions satisfying the almost sure Markov property, namely
      for every $x\in H$ and every bounded measurable $\phi:H\to\R$,
      \[
       \E^{\Prob_x}\bigl[\phi(\xi_t)|\field[B]_s]
         = \E^{\Prob_{\omega(s)}}[\phi(\xi_{t-s})],
         \qquad \Prob_x-a. s.,
     \]
      for almost every $s\geq0$ (including $0$) and for all $t\geq s$.
  \end{itemize}
\end{theorem}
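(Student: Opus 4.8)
The plan is to establish Theorem~\ref{t:markov} in three stages, matching its three bullet points, relying on the cutoff construction from the proof of Theorem~\ref{t:strong} and the regularity estimate of Lemma~\ref{l:zreg}.

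\textbf{Existence of an energy martingale solution.} First I would construct solutions by a Galerkin truncation, projecting \eqref{e:dyadic} onto the first $N$ modes and obtaining a finite--dimensional SDE with globally Lipschitz (after cutting the nonlinearity at high energy, or by exploiting the $\ell^2$ a--priori bound) coefficients, so that each truncated problem has a unique global solution. The key is to derive uniform--in--$N$ energy bounds: applying It\^o's formula to $\|Y^N\|_H^2$, using the formal orthogonality $\sum_n \lambda_n^\beta(y_n+z_n)(y_{n+1}z_n-y_nz_{n+1})$ as the only surviving nonlinear contribution to $\energy_t$, and Lemma~\ref{l:zreg} to control the terms involving $Z$, yields bounds in $L^\infty_\loc(0,\infty;H)\cap L^2_\loc(0,\infty;V)$ that are uniform in $N$. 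These bounds give tightness of the laws of $Y^N$ in $C([0,T];V_{-\beta})$ (compact embedding $V\hookrightarrow\hookrightarrow V_{-\beta}$ plus an equicontinuity estimate on the drift), and Prokhorov's theorem extracts a weakly convergent subsequence whose limit $\Prob$ I would identify as a weak martingale solution in the sense of Definition~\ref{d:weak}. The almost--sure energy inequality (Definition~\ref{d:energy}(2)) then follows from lower semicontinuity of $\energy_t$ under weak convergence, established away from an exceptional null set $T_\Prob$ of times, exactly as in the references \cite{FlaRom06,Fla08}.

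\textbf{Coincidence with the strong solution up to blow--up.} For the second bullet, fix $x\in V_\alpha$ with $\alpha\in(\beta-2,1+\alpha_0)$ and an energy martingale solution $\Prob_x$. The strategy is a localization/uniqueness argument. On the event $\{\tau_x^\alpha>t\}$ I want to show $\xi$ agrees with the strong solution $X(\cdot;x)$, and that the blow--up time $\tau_\infty^\alpha$ of $\xi$ in the $V_\alpha$--norm coincides with $\tau_x^\alpha$. The idea is to compare, before the cutoff threshold is reached, the process $Y=\xi-Z$ governed by \eqref{e:v} with the corresponding equation satisfied by the strong solution: both solve \eqref{e:v} driven by the \emph{same} $Z$ (since $Z$ is determined by $\Prob_x$ through \eqref{e:stokes}), and a Gronwall estimate on the difference $Y-Y^{\mathrm{strong}}$ in $V_\alpha$, using that the $V_\alpha$--norm stays bounded by $R$ up to $\tau_\infty^{\alpha,R}$ to tame the quadratic nonlinearity, forces the difference to vanish. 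Letting $R\uparrow\infty$ gives $\tau_\infty^\alpha=\tau_x^\alpha$ and the stated path coincidence on $\{\tau_x^\alpha>t\}$.

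\textbf{Existence of an almost--sure Markov family.} The third bullet is the hardest and is where I expect the main obstacle. I would invoke the Krylov--type Markov selection procedure of \cite{FlaRom08,Kry73,BloFlaRom09}: from the set of energy martingale solutions, which is convex and, crucially, has compact fibers over each initial datum (this compactness reuses the tightness and lower--semicontinuity bounds from the first stage, together with the measurability and joint continuity properties of $\energy_t$), one selects measurably, by successively optimizing a countable separating family of bounded functionals of the form $\E[\int_0^\infty e^{-\lambda t}f(\xi_t)\,dt]$, a family $(\Prob_x)_{x\in H}$ that is closed under disintegration and reconstruction at almost every time. The principal difficulty is verifying the two abstract hypotheses of the selection theorem in this infinite--dimensional setting: (i) that the solution set depends measurably on $x$ and is nonempty and compact for each $x$, and (ii) that the class of energy martingale solutions is stable under the conditioning and concatenation operations at almost every time $s$, which requires that the exceptional set $T_\Prob$ of times in Definition~\ref{d:energy} can be handled uniformly enough to produce the almost--everywhere--in--$s$ Markov identity. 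Once these are in place, the selection yields the almost--sure Markov property as stated, and I would remark that the ``almost every $s$'' qualifier is exactly the weakening forced by the merely almost--sure (rather than genuine) energy inequality, in contrast to the strong Markov property one would obtain under uniqueness.
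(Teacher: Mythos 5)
The paper itself disposes of this theorem by citation: existence as in \cite{BarBarBesFla06}, and the second and third bullets as in Theorem 2.1 of \cite{Rom08b} and Theorem 3.6 of \cite{Rom10a}. Measured against those arguments, your first stage (Galerkin approximation, uniform energy bounds, tightness in $C([0,T];V_{-\beta})$, identification of the limit, lower semicontinuity for the almost sure energy inequality) and your third stage (Krylov selection from nonempty, convex, compact, measurably varying solution sets, stable under disintegration and reconstruction at almost every time) are faithful outlines of the cited proofs. The gap is in your second stage, and it is not a technicality.

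Your weak--strong comparison is localized by the times $\tau_\infty^{\alpha,R}$, i.e.\ by the regularity of the \emph{weak} solution: granting the chain--rule manipulations, it proves that $\xi$ coincides with $X(\cdot;x)$ on each event $\{\tau_\infty^{\alpha,R}>t\}$, that is, wherever the canonical process is a priori bounded in $V_\alpha$; taking the union over $R$ this yields at best the inequality $\tau_\infty^\alpha\leq\tau_x^\alpha$. But that is the easy inclusion. The substantive half of the second bullet is the converse: on $\{\tau_x^\alpha>t\}$, where the \emph{strong} solution is still alive, the canonical process must agree with it, so that in particular an energy martingale solution cannot lose $V_\alpha$--regularity before the strong solution blows up. A priori an energy martingale solution is only known to satisfy $Y=\xi-Z\in L^\infty_\loc(0,\infty;H)\cap L^2_\loc(0,\infty;V)$; nothing in your argument prevents it from leaving $V_\alpha$ instantly, in which case every event $\{\tau_\infty^{\alpha,R}>t\}$ is negligible, your Gronwall step is vacuous, and the claimed equality would fail with $\tau_\infty^\alpha=0<\tau_x^\alpha$. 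Closing this direction is exactly what requires the almost sure energy inequality of Definition~\ref{d:energy}: the comparison must be run in $H$ (the only space where the weak solution has any known bounds), the quantity $\|Y-Y^{\mathrm{s}}\|_H^2$ (with $Y^{\mathrm{s}}$ the strong solution's regular part) expanded as $\|Y\|_H^2-2\langle Y,Y^{\mathrm{s}}\rangle_0+\|Y^{\mathrm{s}}\|_H^2$, and the term $\|Y\|_H^2$ estimated by the energy inequality, because no chain rule applies to it: for a weak solution, $\dot Y_n$ is only controlled in $V_{-\beta}$ with $\beta>2$, not in $V_{-1}$, and the formal cancellation $\sum_n D_n\bigl(\lambda_{n-1}^\beta D_{n-1}^2-\lambda_n^\beta D_nD_{n+1}\bigr)=0$ for the difference $D$ cannot be justified when $D$ is merely in $H$. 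The cross term is then handled by testing against the regular strong solution, whose $V_\alpha$--bound up to $\tau_x^\alpha$ tames the nonlinearity; this is the scheme of Theorem 2.1 of \cite{Rom08b}. The clearest symptom of the gap is that your stage~2 never invokes the energy inequality at all, whereas that property is precisely what distinguishes energy martingale solutions from general weak martingale solutions, and is the reason the theorem can be stated for the former class only, as the paper points out just before Definition~\ref{d:energy}.
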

\begin{proof}
  The proof of the first fact can be done as in~\cite{BarBarBesFla06}.
  The proofs of the other two facts are entirely similar to those of Theorem 2.1
  of~\cite{Rom08b} and Theorem 3.6 of \cite{Rom10a}
  and we refer to these reference for further details.
\end{proof}
A natural way to prove existence of weak solution (and in fact this
is the way it is done in \cite{BarBarBesFla06}) is to use finite
dimensional approximations, namely, consider for each $N\geq1$
the solution $(X_n^{(N)})_{1\leq n\leq N}$ to the following finite
dimensional system,
\begin{equation}\label{e:galerkinx}
  \begin{cases}
    \dot X_1^{(N)} 
      = -\nu\lambda_1^2X_1^{(N)}
        - \lambda_1^\beta X_1^{(N)} X_2^{(N)}
        + \sigma_1\,dW_1,\\
    \dots,\\
    \dot X_n^{(N)}
      = - \nu\lambda_n^2X_n^{(N)}
        + \lambda_{n-1}^\beta(X_{n-1}^{(N)})^2
        - \lambda_n^\beta X_n^{(N)} X_{n+1}^{(N)}
        + \sigma_n\,dW_n,\\
    \dots,\\
    \dot X_N^{(N)}
      = - \nu\lambda_N^2X_N^{(N)}
        + \lambda_{N-1}^\beta(X_{N-1}^{(N)})^2
        + \sigma_N\,dW_N,
  \end{cases}
\end{equation}
Given $x\in H$, let $\Prob^{(N)}_x$ be the probability
distribution on $\Omega_\beta$ of the solution of the
above system with initial condition
$x^{(N)}=(x_1,x_2,\dots,x_N)$.
\begin{definition}[Galerkin martingale solution]\label{d:galerkin}
  Given $x\in H$, a \emph{Galerkin} martingale solution
  is any limit point in $\Omega_\beta$ of the sequence 
  $(\Prob^{(N)}_x)_{N\geq1}$.
\end{definition}
It is easy to verify (it is indeed the proof of existence in
Theorem~\ref{t:markov} and we refer to \cite{BarBarBesFla06}
for details in a similar problem) that \emph{Galerkin}
martingale solutions are \emph{energy} martingale solutions.
\begin{remark}
  The results stated in this section are essentially independent
  of the structure of the non--linear part and are only based
  on the fact that that the non--linearity is polynomial
  and at each mode involves only a finite number of other modes.
  
  The results of the rest of the paper are on the other hand
  strongly based on the structure of the non--linear part.
  Besides the physical motivations of \cite{FriPav04a}, from
  the point of view of studying a simplified version of Navier--Stokes,
  there are several possibilities for \emph{nearest--neighbour}
  interaction among modes. One can write \cite{KisZla05} any
  nearest--neighbour non--linearity as
  \[
    \dot X_n 
      = -\nu\lambda_n^2 X_n + (a_1 B_n^1(X) + a_2 B_n^2(X)),
  \]
  where $B_n^1$ is the one corresponding to the dyadic model and
  $B_n^2(x) = \lambda_{n+1}^\beta x_{n+1}^2 - \lambda_n^\beta x_{n-1} x_n$.
  In \cite{KisZla05} the authors notice that the inviscid problem with
  non--linearity $B_n^2$ is well--posed. Hence the dyadic model is
  the difficult, hence the most meaningful, part of any nearest--neighbour
  non--linearity.
\end{remark}
\section{Control of the negative components}\label{s:negative}

Given $\beta>2$, $\alpha\in\R$ and $c_0>0$, consider the solution
$Z$ of \eqref{e:stokes} and define the following process,
\begin{equation}\label{e:N}
  N_{\alpha,c_0}(t)
    = min\bigl\{m\geq1: |Z_n(s)|\leq c_0\nu\lambda_{n-1}^{-\alpha}
        \text{ for }s\in[0,t]\text{ and }n\geq m\bigr\},
\end{equation}
with $N_{\alpha,c_0}(t) = \infty$ if the set is empty.
\begin{lemma}[Moments of $N_{\alpha,c_0}$]\label{l:moments_N}
  Given $\beta>2$, assume \eqref{e:noise} and let $\alpha<\alpha_0+1$.
  Then for every $\gamma\in(0,\alpha_0+1-\alpha)$ and $\epsilon\in(0,1]$,
  with $\epsilon<\alpha_0+1-\alpha-\gamma$, there are two numbers
  $\cref[-1]{l:moments_N}>0$ and $\cref[-2]{l:moments_N}>0$,
  depending only on $\epsilon$, $\gamma$ and $\alpha_0$,
  such that for every $t>0$ and $n\geq1$,
  \[
    \Prob[N_{\alpha,c_0}(t) > n]
      \leq \cref[-1]{l:moments_N}
        \e^{-\cref[-2]{l:moments_N}\frac{c_0\nu}{t^\epsilon}\lambda_n^\gamma}.
  \]
  In particular,
  \[
    \E\bigl[\e^{\lambda_{N_{\alpha,c_0}(t)}^\gamma}\bigr]
      <\infty,
  \]
  and $\Prob[N_{\alpha,c_0}(t) = n] > 0$ for every $n\geq1$.
\end{lemma}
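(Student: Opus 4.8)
The plan is to collapse the infinitely many mode-wise constraints that define $N_{\alpha,c_0}$ into a single scalar bound on the stronger norm $\|\cdot\|_{\alpha+\gamma}$, and then read off the tail from the exponential moment of Lemma~\ref{l:zreg}. Since the set of admissible thresholds $m$ is increasing in $m$, unwinding the definition gives the exact identity
\[
  \{N_{\alpha,c_0}(t)>n\}
    = \bigcup_{k\geq n}
      \Bigl\{\sup_{s\in[0,t]}|Z_k(s)|>c_0\nu\lambda_{k-1}^{-\alpha}\Bigr\}.
\]

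The key step uses a gain of $\gamma$ derivatives. For $k\geq n$ the identity $\lambda_{k-1}^{-\alpha}\lambda_k^{\alpha+\gamma}=\lambda^\alpha\lambda_k^\gamma\geq\lambda^\alpha\lambda_n^\gamma$ shows that $|Z_k(s)|>c_0\nu\lambda_{k-1}^{-\alpha}$ forces $\lambda_k^{\alpha+\gamma}|Z_k(s)|>c_0\nu\lambda^\alpha\lambda_n^\gamma$, hence $\|Z(s)\|_{\alpha+\gamma}>c_0\nu\lambda^\alpha\lambda_n^\gamma$. Taking the supremum over $s$ and the union over $k\geq n$ therefore yields the single inclusion
\[
  \{N_{\alpha,c_0}(t)>n\}
    \subseteq
      \Bigl\{\sup_{[0,t]}\|Z\|_{\alpha+\gamma}>c_0\nu\lambda^\alpha\lambda_n^\gamma\Bigr\}.
\]
This is the heart of the matter: the sharply mode-dependent constraints become one sublevel set, and the decay $\lambda_n^\gamma$ is produced \emph{for free} by the regularity gain $\gamma$.

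Set $\bar\alpha=\alpha+\gamma$. Since $\gamma<\alpha_0+1-\alpha$ we have $\bar\alpha<\alpha_0+1$, and the hypothesis $\epsilon<\alpha_0+1-\alpha-\gamma=\alpha_0+1-\bar\alpha$ is exactly the condition under which Lemma~\ref{l:zreg} applies to $\bar\alpha$ with this $\epsilon$. An exponential Chebyshev inequality fed with the exponential moment of $\sup_{[0,t]}\|Z\|_{\bar\alpha}^2$ from that lemma, at level $M=c_0\nu\lambda^\alpha\lambda_n^\gamma$, gives
\[
  \Prob[N_{\alpha,c_0}(t)>n]
    \leq \cref[-1.$\epsilon$]{l:zreg}\,
      \exp\Bigl(-\frac{\cref[-2.$\epsilon$]{l:zreg}\lambda^{2\alpha}}{t^\epsilon}(c_0\nu)^2\lambda_n^{2\gamma}\Bigr).
\]
This is the asserted exponential tail; the estimate produced by this route is in fact the stronger, Gaussian-type bound with $(c_0\nu)^2\lambda_n^{2\gamma}$ in the exponent, from which the stated inequality (with $\cref[-1]{l:moments_N},\cref[-2]{l:moments_N}$ depending only on $\epsilon,\gamma,\alpha_0$) is read off using $\lambda_n^{2\gamma}\geq\lambda_n^\gamma$. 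The one delicate point is the uniformity of the prefactor as $t\downarrow0$: this is precisely why I route the estimate through Lemma~\ref{l:zreg}, whose admissible blow-up rate $t^{-\epsilon}$ (available only for $\epsilon<\alpha_0+1-\bar\alpha$) loads all the $t$-dependence into the exponent while keeping $\cref[-1.$\epsilon$]{l:zreg}$ independent of $t$.

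The two supplementary assertions are then short. Summing the tail, $\E[\e^{\lambda_{N_{\alpha,c_0}(t)}^\gamma}]=\sum_{n\geq1}\e^{\lambda_n^\gamma}\Prob[N_{\alpha,c_0}(t)=n]\leq\sum_{n\geq1}\e^{\lambda_n^\gamma}\Prob[N_{\alpha,c_0}(t)>n-1]$ converges because the Gaussian-type tail decays like $\e^{-c\lambda_n^{2\gamma}}$, which overwhelms the growth $\e^{\lambda_n^\gamma}$. For strict positivity I would use, for $n\geq2$, the decomposition
\[
  \{N_{\alpha,c_0}(t)=n\}
    = \Bigl\{\sup_{[0,t]}|Z_{n-1}|>c_0\nu\lambda_{n-2}^{-\alpha}\Bigr\}
      \cap\bigcap_{k\geq n}\Bigl\{\sup_{[0,t]}|Z_k|\leq c_0\nu\lambda_{k-1}^{-\alpha}\Bigr\}
\]
(and the intersection alone for $n=1$). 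Each event here depends on a single, distinct Brownian motion $W_k$, so by independence the probability factorises; the factor for mode $n-1$ is positive because the running supremum of the non-degenerate Ornstein--Uhlenbeck process $Z_{n-1}$ has unbounded support, and the infinite product over $k\geq n$ is positive because the per-mode tails are summable by the same estimate as above.
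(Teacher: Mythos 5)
Your proof of the tail bound is the paper's own argument: the same rewriting of $\{N_{\alpha,c_0}(t)>n\}$ as a union of mode--wise violations, the same use of the regularity gain $\gamma$ to subsume all modes $k\geq n$ into the single event $\bigl\{\sup_{[0,t]}\|Z\|_{\alpha+\gamma}>c_0\nu\lambda^{\alpha}\lambda_n^{\gamma}\bigr\}$, and the same exponential Chebyshev step through Lemma~\ref{l:zreg}; the exponential moment is then obtained in both proofs by summing the tail. The differences are in the details, and two of them are worth recording. First, you keep the Gaussian--type exponent $(c_0\nu)^2\lambda_n^{2\gamma}t^{-\epsilon}$ that Chebyshev actually yields, while the paper writes the linear form $c_0\nu\lambda_{n-1}^{\gamma}t^{-\epsilon}$ directly; your form is what makes $\E[\e^{\lambda_{N}^\gamma}]<\infty$ automatic for every $t>0$ and every $c_0$, and the residual mismatch after relaxing $\lambda_n^{2\gamma}\geq\lambda_n^{\gamma}$ (a factor $(c_0\nu)^2$ where the statement has $c_0\nu$) is a constant--bookkeeping issue that the paper's own proof shares, hence not a gap relative to it. Second, for positivity the paper treats $n=1$ by writing the probability as $\exp\bigl(-\sum_k -\log\Prob[\,\cdot\,]\bigr)$ and controlling each summand via a time--rescaled one--dimensional Ornstein--Uhlenbeck process and the absorption estimates of \cite{Dir75}, whereas you factorize the exact event $\{N_{\alpha,c_0}(t)=n\}$ and make the infinite product converge by recycling your own tail bound; this is self--contained and avoids the external OU estimate. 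Two caveats for your version: summability of the per--mode tails must be supplemented by the (standard, but necessary) remark that each individual factor $\Prob\bigl[\sup_{[0,t]}|Z_k|\leq c_0\nu\lambda_{k-1}^{-\alpha}\bigr]$ is strictly positive --- trivially so if $\sigma_k=0$, by a small--ball property of the OU process otherwise --- since summability alone does not exclude a single vanishing factor; and your failure factor at mode $n-1$ requires $\sigma_{n-1}\neq0$, without which $\Prob[N_{\alpha,c_0}(t)=n]=0$ for that $n$. The latter is an implicit assumption of the statement itself, equally hidden in the paper's ``all other cases follow similarly'', so it is a shared caveat rather than a defect of your argument.
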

\begin{proof}
  For $n\geq 1$,
  \[
    \{N_{\alpha,c_0}(t)\leq n\}
      = \Bigl\{\sup_{k\geq n}\sup_{[0,t]}\lambda_{k-1}^\alpha|Z_k(s)|\leq c_0\nu\Bigr\},
  \]
  hence if $\gamma<\alpha_0+1-\alpha$ and $k\geq n$,
  \[
    \sup_{[0,t]}\lambda_{k-1}^\alpha|Z_k(s)|
      \leq \lambda_{n-1}^{-\gamma}\sup_{[0,t]}\|Z(s)\|_{\alpha+\gamma},
  \]
  and therefore by Chebychev's inequality and Lemma~\ref{l:zreg},
  \[
    \Prob[N_{\alpha,c_0}(t)>n]
      \leq \Prob\Bigl[\sup_{[0,t]}\|Z(s)\|_{\alpha+\gamma} > c_0\nu\lambda_{n-1}^\gamma\Bigr]
      \leq \cref[-1.$\epsilon$]{l:zreg}\e^{-\cref[-2.$\epsilon$]{l:zreg}\frac{c_0\nu}{t^\epsilon}\lambda_{n-1}^\gamma},
  \]
  for every $\epsilon\in(0,1]$ with $\epsilon<\alpha_0+1-\alpha-\gamma$.
  The double--exponential moment follows easily from this estimate.

  We finally prove that $\Prob[N_{\alpha,c_0}(t)=n]>0$. We prove it for $n=1$, all
  other cases follow similarly. By independence,
  \[
    \Prob[N_{\alpha,c_0}(t)=1]
      = \exp\Bigl(- \sum_{n=1}^\infty-\log\Prob\bigl[\sup_{[0,t]}\lambda_{k-1}^\alpha|Z_k(s)|\leq c_0\nu\bigr]\Bigr)
  \]
  and it is sufficient to show that the series above is convergent. By \eqref{e:noise},
  \[
    \Prob\bigl[\sup_{[0,t]}\lambda_{k-1}^\alpha|Z_k(s)|\leq c_0\nu\bigr]
      \geq \Prob\bigl[\sup_{[0,t]}|\zeta(\lambda_n^2s)|\leq 2^\alpha c_0\nu\lambda_n^{\alpha_0+1-\alpha}\bigr],
  \]
  where $\zeta$ is the solution of the one--dimensional SDE $d\zeta + \nu\zeta\,dt = dW$,
  with $\zeta(0) = 0$, and the conclusion follows by the fact that $\alpha<1+\alpha_0$
  and standard tail estimates on the one--dimensional Ornstein--Uhlenbeck process
  (see for instance~\cite{Dir75}).
\end{proof}
The lemma below is the crucial result of the paper. To formulate its statement,
we introduce the finite dimensional approximations of the problem. Consider
problem \eqref{e:v} and, for an integer $N\geq1$, the finite dimensional
approximations of \eqref{e:v},
\begin{equation}\label{e:galerkin}
  \begin{cases}
    \dot Y_1^{(N)} = -\nu\lambda_1^2Y_1^{(N)} - \lambda_1^\beta X_1^{(N)} X_2^{(N)},\\
    \dots,\\
    \dot Y_n^{(N)} = -\nu\lambda_n^2Y_n^{(N)} +\lambda_{n-1}^\beta(X_{n-1}^{(N)})^2 - \lambda_n^\beta X_n^{(N)} X_{n+1}^{(N)},\\
    \dots,\\
    \dot Y_N^{(N)} = -\nu\lambda_N^2Y_N^{(N)} +\lambda_{N-1}^\beta(X_{N-1}^{(N)})^2,
  \end{cases}
\end{equation}
where for $n=1,\dots,N$ we have set $X_n^{(N)} = Y_n^{(N)} + Z_n$. It is easy
to verify that the above SDE admits a unique global solution.
\begin{lemma}[Main lemma]\label{l:crucial}
  Let $\beta>2$ and assume \eqref{e:noise}, \eqref{e:noise2}.
  Let $\alpha\in[\beta-2,1+\alpha_0)$ and consider $C_0>0$,
  $a_0>0$ and $n_0\geq1$ such that
  \begin{equation}\label{e:crucial}
    c_0 \leq a_0
      \qquad\text{and}\qquad
    c_0 < \sqrt{a_0}\bigl(\lambda_{n_0}^{\frac12(\alpha+2-\beta)}-\sqrt{a_0}\bigr).
  \end{equation}
  Given $T>0$, let $N\geq1$ and assume that
  $\lambda_{n-1}^\alpha X_n^{(N)}(0)\geq - a_0\nu$ for all $n=n_0,\dots,N$.
  
  If $N > N_{\alpha,c_0}(T)$, then $Y_n^{(N)}(t)\geq -a_0\nu\lambda_{n-1}^{-\alpha}$
  for all $t\in[0,T]$ and all $n\geq n_0\vee N_{\alpha,c_0}(T)$.
\end{lemma}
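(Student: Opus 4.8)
The plan is to run a comparison (barrier) argument directly on the finite--dimensional system \eqref{e:galerkin}, exploiting that for $n\geq N_{\alpha,c_0}(T)$ the Ornstein--Uhlenbeck component is controlled, $|Z_n(s)|\leq c_0\nu\lambda_{n-1}^{-\alpha}$ on $[0,T]$, by the very definition \eqref{e:N} of $N_{\alpha,c_0}$. Write $\bar n = n_0\vee N_{\alpha,c_0}(T)$ and introduce the barrier levels $b_n = -a_0\nu\lambda_{n-1}^{-\alpha}$, so the claim is precisely $Y_n^{(N)}(t)\geq b_n$ for all $t\in[0,T]$ and all $\bar n\leq n\leq N$. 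Since $Z_n(0)=0$, the hypothesis $\lambda_{n-1}^\alpha X_n^{(N)}(0)\geq -a_0\nu$ reads $Y_n^{(N)}(0)=X_n^{(N)}(0)\geq b_n$ for $n\geq n_0$, so the barrier holds at $t=0$ over the whole band. As the right--hand side of \eqref{e:galerkin} is smooth, $Y^{(N)}$ is $C^1$ in time and the first--exit argument below is legitimate.

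Suppose the conclusion fails and set $t_*=\inf\{t\in[0,T]:Y_n^{(N)}(t)<b_n\text{ for some }\bar n\leq n\leq N\}$. By continuity every $Y_n^{(N)}(t_*)\geq b_n$ in the band, and there is an index $n^*$ with $Y_{n^*}^{(N)}(t_*)=b_{n^*}$ and $\dot Y_{n^*}^{(N)}(t_*)\leq 0$ (the component must dip below $b_{n^*}$ immediately after $t_*$, or, if $t_*=0$, it starts at the barrier and decreases). I would reach a contradiction by showing $\dot Y_{n^*}^{(N)}(t_*)>0$. At $(t_*,n^*)$ the dissipative term of \eqref{e:galerkin} equals $-\nu\lambda_{n^*}^2 b_{n^*}=a_0\nu^2\lambda_{n^*}^2\lambda_{n^*-1}^{-\alpha}>0$; the incoming square $\lambda_{n^*-1}^\beta (X_{n^*-1}^{(N)})^2$ is nonnegative (and simply absent if $n^*=1$), so I drop it; and the only dangerous term is the outgoing product $-\lambda_{n^*}^\beta X_{n^*}^{(N)}X_{n^*+1}^{(N)}$ (absent if $n^*=N$, in which case positivity of $\dot Y_{n^*}^{(N)}(t_*)$ is immediate). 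This structural asymmetry --- an incoming square that helps and a single outgoing product to control --- is why the lower bound propagates \emph{downward} from high modes and why the argument must be run simultaneously on the whole band.

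For the dangerous term I use $n^*,\,n^*+1\geq\bar n\geq N_{\alpha,c_0}(T)$ together with \eqref{e:N}, the touching identity $Y_{n^*}^{(N)}(t_*)=b_{n^*}$, and the still--valid neighbour barrier $Y_{n^*+1}^{(N)}(t_*)\geq b_{n^*+1}$, to get $X_{n^*}^{(N)}(t_*)\in[-(a_0+c_0)\nu\lambda_{n^*-1}^{-\alpha},\,-(a_0-c_0)\nu\lambda_{n^*-1}^{-\alpha}]$ and $X_{n^*+1}^{(N)}(t_*)\geq -(a_0+c_0)\nu\lambda_{n^*}^{-\alpha}$. Here the first hypothesis $c_0\leq a_0$ in \eqref{e:crucial} is exactly what forces $X_{n^*}^{(N)}(t_*)\leq 0$, so that $-\lambda_{n^*}^\beta X_{n^*}^{(N)}X_{n^*+1}^{(N)}=\lambda_{n^*}^\beta|X_{n^*}^{(N)}|X_{n^*+1}^{(N)}\geq -(a_0+c_0)^2\nu^2\lambda_{n^*}^{\beta-\alpha}\lambda_{n^*-1}^{-\alpha}$. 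Collecting the surviving terms gives $\dot Y_{n^*}^{(N)}(t_*)\geq\nu^2\lambda_{n^*-1}^{-\alpha}\lambda_{n^*}^{\beta-\alpha}\bigl(a_0\lambda_{n^*}^{\alpha+2-\beta}-(a_0+c_0)^2\bigr)$, and since $\alpha+2-\beta\geq0$ and $n^*\geq n_0$ we have $\lambda_{n^*}^{\alpha+2-\beta}\geq\lambda_{n_0}^{\alpha+2-\beta}$, so the bracket is bounded below by $a_0\lambda_{n_0}^{\alpha+2-\beta}-(a_0+c_0)^2$, which is strictly positive: it is precisely the square of the second inequality in \eqref{e:crucial}. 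This yields $\dot Y_{n^*}^{(N)}(t_*)>0$, the desired contradiction. I expect the only genuine care to lie in the simultaneous first--exit bookkeeping --- verifying that at the touching index one may still invoke both the neighbour barrier at $n^*+1$ and the $Z$--control from \eqref{e:N}, and handling the boundary modes $n^*=1$ and $n^*=N$ --- rather than in the estimates themselves, which reduce cleanly to the algebraic condition \eqref{e:crucial}.
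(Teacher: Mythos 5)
Your proposal is correct and follows essentially the same route as the paper: a first--touching--time barrier argument on the finite--dimensional system, dropping the nonnegative incoming square, controlling the outgoing product $-\lambda_n^\beta X_nX_{n+1}$ via the $Z$--bound from \eqref{e:N} and the still--valid neighbour barrier, and reducing positivity of $\dot Y_{n^*}$ at the touching time to the algebraic condition \eqref{e:crucial}. The only (immaterial) difference is bookkeeping: the paper first assumes strict inequality at $t=0$ and recovers equality by continuity, whereas you handle a possible touch at $t=0$ directly through the non--positive difference quotient at the first--exit time.
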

\begin{proof}
  For simplicity we drop in this proof the superscript ${}^{(N)}$.
  We can first assume that $\lambda_{n-1}^\alpha Y_n(0)>-\nu a_0$ for
  $n\geq n_0,\dots,N$ (the equality will be included by continuity),
  then the same is true in a neighbourhood of $t=0$. Let $t_0>0$ be
  the first time when at least for one $n$,
  $\lambda_{n-1}^\alpha Y_n(t_0) = -\nu a_0$. Let
  $n\geq n_0\vee N_{\alpha,c_0}(T)$ be one of such indices, then
  \[
    \begin{aligned}
      \dot Y_n(t_0)
        &\geq -\nu\lambda_n^2 Y_n(t_0)
              - \lambda_n^\beta(Y_n(t_0)+Z_n(t_0))(Y_{n+1}(t_0)+Z_{n+1}(t_0))\\
        &\geq a_0\nu^2\lambda^2\lambda_{n-1}^{2-\alpha}
              + \lambda_n^\beta\bigl(a_0\nu\lambda_{n-1}^{-\alpha}-Z_n(t_0)\bigr)(Y_{n+1}(t_0)+Z_{n+1}(t_0))\\
        &\geq a_0\nu^2\lambda^2\lambda_{n-1}^{2-\alpha}
              - \lambda_n^\beta\bigl(a_0\nu\lambda_{n-1}^{-\alpha}-Z_n(t_0)\bigr)(Y_{n+1}(t_0)+Z_{n+1}(t_0))_-
    \end{aligned}
  \]
  since $\nu a_0\lambda_{n-1}^{-\alpha}-Z_n(t_0)\geq0$ for $n\geq N_{\alpha,c_0}(T)$,
  and where $x_- = \max(-x,0)$. We also know that
  $Y_{n+1}(t_0)\geq -a_0\nu\lambda_n^{-\alpha}$, hence
  $Y_{n+1}(t_0)+Z_{n+1}(t_0)\geq -\nu(a_0+c_0)\lambda_n^{-\alpha}$ and so
  $(Y_{n+1}(t_0)+Z_{n+1}(t_0))_-\leq \nu(a_0+c_0)\lambda_n^{-\alpha}$.
  We also have
  $a_0\nu\lambda_{n-1}^{-\alpha}-Z_n(t_0)\leq \nu(a_0+c_0)\lambda_{n-1}^{-\alpha}$,
  so in conclusion
  \[
    \dot Y_n(t_0)
      \geq \nu^2\lambda^2\lambda_{n-1}^{2-\alpha}
        \bigl(a_0 - \lambda_n^{\beta-2-\alpha}(a_0+c_0)^2\bigr)
      >0
  \]
  and the lemma is proved.
\end{proof}
The next theorem is based on Lemma~\ref{l:crucial} and shows
that the process can explode \emph{only} in the positive area.
\begin{theorem}
  Given $\beta>2$, assume \eqref{e:noise} and \eqref{e:noise2}.
  Let $\alpha\in(\beta-2,\alpha_0+1)$ and $x\in V_\alpha$, and
  let $(X(\cdot;x),\tau_x^\alpha)$ be the strong solution in $V_\alpha$
  with initial condition $x$. Then for every $T>0$ and $p\geq1$,
  \[
    \E\Bigl[\sup_{n\geq1}\sup_{t\in[0,T\wedge\tau_x^\alpha]}
        \bigl(\lambda_{n-1}^\alpha\bigl(X_n(t)\bigr)_-\bigr)^p\Bigr]
      <\infty.
  \]
  In particular,
  \[
    \inf_{n\geq1}\inf_{t\in [0,\tau_x^\alpha\wedge T]}\lambda_{n-1}^\alpha X_n
      > -\infty,
      \qquad \Prob\text{--a.~s.}
  \]
\end{theorem}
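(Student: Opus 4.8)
The plan is to split the supremum over modes into \emph{high} modes, where Lemma~\ref{l:crucial} furnishes a deterministic lower bound, and the randomly many \emph{low} modes, which I control by the energy of the solution in $H$ together with the double--exponential moments of $N_{\alpha,c_0}$ provided by Lemma~\ref{l:moments_N}. Throughout I use the decomposition $X=Y+Z$, with $Z$ the Ornstein--Uhlenbeck process of~\eqref{e:stokes} and $Y$ the solution of~\eqref{e:v}. First I fix the parameters: since $\alpha>\beta-2$ the exponent $\tfrac12(\alpha+2-\beta)$ is positive, so I may take $c_0=a_0$ for some small $a_0>0$ and then choose $n_0$ so large that~\eqref{e:crucial} holds and, simultaneously, $\lambda_{n-1}^\alpha x_n\geq-a_0\nu$ for all $n\geq n_0$; the latter is possible because $x\in V_\alpha$ forces $\lambda_{n-1}^\alpha x_n\to0$. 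With these choices $n_0$ is deterministic and depends only on $x$, $a_0$ and $\alpha$.

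For the high modes I apply Lemma~\ref{l:crucial} to the Galerkin system~\eqref{e:galerkin} and let $N\to\infty$: since $N_{\alpha,c_0}(T)<\infty$ almost surely, for all large $N$ the lemma gives $Y_n^{(N)}(t)\geq-a_0\nu\lambda_{n-1}^{-\alpha}$ on $[0,T]$ for every $n\geq n_0\vee N_{\alpha,c_0}(T)$. Passing to the limit (Galerkin martingale solutions are energy martingale solutions, which by Theorem~\ref{t:markov} coincide with the strong solution up to $\tau_x^\alpha$, and the inequality is closed and hence survives), and adding the bound $Z_n\geq-c_0\nu\lambda_{n-1}^{-\alpha}$, valid for $n\geq N_{\alpha,c_0}(T)$ by the very definition~\eqref{e:N}, I obtain on $[0,T\wedge\tau_x^\alpha]$ the \emph{deterministic} estimate $\lambda_{n-1}^\alpha\bigl(X_n(t)\bigr)_-\leq(a_0+c_0)\nu$ for every $n\geq n_0\vee N_{\alpha,c_0}(T)$, whose $p$--th power is trivially integrable.

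There remain the low modes $n<n_0\vee N_{\alpha,c_0}(T)$, finitely but randomly many. Here I bound crudely $\lambda_{n-1}^\alpha\bigl(X_n(t)\bigr)_-\leq\lambda_{n-1}^\alpha\|X(t)\|_H\leq\bigl(\lambda_{n_0}^\alpha+\lambda_{N_{\alpha,c_0}(T)}^\alpha\bigr)\sup_{[0,T\wedge\tau_x^\alpha]}\|X\|_H$, using $\alpha>0$ so that $\lambda_\cdot^\alpha$ is increasing. Raising to the power $p$ and applying Cauchy--Schwarz splits the expectation into a factor $\E\bigl[(\lambda_{n_0}^\alpha+\lambda_{N_{\alpha,c_0}(T)}^\alpha)^{2p}\bigr]$, which is finite because the double--exponential moment of Lemma~\ref{l:moments_N} dominates any power $\lambda_N^{q}=e^{q(\log\lambda)N}$, and a factor $\E\bigl[\sup_{[0,T\wedge\tau_x^\alpha]}\|X\|_H^{2p}\bigr]$. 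The latter is the content of the energy estimate: writing $X=Y+Z$, the moments of $\sup\|Z\|_\alpha$ are controlled by Lemma~\ref{l:zreg}, while $\|Y\|_H$ is controlled through the energy balance $\energy_t(Y,Z)\leq\|x\|_H^2$ read off Definition~\ref{d:energy}, in which the non--linear cross term is absorbed into $\nu\int_0^t\|Y\|_V^2\,ds$ at the cost of powers of $\|Z\|_\alpha$. Combining the two regimes, the supremum over all $n$ is the maximum of a bounded quantity and a term with finite $p$--th moment, which proves the displayed bound; the ``in particular'' then follows at once, since finiteness of even the first moment forces $\sup_n\sup_t\lambda_{n-1}^\alpha(X_n)_-<\infty$ almost surely and $\lambda_{n-1}^\alpha X_n\geq-\lambda_{n-1}^\alpha(X_n)_-$.

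The main obstacle I expect is the low--mode term, and within it the energy estimate $\E[\sup_{[0,T\wedge\tau_x^\alpha]}\|X\|_H^{2p}]<\infty$ up to the blow--up time: the delicate point is that the $\lambda_n^\beta$--weighted cross term must be dominated by the dissipation $\nu\int\|Y\|_V^2$, and this absorption is precisely what Assumption~\eqref{e:noise2} (namely $\alpha_0>\beta-3$ and $\alpha_0>\tfrac12(\beta-3)$) is designed to guarantee, since it places $Z$ in a space $V_\alpha$ with $\alpha>\beta-3$. A secondary, more structural subtlety is the transfer of the Galerkin bound of Lemma~\ref{l:crucial} to the strong solution, for which I rely on the identification of the two notions of solution up to $\tau_x^\alpha$ furnished by Theorem~\ref{t:markov}.
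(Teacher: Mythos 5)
Your architecture is exactly the paper's: fix $a_0,c_0$ so that \eqref{e:crucial} holds, choose $n_0$ from the decay of the initial condition, apply Lemma~\ref{l:crucial} to the Galerkin systems and pass to the limit, identifying the limit with the strong solution up to $\tau_x^\alpha$; then split the modes at $n_0\vee N_{\alpha,c_0}(T)$, bound the finitely many low modes by $\lambda^{\alpha}_{n_0\vee N_{\alpha,c_0}(T)}\sup_{[0,T\wedge\tau_x^\alpha]}\|X\|_H$, and conclude by Cauchy--Schwarz combined with the double--exponential moments of Lemma~\ref{l:moments_N}. Up to the choice of constants ($c_0=a_0$ small with $n_0$ large, versus the paper's $a_0=\tfrac14$, $c_0=\tfrac16$ which work for every $n_0$), this is the paper's proof.

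The gap is in the one ingredient you try to prove and the paper only quotes, namely $\E\bigl[\sup_{[0,T\wedge\tau_x^\alpha]}\|X\|_H^{2p}\bigr]<\infty$. You propose to read this off $\energy_t(Y,Z)\leq\|x\|_H^2$ by absorbing the cross term $2\int_0^t\sum_n\lambda_n^\beta(y_n+z_n)(y_{n+1}z_n-y_nz_{n+1})\,ds$ into the dissipation ``at the cost of powers of $\|Z\|_\alpha$''. This cannot deliver moments in the whole range permitted by \eqref{e:noise2}. The quadratic-in-$y$ part of the cross term is only bounded by $C\|Z\|_{\alpha'}\sum_n\lambda_n^{w}y_n^2$ with $w=\beta-\alpha'$ and $\alpha'<\alpha_0+1$, and interpolation plus Young turn this into $\delta\|y\|_1^2+C_\delta\|Z\|_{\alpha'}^{2/(2-w)}\|y\|_H^2$, so Gronwall produces the factor $\exp\bigl(C\int_0^T\|Z\|_{\alpha'}^{2/(2-w)}\,ds\bigr)$. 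Assumption \eqref{e:noise2} only guarantees $\alpha_0+1>\beta-2$, so one may be forced to take $w>1$ (for instance $\beta=2.9$, $\alpha_0=0$ is admissible, forcing $w>1.9$); then $2/(2-w)>2$, and since Lemma~\ref{l:zreg} gives only square-exponential (Gaussian-type) moments for $\sup_{[0,T]}\|Z\|_{\alpha'}$, the expectation of that exponential factor is infinite as soon as the noise is non-degenerate. Your argument then yields almost-sure finiteness of $\sup\|X\|_H$ --- enough for the ``in particular'' claim --- but not the $L^{2p}$ bound that your Cauchy--Schwarz step requires. The correct route, and the reason the paper can treat the fact as standard, avoids the $Y$--$Z$ decomposition altogether: for the Galerkin (or cut-off) approximations the nonlinearity is exactly orthogonal to the solution in $H$, $\sum_n X_n\bigl(\lambda_{n-1}^\beta X_{n-1}^2-\lambda_n^\beta X_nX_{n+1}\bigr)=0$, so It\^o's formula for $\|X\|_H^2$ contains no nonlinear term at all; Burkholder--Davis--Gundy and Gronwall give $\E\bigl[\sup_{[0,T]}\|X^{(N)}\|_H^{2p}\bigr]\leq C(T,p,\|x\|_H,\sigma)$ uniformly in the approximation, and Fatou transfers the bound to the strong solution up to $\tau_x^\alpha$. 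With that replacement your proof closes and coincides with the paper's.
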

\begin{proof}
  Fix $x\in V_\alpha$ and $T>0$, and set $a_0=\tfrac14$ and $c_0=\tfrac16$,
  so that condition~\eqref{e:crucial} holds for any $n_0$. Next, choose $n_0\geq1$
  as the smallest integer such that $\lambda_{n-1}^\alpha x_n\geq -\frac14\nu$
  for all $n\geq n_0$. With the choice $c_0=\tfrac16$, set
  $\mathcal{Z}_{\alpha,T} = \{N_{\alpha,\frac16}(T)<\infty\}$, which by Lemma
  \ref{l:moments_N} is an event of probability one.
  
  Lemma~\ref{l:crucial} implies that on $\{\tau_x^\alpha>T\}$,
  \[
    Y_n(t)\geq -\frac14\nu\lambda_{n-1}^{-\alpha},
      \qquad\text{for }n\geq n_0\vee N_{\alpha,\frac16}(T).
  \]
  Indeed, we can set $x^{(N)}=(x_1,\dots,x_N)$ and notice that
  on the event $\{\tau_x^\alpha>T\}$, problem \eqref{e:v} has
  a unique solution, hence for every $N$ the solution of \eqref{e:galerkin}
  with initial condition $x^{(N)}$ converges to the solution of
  \eqref{e:v} with initial condition $x$ (where the convergence
  is component--wise uniform in time on $[0,T]$).

  Let $N_1 = n_0\vee N_{\alpha,\frac16}(T)$, it is clear that
  $N_1$ has the same finite moments of $N_{\alpha,\frac16}(T)$,
  moreover on $\{\tau_x^\alpha>T\}$,
  \[
    \lambda_{n-1}^\alpha X_n(t)
      \geq
        \begin{cases}
          -\lambda_{N_1-1}^\alpha \sup_{t\in[0,T]}\|X(t)\|_H,
            &\qquad n<N_1,\\
          -\frac5{12}\nu
            &\qquad n\geq N_1,
        \end{cases}
  \]
  for every $n\geq1$, and so
  \[
    \sup_{n\geq1}\sup_{t\in[0,T]}\lambda_{n-1}^\alpha\bigl(X_n(t)\bigr)_-
      \leq \frac5{12}\nu + \lambda_{N_1-1}^\alpha\sup_{t\in[0,T]}\|X(t)\|_H
  \]
  From Lemma \ref{l:moments_N} and the fact that $\E[\sup_{[0,T]}\|X(t)\|_H^p]$
  is finite for every $p\geq1$, the estimate in the statement of the theorem
  readily follows.
\end{proof}
\begin{remark}
  Given an initial condition $x\in V_\alpha$, if we set
  \[
    \tau_{x,\pm}^\alpha
      = \sup\{t: sup_{n\geq1}\lambda_n^\alpha(X_n)_\pm<\infty\},
  \]
  then $\tau_x^\alpha = \min(\tau_{x,+}^\alpha,\tau_{x,-}^\alpha)$,
  and the previous theorem essentially states that
  $\tau_x^\alpha=\tau_{x,+}^\alpha$.
\end{remark}
\begin{corollary}
  Given $\beta>2$, assume \eqref{e:noise} and \eqref{e:noise2}.
  Given $\alpha\in(\beta-2,\alpha_0+1)$ and $x\in V_\alpha$,
  assume additionally either that problem \eqref{e:v}, with
  initial condition $x$, admits a unique solution, for almost every
  possible value assumed by $Z$, or that we are dealing with
  a \emph{Galerkin} solution starting in $x$.
  Then for every $T>0$ and $p\geq1$,
  \[
    \E\Bigl[\sup_{n\geq1}\sup_{t\in[0,T]}
        \bigl(\lambda_{n-1}^\alpha\bigl(X_n(t)\bigr)_-\bigr)^p\Bigr]
      <\infty.
  \]
  In particular,
  \[
    \inf_{n\geq1}\inf_{t\in [0,T]}\lambda_{n-1}^\alpha X_n
      > -\infty,
      \qquad \Prob\text{--a.~s.}
  \]
\end{corollary}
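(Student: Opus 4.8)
The plan is to re-run the proof of the preceding theorem essentially verbatim; the only---and decisive---change is that the two additional hypotheses allow me to drop the restriction to the event $\{\tau_x^\alpha>T\}$. In the theorem that restriction served a single purpose: on $\{\tau_x^\alpha>T\}$ problem~\eqref{e:v} has a unique solution, and this uniqueness was what forced the Galerkin approximations $Y^{(N)}$ to converge (component--wise, uniformly on $[0,T]$) to $Y$. Each hypothesis of the corollary secures exactly this convergence on the whole probability space: if \eqref{e:v} admits a unique solution for almost every realisation of $Z$, then uniqueness pins down the limit of the full sequence $Y^{(N)}$; and for a \emph{Galerkin} solution the convergence of (a subsequence of) the approximations holds by definition, after passing, if necessary, to a Skorokhod representation so as to replace weak convergence of laws by almost sure convergence in $\Omega_\beta$.

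Granted this convergence, I would proceed as before. First I set $a_0=\tfrac14$, $c_0=\tfrac16$, so that~\eqref{e:crucial} holds for every $n_0$, and I choose $n_0$ to be the smallest integer with $\lambda_{n-1}^\alpha x_n\geq-\tfrac14\nu$ for all $n\geq n_0$, which is possible since $x\in V_\alpha$. Because $N_{\alpha,1/6}(T)<\infty$ almost surely by Lemma~\ref{l:moments_N}, for almost every trajectory every sufficiently large $N$ satisfies $N>N_{\alpha,1/6}(T)$, so Lemma~\ref{l:crucial} gives
\[
  Y_n^{(N)}(t)\geq-\tfrac14\nu\lambda_{n-1}^{-\alpha},
  \qquad t\in[0,T],\ n\geq N_1:=n_0\vee N_{\alpha,1/6}(T).
\]
Since a lower bound on a fixed coordinate is a closed condition, it survives the limit $N\to\infty$, yielding $Y_n(t)\geq-\tfrac14\nu\lambda_{n-1}^{-\alpha}$ and hence $\lambda_{n-1}^\alpha X_n(t)\geq-\tfrac5{12}\nu$ for $n\geq N_1$, where I have used $|Z_n|\leq c_0\nu\lambda_{n-1}^{-\alpha}$ for $n\geq N_{\alpha,c_0}(T)$.

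For the finitely many indices $n<N_1$ I would bound directly by the energy norm, $\lambda_{n-1}^\alpha X_n(t)\geq-\lambda_{N_1-1}^\alpha\sup_{[0,T]}\|X(t)\|_H$, exactly as in the theorem, so that
\[
  \sup_{n\geq1}\sup_{t\in[0,T]}\lambda_{n-1}^\alpha\bigl(X_n(t)\bigr)_-
    \leq \tfrac5{12}\nu+\lambda_{N_1-1}^\alpha\sup_{[0,T]}\|X(t)\|_H.
\]
The moment bound then follows from the double--exponential moment of $N_1$ supplied by Lemma~\ref{l:moments_N} (since $N_1$ has the same tail as $N_{\alpha,1/6}(T)$) together with the finiteness of $\E[\sup_{[0,T]}\|X(t)\|_H^p]$, which holds because any energy---and hence any \emph{Galerkin}---martingale solution lies in $L^\infty(0,T;H)$ with moments of every order.

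I expect the main obstacle to be the transfer of the coordinate--wise lower bound of Lemma~\ref{l:crucial}, a priori available only for the finite--dimensional approximations, to the limiting law. The delicate point is that the threshold $N_1$ is itself random through $N_{\alpha,1/6}(T)$, a functional of $Z$ rather than of the truncated solution; the resolution is that $Z$ solves the linear system~\eqref{e:stokes}, so it is the same for every truncation $N$ and is reconstructed identically on the limit space from the martingale part of the canonical process, whence $N_{\alpha,1/6}(T)$ is preserved in the limit. Once the convergence is realised almost surely via Skorokhod, the closedness of the events $\{\lambda_{n-1}^\alpha\xi_n(t)\geq-\tfrac5{12}\nu\}$ in $\Omega_\beta$ finishes the argument.
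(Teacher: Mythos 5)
Your proposal is correct and follows essentially the same route as the paper: the paper's own proof of the corollary consists precisely of the observation that the event $\{\tau_x^\alpha>T\}$ entered the theorem's proof only to guarantee that problem~\eqref{e:v} has a unique solution (hence that the Galerkin approximations converge component--wise uniformly on $[0,T]$), and that each of the corollary's hypotheses supplies this convergence directly --- by uniqueness for the full sequence, or by definition (up to a subsequence, via Skorokhod) for a \emph{Galerkin} solution. Your expanded re-run of the theorem's argument, including the treatment of the random threshold $N_1$ and the closedness of the coordinate-wise lower bounds under the limit, matches the paper's intent in detail.
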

\begin{proof}
  We simply notice that in the proof of the theorem above we have used
  the piece of information $\tau_x^\alpha>T$ only to ensure that
  problem \eqref{e:v} admits a unique solution.
  
  On the other hand, if we are dealing with a Galerkin solution, then
  up to a sub--sequence we still have component--wise uniform convergence
  in time.
\end{proof}
\section{Uniqueness and regularity for \texorpdfstring{$2<\beta\leq\tfrac52$}{2<beta<=5/2}}\label{s:unique}

In this section we prove two results, the first concerning path--wise uniqueness, the 
second concerning regularity (absence of blow--up), which are essentially extensions
of the corresponding results for the case without noise. The extension is made
possible by means of the control of negative components shown in Section
\ref{s:negative}. The path--wise uniqueness result below is restricted
to \emph{Galerkin} solutions (see Definition~\ref{d:galerkin}).
\begin{theorem}[Path--wise uniqueness]\label{t:unique}
  Let $\beta\in(2,3]$ and assume that \eqref{e:noise}, \eqref{e:noise2} hold.
  Let $X(0)\in V_{\beta-2}$, then there exists a (path--wise) unique
  solution of \eqref{e:dyadic} with initial condition $X(0)$, in the
  class of \emph{Galerkin} martingale solutions.
\end{theorem}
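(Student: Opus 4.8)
The plan is to prove path--wise uniqueness by comparing two \emph{Galerkin} martingale solutions $X$ and $X'$ that share the same driving noise (equivalently, the same $Z$ from \eqref{e:stokes}) and the same initial condition $X(0)=X'(0)\in V_{\beta-2}$, and to show that their difference vanishes. Writing $Y=\xi-Z$ and $Y'=\xi'-Z$ for the two solutions, the difference $D=Y-Y'$ solves a deterministic (random-coefficient) system obtained by subtracting two copies of \eqref{e:v}, with $D(0)=0$. The natural quantity to control is $\|D(t)\|_H^2$, or rather a suitably weighted version $\|D(t)\|_{\beta-2}^2$, so I would take $\alpha=\beta-2$ throughout, matching the regularity class $V_{\beta-2}$ of the initial data and the threshold from Theorem~\ref{t:strong}.

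The key structural point is that the nonlinearity of the dyadic model is quadratic with a telescoping/orthogonality property, so when I differentiate $\tfrac12\|D\|_H^2$ the contribution of the nonlinear terms will not be estimated crudely but will exploit cancellation. First I would expand $\frac{d}{dt}\tfrac12\|D\|_H^2$, isolating the dissipative term $-\nu\|D\|_V^2$ (which has a favourable sign) and the nonlinear remainder. The nonlinear remainder is bilinear in $D$ and the solutions, and after using the orthogonality structure it should reduce to a sum of terms of the form $\lambda_n^\beta D_n D_{n+1}(\text{bounded factor})$, where the bounded factor involves $X_n, X_n', Z_n$. Here the decisive input is the control of negative components from Section~\ref{s:negative}: since $\beta\leq 3$, the quasi--positivity lower bound $\lambda_{n-1}^\alpha X_n\geq -a_0\nu$ (and the analogous bound for $X'$) holds for $n$ large, and together with the a-priori $H$-bound for Galerkin solutions this gives the uniform pointwise control on the coefficients that is needed to absorb the nonlinear remainder into the dissipation plus a Gronwall term. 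Concretely, I expect an estimate of the shape $\frac{d}{dt}\tfrac12\|D\|_H^2 \leq -\nu\|D\|_V^2 + C(t)\|D\|_V\|D\|_H + (\text{l.o.t.})$, where $C(t)$ depends on $\sup_n\lambda_{n-1}^\alpha|X_n|$ type quantities that are finite a.s. by the results of Section~\ref{s:negative}.

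The main obstacle is exactly this absorption step: the factor $\lambda_n^{\beta}$ in the nonlinearity is supercritical relative to the dissipation $\nu\lambda_n^2$ unless one gains decay of size $\lambda_n^{\beta-2}$ from the solution, and this is precisely the borderline gain that quasi--positivity provides only for $\beta\leq 3$ and only for large modes. Thus I would split the mode sum into a finite head $n<n_0\vee N_{\alpha,c_0}(T)$ and a tail $n\geq n_0\vee N_{\alpha,c_0}(T)$. On the tail I apply Lemma~\ref{l:crucial} to both solutions to bound the coefficients, making the nonlinear terms controllable by $\nu\|D\|_V^2$ up to a constant times $\|D\|_H^2$; on the finite head the system is effectively finite-dimensional with locally Lipschitz coefficients (using the $L^\infty_\loc(0,\infty;H)\cap L^2_\loc(0,\infty;V)$ regularity of energy solutions), so those terms contribute only a locally bounded multiple of $\|D\|_H^2$.

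Finally, I would combine the two regimes into a single differential inequality $\frac{d}{dt}\|D\|_H^2 \leq K(t)\|D\|_H^2$ with $\int_0^T K(t)\,dt<\infty$ almost surely (the integrability coming from Lemma~\ref{l:moments_N} and the finite moments of $\sup_{[0,T]}\|X\|_H$), and conclude by Gronwall's lemma that $\|D(t)\|_H^2=0$ for all $t\in[0,T]$, hence $Y=Y'$ and $\xi=\xi'$. Since $T>0$ is arbitrary and the random time $N_{\alpha,c_0}(T)$ is finite a.s., this yields path--wise uniqueness on $[0,\infty)$ within the class of \emph{Galerkin} martingale solutions. The restriction to Galerkin solutions enters because the quasi--positivity bound of Section~\ref{s:negative} was established for the finite-dimensional approximations and passed to the limit precisely along Galerkin sequences, and because one needs both competing solutions to be of this type so that the comparison of coefficients in the tail is legitimate.
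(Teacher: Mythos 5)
Your overall skeleton (two solutions driven by the same realization of the noise, quasi--positivity from Lemma~\ref{l:crucial} on large modes, a head/tail splitting of the mode index, Gronwall) matches the paper's proof, but your choice of norm breaks the one step everything hinges on: absorbing the \emph{off--diagonal} quadratic terms. Write $A_n=X^1_n-X^2_n$ and $S_n=X^1_n+X^2_n$; subtracting the two equations gives
\[
  \dot A_n
    = -\nu\lambda_n^2A_n
      + \lambda_{n-1}^\beta S_{n-1}A_{n-1}
      - \tfrac12\lambda_n^\beta\bigl(S_nA_{n+1}+S_{n+1}A_n\bigr).
\]
If you test this against $\lambda_n^{2\gamma}A_n$ and sum, the off--diagonal contributions combine, after shifting the index, into
\[
  \sum_n\bigl(\lambda^{2\gamma}-\tfrac12\bigr)\lambda_n^{2\gamma}\lambda_n^\beta\, S_nA_nA_{n+1},
\]
which vanishes identically if and only if $\lambda^{2\gamma}=\tfrac12$, i.e.\ $\gamma=-\tfrac12$. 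This is exactly why the paper runs the estimate in the negative--order norm $\psi=\sum_n A_n^2/\lambda_n=\|A\|_{-1/2}^2$ (the device inherited from \cite{BarFlaMor09b,BarMorRom10}): with that weight the cross terms cancel exactly, and only the diagonal term $-\sum_n\lambda_n^{\beta-1}S_{n+1}A_n^2$ survives, which is \emph{one--sided} and is precisely what quasi--positivity can handle ($S_{n+1}\geq-\tfrac56\nu\lambda_n^{2-\beta}$ turns it into at most $\nu\sum_n\lambda_nA_n^2$, absorbed by the dissipation $2\nu\sum_n\lambda_nA_n^2$; here $\beta\leq3$ is used). With your choices $\gamma=0$ (the $H$ norm) or $\gamma=\beta-2>0$ (the $V_{\beta-2}$ norm) the cross term survives with a positive coefficient, and nothing can absorb it: $A_nA_{n+1}$ has no sign, so positivity or quasi--positivity is useless for it, and the only available bound on $S_n$ is $|S_n|\leq\|S\|_H$, with no decay in $n$. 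Against a dissipation of order $\lambda_n^{2+2\gamma}A_n^2$, a term of order $\lambda_n^{\beta+2\gamma}|A_n||A_{n+1}|$ is supercritical exactly when $\beta>2$ --- the obstruction you yourself identify, but which your scheme does not overcome.

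A second, related error: you assert that the coefficient $C(t)$ in your differential inequality is finite because Section~\ref{s:negative} makes ``$\sup_n\lambda_{n-1}^\alpha|X_n|$ type quantities'' finite almost surely. Section~\ref{s:negative} controls only the \emph{negative part}, $\sup_n\lambda_{n-1}^\alpha(X_n)_-$; a two--sided bound $\sup_{[0,T]}\sup_n\lambda_n^{\beta-2}|X_n|<\infty$ is essentially the statement that no blow--up occurs in weighted norms, which is not available at this stage of the paper (it is what Theorem~\ref{t:smooth} proves later, and only for $\beta<\beta_c$). So your Gronwall coefficient is not known to be finite and the argument collapses. The repair is structural rather than technical: run the estimate in $\|\cdot\|_{-1/2}$ so that only the one--sided diagonal term remains; then your head/tail splitting (with the boundary terms between the two blocks treated as the paper does) and the Gronwall step go through.
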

We do not know if there is uniqueness in some larger class (energy martingale
solutions or weak martingale solutions), neither we know if a Galerkin martingale
solution may develop blow--up. By slightly restricting the range of values of
$\beta$, we do actually have an improvement over the previous result.
\begin{theorem}[Smoothness]\label{t:smooth}
  There exists $\beta_c\in (\tfrac52,3]$ such that the following
  statement holds true. Assume that \eqref{e:noise}, \eqref{e:noise2}
  hold and let $\beta\in(2,\beta_c)$ and $\alpha\in(\beta-2,1+\alpha_0)$,
  then for every $x\in V_\alpha$, $\tau_x^\alpha=\infty$. In particular,
  path--wise uniqueness holds in the class of \emph{energy} martingale
  solutions.
\end{theorem}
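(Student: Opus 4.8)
The plan is to deduce the absence of blow-up from the deterministic invariant-region estimate of \cite{BarMorRom10}, using the control of negative components of Section~\ref{s:negative} to absorb the loss of positivity produced by the noise. Fix $x\in V_\alpha$ and $T>0$; it is enough to prove $\tau_x^\alpha>T$ almost surely and then let $T\to\infty$. I work with the finite-dimensional approximations $Y^{(N)}$ of \eqref{e:galerkin} (which are globally defined), setting $X_n^{(N)}=Y_n^{(N)}+Z_n$, and aim at an a~priori bound on $\sup_{[0,T]}\|X^{(N)}\|_\alpha$ that is uniform in $N$ and almost surely finite; passing to the limit then shows that the strong solution does not leave $V_\alpha$ before $T$, i.e.\ $\tau_x^\alpha>T$.

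First I would record the lower bound. On the event $\{N_{\alpha,c_0}(T)<\infty\}$, which has full probability by Lemma~\ref{l:moments_N}, Lemma~\ref{l:crucial}, applied with $a_0$ (hence $c_0$) as small as we like, yields $Y_n^{(N)}(t)\ge-a_0\nu\lambda_{n-1}^{-\alpha}$ for every $t\in[0,T]$ and $n\ge n_0\vee N_{\alpha,c_0}(T)$, as soon as $N>N_{\alpha,c_0}(T)$. Since $Z$ is smooth by Lemma~\ref{l:zreg} (in particular $\sup_{[0,T]}\|Z\|_\gamma<\infty$ for $\gamma<\alpha_0+1$, with finite exponential moments), the high modes of $X^{(N)}=Y^{(N)}+Z$ are \emph{quasi-positive}: their negative part is at most $(a_0+c_0)\nu\lambda_{n-1}^{-\alpha}$, which is a vanishing multiple of the critical profile $\lambda_{n-1}^{-(\beta-2)}$ because $\alpha>\beta-2$.

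The core of the argument is to upgrade this into a two-sided bound by rerunning the invariant-area argument of \cite{BarMorRom10}. For genuinely nonnegative solutions that argument exhibits, for $\beta<\beta_c$, a region of the rescaled variables (formulated on pairs of consecutive modes) of the form $\{\lambda_n^{\beta-2}x_n\le M\}$ that is invariant under the dynamics, so that the critical bound $\sup_n\lambda_n^{\beta-2}X_n<\infty$ propagates from the datum and, by a bootstrap up to the regularity allowed by the noise, controls $\|X\|_\alpha$. I would carry out this argument for the Galerkin system, treating $Z$ as a smooth forcing and allowing the small controlled negativity from the previous step: because $a_0$ is as small as we please and $Z$ decays faster than the critical profile, the trapping region need only be enlarged by a negligible amount, and the sign of the nonlinear transfer that makes it trapping is preserved for the modes $n\ge n_0\vee N_{\alpha,c_0}(T)$, the finitely many low modes being harmless. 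This produces the desired uniform-in-$N$, almost surely finite bound. The main obstacle is precisely this adaptation --- one must verify that the trapping mechanism of \cite{BarMorRom10}, which relies essentially on positivity, survives the small negative excursions and the presence of $Z$, and this is exactly what confines the range to $\beta<\beta_c$.

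Finally, the uniqueness statement comes for free. Once $\tau_x^\alpha=\infty$ almost surely, Theorem~\ref{t:markov} identifies any energy martingale solution $\Prob_x$ with initial condition $x\in V_\alpha$ with the strong solution $X(\cdot;x)$ on $\{\tau_x^\alpha>t\}$ for every $t$, hence on a set of full probability; as the strong solution is path-wise unique by Theorem~\ref{t:strong}, all energy martingale solutions with the same initial condition coincide, which is the asserted uniqueness.
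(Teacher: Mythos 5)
Your overall strategy is indeed the paper's own: quasi--positivity from Lemma~\ref{l:crucial}, the invariant--area argument of \cite{BarMorRom10} run on the Galerkin system \eqref{e:galerkin} with $Z$ treated as a forcing, passage to the limit, and the identification of energy martingale solutions with the strong solution via Theorem~\ref{t:markov} for the uniqueness statement (that last paragraph of yours is exactly the paper's argument). The genuine gap is that the step you yourself label ``the main obstacle'' --- verifying that the trapping mechanism survives the small negativity and the presence of $Z$ --- is the entire content of the theorem, and the mechanism you propose for it (``the trapping region need only be enlarged by a negligible amount'') is not how it can be made to work. The paper does not enlarge the region; it restores \emph{exact} nonnegativity by shifting and rescaling the variables, setting $\epsilon_n=\nu\lambda_{n-1}^{-2\epsilon}$, $u_n=\lambda_n^\epsilon\bigl(\lambda_n^{\beta-2}Y_n^{(N_k)}+a_0\epsilon_n\bigr)$ and $v_n=\lambda_n^\epsilon\bigl(a_0\epsilon_n-\lambda_n^{\beta-2}Z_n\bigr)$, so that Lemma~\ref{l:crucial} (applied at the level $\beta-2+2\epsilon$, not $\alpha$) gives $u_n\geq0$ together with two--sided bounds on $v_n$. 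The noise and the shift then reappear as an extra vector field $\vf_n^0$ and extra terms in $\vf_n^1$, and making these harmless forces a non--trivial modification of the boundary of the region: the curve $h$ of \cite{BarMorRom10} must be replaced by $h_\eta$ of \eqref{e:hdef} to create a strictly positive margin $c_{\delta-\eta}$ that absorbs the $O(a_0)$ perturbations uniformly in $n$. Asserting that ``the sign of the nonlinear transfer is preserved'' is precisely what has to be proved here, and it is where the restriction $\beta<\beta_c$ and the smallness of $a_0$ quantitatively enter.

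A second, more technical flaw is your quantitative target. An invariant region ``of the form $\{\lambda_n^{\beta-2}x_n\leq M\}$'' yields only boundedness at the critical regularity, and your claim that the critical bound $\sup_n\lambda_n^{\beta-2}X_n<\infty$ can be bootstrapped to control $\|X\|_\alpha$ fails: the paper's bridge between the trapping estimate and $\tau_x^\alpha>T$ is the proposition identifying $\{\tau_\infty^\alpha>T\}$ with the set $K_T$, which demands $\max_{[0,T]}\lambda_n^{\beta-2}|\omega_n(t)|\to0$, i.e.\ genuine \emph{decay}, and its proof closes the recursion $m_n\leq A_n+\tfrac12 m_{n-1}$ only because the critical profile constants $M_{n-1}$ are eventually small. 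Mere boundedness with a large $M$ leaves a coefficient that need not be below $1$, and the recursion does not close. This is exactly why the paper's rescaled variables carry the extra weight $\lambda_n^\epsilon$: the conclusion $0\leq U_n\leq1$ of the trapping argument gives $\lambda_n^{\beta-2}Y_n=O(\lambda_n^{-\epsilon})$, hence membership in $K_T$, and then $\tau_\infty^\alpha>T$ follows from the proposition and $\tau_\infty^\alpha=\tau_x^\alpha$ from Theorem~\ref{t:markov} --- rather than from a uniform--in--$N$ bound on $\sup_{[0,T]}\|X^{(N)}\|_\alpha$, which the trapping argument by itself never provides.
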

\subsection{The proof of Theorem~\ref{t:unique}}

The proof is based on the idea in \cite[Proposition 3.2]{BarMorRom10} which
builds up on a result in \cite{BarFlaMor09b}. Both results are proved
for positive solutions (and no noise).
\begin{proof}[Proof of Theorem~\ref{t:unique}]
  It is sufficient to show uniqueness on any finite time interval, so
  we fix an arbitrary $T>0$ and show that there is only one solution
  on $[0,T]$. We will use Lemma~\ref{l:crucial} with $c_0=\tfrac16$
  and $a_0=\tfrac14$ (so that \eqref{e:crucial} holds for any $n_0$).
  In fact, since a Galerkin martingale solution is the component--wise
  limit of finite dimensional approximations, the bounds of the lemma
  remain stable in the limit to the infinite dimensional system.
  
  Since $X(0)\in V_{\beta-2}$, we know that there is $n_0\geq1$ such
  that $\lambda_n^{\beta-2}X_n(0)\geq -\tfrac14\nu$. We set
  $N_0 = 1 + n_0\vee N_{\beta-2,\frac16}(T)$.

  Let $X^1$, $X^2$ two solutions with the same given initial condition
  $X^1(0) = X^2(0) = X(0)$. By Lemma \ref{l:crucial} we know that
  $X_n^i(t)\geq Z_n(t) - \tfrac14\nu\lambda_{n-1}^{2-\beta}$
  for $n\geq N_0$, $t\in[0,T]$ and $i=1,2$. Set now
  \[
    A_n = X_n^1 - X_n^2,
      \qquad
    B_n = \frac12\nu\lambda_{n-1}^{2-\beta} - 2 Z_n
  \]
  and
  \[
    \psi_\ell(t) = \sum_{n=1}^{N_0-1}\frac{A_n^2}{\lambda_n},
      \qquad
    \psi_{h,N}(t) = \sum_{N_0}^N\frac{A_n^2}{\lambda_n},
      \qquad
    \psi_N(t) = \psi_\ell(t) + \psi_{h,N}(t),
  \]
  and notice that $X_n^1 + X_n^1  + B_n\geq0$ if $t\in[0,T]$ and
  $n\geq N_0$. A simple computation shows that
  \begin{multline*}
    \dot A_n
      = - \nu\lambda_n^2 A_n
        + \lambda_{n-1}^\beta(X_{n-1}^1+X_{n-1}^2)A_{n-1} + {}\\
        - \frac12\lambda_n^\beta\bigl[(X_n^1+X_n^2)A_{n+1}
          + (X_{n+1}^1+X_{n+1}^2)A_n\bigr],
  \end{multline*}
  hence for $N > N_0$,
  \begin{multline*}
    \frac{d}{dt}\psi_{h,N}
        + 2\nu\sum_{n=N_0}^N \lambda_n A_n^2
      = - \sum_{n=N_0}^N \lambda_n^{\beta-1}(X_{n+1}^1 + X_{n+1}^2) A_n^2 + {}\\
        - \lambda_N^{\beta-1}(X_N^1+X_N^2)A_NA_{N+1}
        + \lambda_{N_0-1}^{\beta-1}(X_{N_0-1}^1+X_{N_0-1}^2)A_{N_0-1}A_{N_0} =\\
      = \memo{1} + \memo{2}_N + \memo{3}_{N_0}.
  \end{multline*}
  For the first term we notice that $B_{n+1}\leq \tfrac56\nu\lambda_n^{2-\beta}$,
  hence
  \[
    \memo{1}
      \leq \sum_{n=N_0}^N\lambda_n^{\beta-1}B_{n+1}A_n^2
      \leq \nu\sum_{n=N_0}^N\lambda_n A_n^2.
  \]
  For the second term,
  \[
    \sum_{N=1}^\infty \int_0^T \memo{2}_N\,dt
      \leq \sup_{[0,T]}\|X^1 + X^2\|_H\int_0^T\|A\|_{\frac12(\beta-1)}^2\,ds,
  \]
  and the quantity on the right--hand side is {a.~s.} finite since by \eqref{e:noise} 
  $Z$ is {a.~s.} in $C([0,T];V_\gamma)$ for every $\gamma<1+\alpha_0$ (hence for
  $\gamma=\tfrac12(\beta-1)$), and since by Definition~\ref{d:energy},
  $V\in L^2([0,T];V)$ (and $\tfrac12(\beta-1)\leq1$). This implies
  that {a.~s.} $\int_0^T\memo{2}_N\,dt\to0$ as $N\to\infty$.
  
  Likewise (recall that $X_0^1 = X_0^2 = 0$),
  \[
    \frac{d}{dt}\psi_\ell
      \leq - \sum_{n=1}^{N_0-1} \lambda_n^{\beta-1}(X_{n+1}^1 + X_{n+1}^2) A_n^2
           - \memo{3}_{N_0}
      \leq \lambda_{N_0-1}^\beta\bigl(\sup_{[0,T]}\|X^1 + X^2\|_H\bigr)\psi_\ell
           - \memo{3}_{N_0},
  \]
  and in conclusion
  \[
    \frac{d}{dt}\psi_N
      \leq \lambda_{N_0-1}^\beta\bigl(\sup_{[0,T]}\|X^1 + X^2\|_H\bigr)\psi_\ell
           + \memo{2}_N.
  \]
  Integrate in time (recall that $\psi_N(0) = 0$) and take the limit as $N\uparrow\infty$,
  \[
    \psi(t)
      \leq \lambda_{N_0-1}^\beta\bigl(\sup_{[0,T]}\|X^1 + X^2\|_H\bigr) \int_0^t\psi(s)\,ds
  \]
  where $\psi_N\uparrow\psi$ and $\psi(t) = \|A(t)\|_{-\frac12}^2$. Since
  the term $\lambda_{N_0-1}^\beta\bigl(\sup_{[0,T]}\|X^1 + X^2\|_H\bigr)$ is {a.~s.}
  finite, by Gronwall's lemma it follows that {a.~s.} $A(t)=0$ for all $t\in[0,T]$.
\end{proof}
\subsection{The proof of Theorem~\ref{t:smooth}}

We start by giving a minimal requirement for smoothness of solutions
to~\eqref{e:dyadic} which is analogous to the criterion developed in the
noise--less case (see~\cite{BarMorRom10}).

Given $T>0$ define the subspace $K_T$ of $\Omega_\beta$ as
\[
  K_T
    = \bigl\{\omega\in\Omega_\beta: \lim_n\bigl(\max_{t\in[0,T]}\lambda_n^{\beta-2}|\omega_n(t)|\bigr) = 0\bigr\}.
\]
Clearly $L^\infty(0,T;V_\alpha)\subset K_T$ for all $\alpha>\beta-2$.
\begin{proposition}
  Under the assumptions of Theorem~\ref{t:smooth}, let $x\in V_\alpha$.
  If $\Prob_x$ is an energy martingale solution (Definition~\ref{d:energy})
  with initial condition $x$ and $\tau_\infty^\alpha$
  is the random time defined in \eqref{e:tauinfty}, then for every $T>0$,
  \[
    \{\tau_\infty^\alpha>T\}
      = K_T,
        \qquad\text{under }\Prob_x.
  \]
\end{proposition}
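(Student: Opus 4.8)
The plan is to establish the two inclusions $\{\tau_\infty^\alpha>T\}\subseteq K_T$ and $K_T\subseteq\{\tau_\infty^\alpha>T\}$ separately, each $\Prob_x$--almost surely. The first is the soft one and rests on the coincidence of energy and strong solutions below the blow--up time. By the second bullet of Theorem~\ref{t:markov}, on $\{\tau_\infty^\alpha>T\}$ one has $\tau_x^\alpha=\tau_\infty^\alpha>T$ and $\xi_s=X(s;x)$ for $s\le T$, where $(X(\cdot;x),\tau_x^\alpha)$ is the strong solution. Since the strong solution is continuous with values in $V_\alpha$ on $[0,\tau_x^\alpha)\supseteq[0,T]$, the map $t\mapsto\|\xi_t\|_\alpha$ is continuous on the compact interval $[0,T]$, hence bounded, so $\xi\in L^\infty(0,T;V_\alpha)$. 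As $\alpha>\beta-2$, the inclusion $L^\infty(0,T;V_\alpha)\subseteq K_T$ recorded just before the statement gives $\xi\in K_T$.

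The substance is the reverse inclusion, a \emph{regularity criterion}: control of the critical decay $\lambda_n^{\beta-2}|\xi_n|$ on $[0,T]$ forces $\xi$ to remain in $V_\alpha$ up to time $T$. I would argue by contradiction, assuming $\omega\in K_T$ but $\tau_\infty^\alpha\le T$, and derive an a~priori bound on $\|\xi_t\|_\alpha$ over $[0,\tau_\infty^\alpha)$ that contradicts $\|X(t;x)\|_\alpha\to\infty$ as $t\uparrow\tau_\infty^\alpha$. To avoid the Itô trace term $\sum_n\lambda_n^{2\alpha}\sigma_n^2$ (finite only for $\alpha<\alpha_0$, hence an obstruction near the top of the range), I would work with $Y=\xi-Z$, which solves the random equation~\eqref{e:v} \emph{without} noise and is thus amenable to a classical energy estimate; recall $Z\in C([0,T];V_\gamma)$ for every $\gamma<1+\alpha_0$ by Lemma~\ref{l:zreg}. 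Testing~\eqref{e:v} against $(\lambda_n^{2\alpha}Y_n)_n$ and using the nearest--neighbour structure, the quadratic interaction produces a cubic term in which one factor is estimated by the critical quantity $\delta_m(T):=\max_{[0,T]}\lambda_m^{\beta-2}|\xi_m|$. The defining property of $K_T$ is precisely $\delta_m(T)\to0$, so for each $\epsilon>0$ there is $m_\epsilon$ with $\delta_m(T)<\epsilon$ for $m\ge m_\epsilon$.

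The mechanism I would exploit is that the $K_T$--decay sits exactly at the level where, as noted in the introduction, linear and nonlinear terms balance ($\lambda_n^{\beta-2}X_n=O(1)$). Splitting the cubic term at $m_\epsilon$, the high--mode part carries the small factor $\epsilon$ and is absorbed into the dissipation $\nu\|Y\|_{\alpha+1}^2$ coming from the viscous term, while the low--mode part is a finite sum controlled by the energy bound $Y\in L^\infty_\loc(0,\infty;H)$ of Definition~\ref{d:energy} and by $\sup_{[0,T]}\|Z\|_\gamma$. This yields a differential inequality of the form
\[
  \frac{d}{dt}\|Y\|_\alpha^2
    \le -\frac{\nu}{2}\,\|Y\|_{\alpha+1}^2 + C_\epsilon(t),
    \qquad \int_0^T C_\epsilon<\infty,
\]
whence a Gronwall bound on $\sup_{[0,\tau_\infty^\alpha)}\|Y_t\|_\alpha$ and thus on $\sup\|\xi_t\|_\alpha$, the desired contradiction. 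The formal computation would be made rigorous by running it on the truncations~\eqref{e:galerkin}, where differentiation is legitimate and the dissipative cancellation is exact, and passing to the limit through the component--wise convergence already used in Section~\ref{s:negative}; alternatively, directly on the strong solution, which below blow--up solves a genuinely parabolic system and therefore lies in $L^2_\loc$ with values in $V_{\alpha+1}$.

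The step I expect to be the main obstacle is closing the energy estimate \emph{uniformly over the whole range} $\alpha\in(\beta-2,1+\alpha_0)$. The difficulty is twofold: the interaction is critical with respect to the $K_T$--control, so absorption into viscosity genuinely requires the $o(1)$ (not merely $O(1)$) decay encoded in $K_T$; and the cross terms involving $Z$ are only controlled by $\|Z\|_\gamma$ with $\gamma<1+\alpha_0$, which is insufficient to reach the top of the range in one estimate. I would therefore organise the argument as a bootstrap: first establish membership in $V_{\alpha'}$ for some $\alpha'$ just above $\beta-2$, then feed the resulting genuine decay $|\xi_n|\lesssim\lambda_n^{-\alpha'}$ back into the estimate (which then needs no splitting of $\xi$ into $Y$ and $Z$) to gain regularity in successive steps, climbing to any $\alpha<1+\alpha_0$ in finitely many iterations. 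Checking that each step gains a fixed positive amount of regularity, and that the $Z$--dependent contributions stay integrable throughout, is the technical heart of the proof.
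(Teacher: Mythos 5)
Your first inclusion is fine (and matches the paper's in substance), but your treatment of the reverse inclusion $K_T\subseteq\{\tau_\infty^\alpha>T\}$ contains a genuine gap, and it is exactly the one you flagged as ``the technical heart'': the weighted energy estimate on $Y=\xi-Z$ cannot be closed on the full range of parameters, and the bootstrap you propose does not repair it. Concretely: the dissipation controls $Y$ only at regularity $\alpha+1$, so after using the $K_T$ bound on one factor of $\lambda_{n-1}^\beta X_{n-1}^2$, the cross term with $Z$ reads
\[
  \sum_n M_{n-1}\lambda_n^{2\alpha}\lambda_{n-1}^2|Y_n||Z_{n-1}|
    = \lambda^{\alpha-1}\sum_n M_{n-1}\bigl(\lambda_n^{\alpha+1}|Y_n|\bigr)\bigl(\lambda_{n-1}^{\alpha+1}|Z_{n-1}|\bigr),
\]
and after Cauchy--Schwarz and Young this requires $\sup_{[0,T]}\|Z\|_{\alpha+1}<\infty$; the factor $M_{n-1}$, which tends to $0$ with no rate, cannot be traded for regularity of $Z$. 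By Lemma~\ref{l:zreg}, $Z$ lies in $V_\gamma$ only for $\gamma<1+\alpha_0$, so your estimate closes only when $\alpha<\alpha_0$. But Assumption~\ref{a:noise2} only requires $\alpha_0>\tfrac12(\beta-3)$, which allows $\alpha_0\leq\beta-2$ (take e.g.\ $\beta=\tfrac52$ and bounded $\sigma_n$, i.e.\ $\alpha_0=0$): then \emph{every} $\alpha$ in the range $(\beta-2,1+\alpha_0)=(\tfrac12,1)$ of the proposition exceeds $\alpha_0$, and your one-shot estimate never applies. The bootstrap fails in the same regime: redoing the cross-term count with a known decay $|\xi_n|\lesssim\lambda_n^{-\alpha'}$ in hand, the next level $\alpha''$ must satisfy $\alpha''<\alpha'+(2+\alpha_0-\beta)$, so the gain per step is $2+\alpha_0-\beta$, which is non-positive precisely when $\alpha_0\leq\beta-2$; in that regime even the first step (which only has the critical decay $\beta-2$ available) is out of reach.

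The paper's proof avoids this obstruction by never summing a weighted energy identity: it works mode by mode with the mild (variation-of-constants) form of each component equation. Setting $u_n=\lambda_n^\alpha\xi_n$ and $m_n=\max_{[0,T]}|u_n|$, the $K_T$ bound makes the nonlinearity of size $M_{n-1}\lambda_n^2\bigl(\lambda^{\alpha-2}|u_{n-1}|+\lambda^{2-\beta}|u_n|\bigr)$, and the semigroup integral $\int_0^t\e^{-\nu\lambda_n^2(t-s)}\,ds\leq(\nu\lambda_n^2)^{-1}$ cancels the $\lambda_n^2$ exactly; the noise then enters only additively, as $\lambda_n^\alpha|Z_n(t)|$, which requires $Z$ at regularity just above $\alpha$ (available for all $\alpha<1+\alpha_0$), not above $\alpha+1$. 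For $n$ large this yields the contraction $m_n\leq A_n+\tfrac12m_{n-1}$ with $\sum_nA_n^2<\infty$ almost surely, hence $\sum_nm_n^2<\infty$, i.e.\ $\xi\in L^\infty(0,T;V_\alpha)$ directly --- a constructive argument with no contradiction, no Galerkin passage (each component equation is an ODE along paths, so Duhamel is legitimate for any energy martingale solution), valid on the whole range of $\alpha$ in a single step. To salvage your plan you would have to replace the testing-by-$\lambda_n^{2\alpha}Y_n$ step with this componentwise estimate; as written, your argument proves the proposition only under the extra hypothesis $\alpha<\alpha_0$, which is strictly stronger than what the paper assumes.
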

\begin{proof}
  Fix $\alpha\in(\beta-2,\alpha_0+1)$, $x\in V_\alpha$ and an energy martingale
  solution $\Prob_x$ starting at $x$, and let $\tau_\infty^\alpha$ be the random
  time defined in \eqref{e:tauinfty}.
  Assume $\tau_\infty^\alpha(\omega)>T$, then there is $R_0>\|x\|_\alpha$
  such that $\tau_\infty^{\alpha,R_0}(\omega)>T$, where $\tau_\infty^{\alpha,R_0}$
  is also defined in \eqref{e:tauinfty}. In particular
  $\|\xi_t(\omega)\|_\alpha\leq R_0$ for $t\in[0,T]$. Hence
  \begin{multline*}
    \lambda_n^{\beta-2}\max_{[0,T]}|\xi_n(t,\omega)|
      \leq \lambda_n^{\beta-2-\alpha}\max_{[0,T]}|\lambda_n^\alpha\xi_n(t,\omega)|\leq\\
      \leq \lambda_n^{\beta-2-\alpha}\sup_{[0,T]}\|\xi_t(\omega)\|_\alpha
      \leq R_0\lambda_n^{\beta-2-\alpha}
  \end{multline*}
  and $\omega\in K_T$.

  Vice versa, assume that $\omega\in K_T$ and choose a decreasing sequence $(M_n)_{n\geq1}$
  such that $M_n\downarrow0$ and $\lambda_n^{\beta-2}\max_{[0,T]}|\xi_n(t)|\leq M_n$.
  Set $u_n = \lambda_n^\alpha\xi_n$ and $m_n = \max_{[0,T]}|u_n(t)|$,
  then
  \[
    \begin{aligned}
      |u_n(t)|
        &\leq   |u_n(0)|
              + \lambda_n^\alpha|Z_n(t)|
              + M_{n-1}\lambda_n^2\int_0^t\e^{-\nu\lambda_n^2(t-s)}\bigl(\lambda^{\alpha-2}|u_{n-1}| + \lambda^{2-\beta}|u_n|\bigr)\,ds\\
        &\leq   |u_n(0)|
              + \bigl(\sup_{[0,T]}\lambda_n^\alpha|Z_n(t)|\bigr)
              + \frac1\nu\lambda^{\alpha-2}M_{n-1}m_{n-1}
              + \frac1\nu\lambda^{2-\beta}M_{n-1}m_n
    \end{aligned}
  \]
  and in conclusion
  \[
    \Bigl(1 - \frac1\nu\lambda^{2-\beta}M_{n-1}\Bigr)m_n
      \leq   |u_n(0)|
           + \bigl(\sup_{[0,T]}\lambda_n^\alpha|Z_n(t)|\bigr)
           + \frac1\nu\lambda^{\alpha-2}M_{n-1}m_{n-1}.
  \]
  For $n$ large enough we have that
  \[
    M_{n-1}\leq\frac{\nu}{2\lambda^{2-\beta}+2\lambda^{\alpha-2}}
      \qquad\text{and\qquad}
    \frac{\lambda^{\alpha-2}M_{n-1}}{\nu - \lambda^{2-\beta}M_{n-1}}\leq \frac12,
  \]
  and in particular $\bigl(1 - \tfrac1\nu\lambda^{2-\beta}M_{n-1}\bigr)\geq\tfrac12$,
  hence
  \[
    m_n\leq A_n + \frac12 m_{n-1}
  \]
  where $A_n = 2\lambda_n^\alpha|x_n| + 2 \bigl(\sup_{[0,T]}\lambda_n^\alpha|Z_n(t)|\bigr)$.
  By Lemma~\ref{l:zreg} (applied to an $\alpha'>\alpha$) we have
  $\sum_n A_n^2<\infty$ with probability $1$, since
  $\sum A_n^2\leq c(\|x\|_\alpha^2 + \sup_{[0,T]}\|Z\|_{\alpha'}^2)$. Moreover
  \[
    m_n^2
      \leq A_n^2 + \frac14 m_{n-1}^2 + A_n m_{n-1}
      \leq \frac32 A_n^2 + \frac34 m_{n-1}^2,
  \]
  and by solving the recursive inequality we get $\sum_n m_n^2<\infty$, which
  in particular implies that $\tau_\infty^\alpha(\omega)>T$.
\end{proof}
In order to prove Theorem~\ref{t:smooth}, we essentially prove that, given
a smooth $x$ and $T>0$, there is a solution $\Prob_x$ starting at $x$ that
satisfies $\Prob_x[K_T]=1$, and hence is the unique solution.
\begin{proof}[Proof of Theorem~\ref{t:smooth}]
  Fix $\alpha\in(\beta-2,\alpha_0+1)$, $x\in V_\alpha$, $T>0$ and an energy
  martingale solution $\Prob_x$ starting at $x$, and let $\tau_\infty^\alpha$
  be the random time defined in \eqref{e:tauinfty}. Without
  loss of generality we can assume that $\Prob_x$ is a Galerkin solution (see
  Definition \ref{d:galerkin}). Indeed, by Theorem~\ref{t:markov} the random time
  $\tau_\infty^\alpha$ coincides {a.~s.} with the lifespan $\tau_x^\alpha$ of the
  strong solution with the same initial condition $x$, and the probability of
  the event $\{\tau_\infty^x\}$ does not depend on $\Prob_x$, but only on
  the strong solution.
  
  Since $\Prob_x$ is a Galerkin solution, there are
  $(x^{(N_k)},\Prob^{(N_k)})_{k\in\N}$ such that $x^{(N_k)}\to x$ in $H$
  and $\Prob^{(N_k)}\rightharpoonup\Prob_x$ in $\Omega_\beta$, and for each
  $k$, $\Prob^{(N_k)}$ is the solution of \eqref{e:galerkinx}, with dimension
  $N_k$ and initial condition $x^{(N_k)}$. By definition we also have that
  $x_n^{(N_k)}=x_n$ for $n\leq N_k$.

  By a standard argument (Skorokhod's theorem) there are a common probability
  space $(\widetilde\Omega,\widetilde\field,\widetilde\Prob)$ and random
  variables $X^{(N_k)}$, $X$ on $\widetilde\Omega$ with distributions
  $\Prob^{(N_k)}$, $\Prob_x$ respectively, such that, in particular,
  $X_n^{(N_k)}\to X_n$, $\widetilde\Prob$--{a.~s.}, uniformly on $[0,T]$
  for all $n\geq1$ and all $T>0$.

  Let $\epsilon>0$ be small enough such that $\alpha>\beta-2+2\epsilon$
  and $6-2\beta-3\epsilon>0$. We will use
  Lemma~\ref{l:crucial} with $a_0<\tfrac12$ (to be chosen later in the proof)
  and $c_0=\tfrac13 a_0$.
  Let $\overline n$ be the smallest integer such that
  $\lambda_{n-1}^\alpha |x_n|\leq a_0\nu$ for all $n\geq\overline n$,
  and set $N_0 = \overline n\vee N_{\beta-2+2\epsilon,c_0}(T)$.

  For each integer $n_0\geq1$ and real $M>0$ define the sets
  \[
    A_M(n_0)
      = \Bigl\{\sup_{[0,T]}\bigl(|X_{n_0-1}^{(N_k)}| + |X_{n_0}^{(N_k)}|\bigr)
                 \leq M\text { for all $k$ such that }N_k\geq n_0\Bigr\},\\
  \]
  Notice that $\widetilde\Prob\bigl[\bigcup_{M>0}A_M(n_0)\bigr] = 1$ since
  $X_n^{(N_k)}\to X_n$ uniformly, $\widetilde\Prob$--{a.~s.}, for all $n\geq1$,
  hence
  \[
    \widetilde\Prob\Bigl[\bigcup_{n_0\geq 1,\ M>0} \bigl(\{N_0 = n_0\}\cap A_M(n_0)\bigr)\Bigr]
      = 1.
  \]
  Fix $n_0\geq1$, $M>0$ and $N\geq n_0$, then everything boils down to prove
  that $K_T$ happens on $\{N_0 = n_0\}\cap A_M(n_0)$ for
  $(X_n^{(N_k)})_{n_0\leq n\leq N_k}$ uniformly in $k$. In the following we work
  path--wise for $\omega\in\{N_0 = n_0\}\cap A_M(n_0)$ and we aim to use the
  method in~\cite{BarMorRom10}, which is based on proving that the area shown
  in Figure~\ref{f:trapping} is invariant for the dynamics of the (suitably
  rescaled) solution.
  \begin{figure}[ht]
    \begin{tikzpicture}[x=30mm,y=30mm,domain=0.1:1,smooth]
      \fill[color=lightgray] (0,0) -- (0.1,0) -- (0.1,0.1) -- (0,0.1);
      \draw[->,thin] (0,0) -- (1.5,0) node [right] {\scriptsize $U_n$};
      \draw[->,thin] (0,0) -- (0,1.1) node [above] {\scriptsize $U_{n+1}$};
      \draw[thick] (0,0) -- (0, 0.6) node[left] {\scriptsize $\theta$} --%
         (0.53,1) -- (1,1) -- (1,0.66) node[right] {\scriptsize $c$};
      \draw[thick] plot (\x,{.82*(\x-0.1)*(\x-0.1)});
      \draw[thick] (0.1,0) node[below] {\scriptsize $\delta$} -- (0,0);
      \draw[densely dotted] (0,1) node[left] {\scriptsize $1$} -- (0.53,1)%
         (1,0) node[below] {\scriptsize $1$} -- (1,0.66);
      \draw (0.5,0.5) node {\large $A$};
      \draw[->,>=latex] (0,0.3)     -- (0.1,0.3) node[right] {\Tiny $\vec{n}_1$};
      \draw[->,>=latex] (0.26,0.8)  -- (0.33,0.72) node[right] {\Tiny $\vec{n}_2$};
      \draw[->,>=latex] (0.77,1)    -- (0.77,0.9) node[left] {\Tiny $\vec{n}_3$};
      \draw[->,>=latex] (1,0.84)    -- (0.9,0.84) node[below, text centered] {\Tiny $\vec{n}_4$};
      \draw[->,>=latex] (.67,0.27)  -- (0.6,0.33) node[left] {\Tiny $\vec{n}_5$};
      \draw[->,>=latex] (0.05,0)    -- (0.05,0.1) node[right] {\Tiny $\vec{n}_6$};
      \draw[->,>=latex] (-.5,.8) node [left] {\Tiny $g(x)$} to [out=-20,in=135] (.2,.8);
      \draw[<-,>=latex] (.7,.27) to [in=160,out=-45] (1.5,.27) node [right] {\Tiny $h(x)$ (or $h_\eta(x)$)};
    \end{tikzpicture}
    \caption{The invariant area.}
    \label{f:trapping}
  \end{figure}
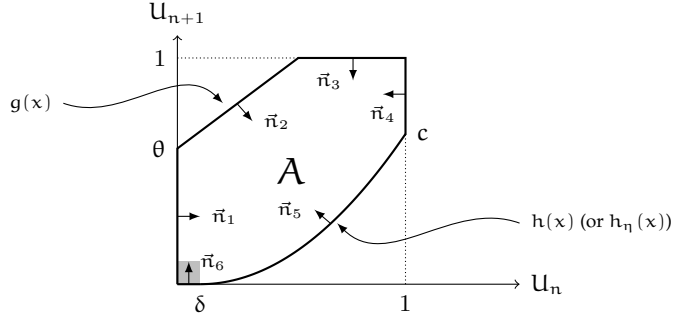
  In \cite{BarMorRom10} the area $A$ in the picture is defined for $\beta\leq\tfrac52$
  with the values $c=\lambda^{-(6-2\beta-3\epsilon)}$, $g(x)=\min(mx+\theta,1)$,
  $h(x) = c(\max(x-\delta,0)/(1-\delta))^{\lambda^2}$, $\delta=\tfrac1{10}$,
  $\theta=\tfrac35$ and $m=\tfrac34$. To deal
  with the random perturbation we shall need to slightly modify the set $A$,
  in particular we will replace the border delimited by $h(x)$ with a new
  function $h_\eta$ (see~\eqref{e:hdef}).
  
  First we define the re--scaling. Let $\epsilon_n = \nu\lambda_{n-1}^{-2\epsilon}$
  and
  \[
    \begin{cases}
      u_n = \lambda_n^\epsilon(\lambda_n^{\beta-2}Y_n^{(N_k)} + a_0\epsilon_n),\\
      v_n = \lambda_n^\epsilon(a_0\epsilon_n - \lambda_n^{\beta-2}Z_n),
    \end{cases}
  \]
  then we know by Lemma~\ref{l:crucial} that
  \begin{equation}\label{e:uvbounds}
    \begin{cases}
      u_n\geq0,\\
      \frac23a_0\lambda_n^\epsilon\epsilon_n
        \leq v_n
        \leq \frac53a_0\lambda_n^\epsilon\epsilon_n
    \end{cases}
    \qquad\text{for all }n_0\leq n\leq N_k.
  \end{equation}
  We have that
  \begin{multline*}
    \dot u_n
      = \lambda_n^{2-\epsilon}\Bigl[
          - \nu\lambda_n^\epsilon u_n
          + a_0\nu\lambda_n^{2\epsilon}\epsilon_n
          + \lambda^{\beta-4+2\epsilon}(u_{n-1} - v_{n-1})^2 + {}\\
          - \lambda^{2-\beta-\epsilon}(u_n - v_n)(u_{n+1} - v_{n+1})
        \Bigr].
  \end{multline*}
  Define $\delta_0>0$ as
  \[
    \frac1\delta_0
      = \max\Bigl\{\frac1\delta,
                   \lambda_{n_0}^{\beta-2+\epsilon}M + 2a_0\lambda_{n_0}^\epsilon\epsilon_{n_0-1},
                   \sup_{n\geq n_0}\lambda_n^\epsilon(\lambda_n^{\beta-2}x_n+2a_0\epsilon_n)\Bigr\}
  \]
  and replace $u_n$ with $U_n(t) = \delta_0^2 u_n(\delta_0 t)$. It turns
  out that $U_n(0)\leq\delta_0\leq\delta$ for all $n\geq n_0$ and 
  $\max_{[0,T]} U_{n_0-1}\leq\delta_0\leq\delta$ and
  $\max_{[0,T]} U_{n_0}\leq\delta_0\leq\delta$, since
  \[
    |U_{n_0}|
      \leq \delta_0^2\lambda_n^\epsilon\bigl(\lambda_{n_0}^{\beta-2}|X_{n_0}^{(N_k)}| + a_0\epsilon_{n_0} + \lambda_{n_0}^{\beta-2}|Z_{n_0}|\bigr)
      \leq \delta_0^2\bigl(\lambda_{n_0}^{\beta-2+\epsilon}M + 2a_0\lambda_{n_0}^\epsilon\epsilon_{n_0}\bigr)
      \leq \delta
  \]
  (and similarly for $U_{n_0-1}$). Moreover the $U_n$ satisfy
  \[
    \dot U_n
      = \lambda_n^{2-\epsilon}
         \Bigl[
             \delta_0^3 P_n^0(U,V)
           + \delta_0   P_n^1(U,V)
           + \frac1{\delta_0}\lambda^{\beta-4+2\epsilon} P_n^2(U,V)
         \Bigr],
  \]
  where
  \[
    \begin{aligned}
      P_n^0
        &=   a_0\nu\lambda_n^{2\epsilon}\epsilon_n
           + \lambda^{\beta-4+2\epsilon} V_{n-1}^2
           - \lambda^{2-\beta-\epsilon} V_n V_{n+1},\\
      P_n^1
        &= - \nu\lambda_n^\epsilon U_n
           - 2\lambda^{\beta-4+2\epsilon} V_{n-1} U_{n-1}
           + \lambda^{2-\beta-\epsilon} (V_{n+1} U_n + V_n U_{n+1}),\\
      P_n^2
        &=   U_{n-1}^2 - \lambda^{6-2\beta-3\epsilon} U_n U_{n+1},
    \end{aligned}
  \]
  and $V_n(t) = v_n(\delta_0 t)$.

  We proceed now as in \cite{BarMorRom10} and consider for each $n\geq n_0$
  the coupled systems in $(U_n,U_{n+1})$,
  \begin{equation}\label{e:field}
    \frac{d}{dt}
      \begin{pmatrix}
        U_n\\
        U_{n+1}
      \end{pmatrix}
        = \lambda_n^{2-\epsilon}\Bigl(
              \delta_0^3\vf_n^0
            + \delta_0\vf_n^1
            + \frac1{\delta_0}\lambda^{\beta-4+2\epsilon}\vf_n^2\Bigr)
  \end{equation}
  where
  \[
    \vf_n^i
      =  \begin{pmatrix}
           P_n^i\\
           \lambda^{2-\epsilon}P_{n+1}^i
         \end{pmatrix},
      \qquad i=0,1,2,
  \]
  The goal is to prove that $(U_n(t))_{n\geq n_0}$ is uniformly bounded in $n$
  and $t$. Indeed, we will show that $0\leq U_n(t)\leq 1$ for all
  $n\in\{n_0,\dots,N_k\}$, which in turns implies that
  \[
    -\lambda_n^\epsilon\epsilon_n \leq \lambda_n^{\beta-2+\epsilon}Y_n^{(N_k)} \leq \frac1{\delta_0^2}.
  \]
  for all $n\in\{n_0,\dots,N_k\}$ and all $t\in[0,T]$. Since
  $Y_n^{(N_k)}\to Y_n$ uniformly on $[0,T]$ for each $n$, the
  same bound holds for the limit $Y$ and, due to Lemma~\ref{l:zreg}, $X\in K_T$.

  It remains to show that $U_n\leq 1$, and to this end we show that each pair
  $(U_n(t), U_{n+1}(t))$, for $n\geq n_0$, remains in $A$ (as it is already
  in the interior of $A$ at time $t=0$ by the choice of $\delta_0$). This will
  be again a dynamical system proof and it is enough to prove that the vector
  field in \eqref{e:field} which gives $(\dot U_n, \dot U_{n+1})$ points
  inwards on the boundary of $A$, or equivalently that the scalar product
  of the vector fields with the normal inwards is positive.

  We make two preliminary remarks: first there is nothing to prove for the
  two pieces of boundary corresponding to the normal vectors $\vec{n}_1$ and
  $\vec{n}_6$, as this is essentially already ensured by Lemma~\ref{l:crucial},
  and second that since $A$ is convex it is also possible to check that the product
  of each of the three vector fields $\vf_n^0$, $\vf_n^1$ and $\vf_n^2$
  separately is positive.

  \emph{The vector field $\vf^1$}.
  We will use \eqref{e:uvbounds}, that $U\leq 1$ in $A$ and that $\epsilon_n$ is
  non--increasing. We will obtain lower bounds that are positive if the number
  $a_0$ is chosen small enough (with size depending only on the constants $m$,
  $\beta$ and $\epsilon$, but not on $M$, $n_0$ or $\delta_0$).
  
  On the border with normal $\vec{n}_2 = (g',1) = (m,-1)$ we have
  $U_n\in[0,\tfrac{1-\theta}{m}]$ and $U_{n+1}=mU_n+\theta$, hence
  $\lambda^2 U_{n+1} - m U_n\geq\lambda^2\theta$ and
  \begin{equation}\label{e:b12}
    \begin{aligned}
      \vf_n^1\cdot \vec{n}_2
        & =   m P_n^1 - \lambda^{2-\epsilon} P_{n+1}^1\\
        &\geq \nu\lambda_n^\epsilon(\lambda^2 U_{n+1} - m U_n) - 4a_0(m\lambda^{\beta-4+\epsilon}+\lambda^{4-\beta})\lambda_n^\epsilon\epsilon_{n-1}\\
        &\geq \lambda_n^\epsilon\bigl(\nu\lambda^2\theta - a_0 c(m,\beta,\epsilon)\epsilon_{n-1}\bigr)
    \end{aligned}
  \end{equation}
  On the border with normal $\vec{n}_3 = (0, -1)$ we have $U_{n+1} = 1$, hence
  \begin{equation}\label{e:b13}
    \vf_n^1\cdot \vec{n}_3
      =    - \lambda^{2-\epsilon} P_{n+1}^1
      \geq \lambda^{2-\epsilon} \lambda_{n+1}^\epsilon (\nu - 4a_0\lambda^{2-\beta}\epsilon_{n+1})
  \end{equation}
  Similarly, on the border with normal $\vec{n}_4 = (-1, 0)$ we have $U_n = 1$, hence
  \begin{equation}\label{e:b14}
    \vf_n^1\cdot \vec{n}_4
      =    - P_n^1
      \geq \lambda_n^\epsilon (\nu - 4a_0\lambda^{2-\beta}\epsilon_n).
  \end{equation}
  Finally we consider the border with normal $\vec{n}_5$. Prior to this we give the
  precise definition of this piece of the boundary. For $\eta\in(0,1)$ define
  $\varphi_\eta(x) = \bigl((x-\eta)/(1-\eta)\bigr)^{\lambda^2}$, $x\in[\eta,1]$,
  and notice that $\varphi_\eta$ is positive, increasing, convex and
  \[
    x\varphi_\eta' - \lambda^2\varphi_\eta
      \geq \frac{\lambda^2\eta}{1-\eta}\Bigl(\frac{\delta-\eta}{1-\eta}\Bigr)^{\lambda^2-1}
      >0.
  \]
  In \cite{BarMorRom10} it was used $h(x) = c\varphi_\delta(x)$. Here we consider
  \begin{equation}\label{e:hdef}
    h_\eta(x)
      = \frac{c}{1-\varphi_\eta(\delta)}\bigl(\varphi_\eta(x) - \varphi_\eta(\delta)\bigr),
    \qquad x\in[\delta,1],
  \end{equation}
  with $\eta<\delta$ and we notice that $h_\eta$ is positive, increasing, convex,
  $h_\eta(\delta)=0$, $h_\eta(1) = c$ and $h_\eta\to h$ in $C^1([\delta,1])$
  as $\eta\uparrow\delta$. Moreover,
  \[
    x h_\eta' - \lambda^2 h_\eta
      \geq \frac{c\lambda^2\delta}{1-\varphi_\eta(\delta)}\,\frac{(\delta-\eta)^{\lambda^2-1}}{(1-\eta)^{\lambda^2}}
      = c_{\delta-\eta}
      >0.
  \]
  With this inequality in hands, we proceed with the estimate of $\vf_n^1\cdot\vec{n}_5$.
  On the border with normal $\vec{n}_5 = (-h_\eta'(U_n), 1)$ we have $U_n\in[\delta,1]$
  and $U_{n+1} = h_\eta(U_n)$, hence
  \begin{equation}\label{e:b15}
    \begin{aligned}
      \vf_n^1\cdot\vec{n}_5
        & = \lambda^{2-\epsilon} P_{n+1}^1 - h_\eta'(U_n)P_n^1\\
        &\geq \lambda_n^\epsilon\bigl[\nu\bigl(h_\eta'(U_n)U_n - \lambda^2 h_\eta(U_n)\bigr)
                - 4a_0\lambda^{\beta-2+\epsilon}\epsilon_n - 4a_0\lambda^{2-\beta-\epsilon}h'(U_n)\epsilon_n\bigr]\\
        &\geq \lambda_n^\epsilon\bigl(\nu c_{\delta-\eta} - a_0 c(\beta,\epsilon,\delta)\epsilon_n\bigr),
    \end{aligned}
  \end{equation}
  since $h_\eta'\leq c\lambda^2 /(1-2\delta)$.

  \emph{The vector field $\vf^2$}. 
  The fact that $\vf^2_n\cdot\vec{n}_3$ and $\vf^2_n\cdot\vec{n}_4$ are positive
  follows immediately from \cite{BarMorRom10}, since in those computations the
  boundary function $h$ is not involved. As it regards $\vf^2_n\cdot\vec{n}_2$
  and $\vf^2_n\cdot \vec{n}_5$, it is easy to see that they are both
  continuous functions of $h$ and $h'$ and with the values of the parameters
  chosen in \cite{BarMorRom10} (as well as here), both scalar products have
  positive minima. Hence the same is true if we replace $h$ with $h_\eta$,
  for $\eta$ small enough, since $h_\eta\to h$ in $C^1([\delta,1])$.

  \emph{The vector field $\vf^0$}. Using~\eqref{e:uvbounds} we have
  that
  \[
    \begin{aligned}
      |P_n^0|
        &\leq a_0\nu\lambda_n^{2\epsilon}\epsilon_n + \lambda^{\beta-4+2\epsilon} V_{n-1}^2 + \lambda^{2-\beta-\epsilon} V_n V_{n+1}\\
        &\leq \lambda_n^\epsilon\bigl[a_0\nu\lambda_n^\epsilon\epsilon_n + 4a_0^2(\lambda^{\beta-4} + \lambda^{2-\beta})\lambda_n^\epsilon\epsilon_{n-1}^2\bigr]
    \end{aligned}
  \]
  and the quantity above can be made a small fraction of $\lambda_n^\epsilon$
  if $a_0$ is small enough. This ensures that together $\vf_n^1$ and $\vf_n^0$
  point inward, since the lower bounds for $\vf_n^1$ in formulae \eqref{e:b12},
  \eqref{e:b13}, \eqref{e:b14}, \eqref{e:b15} are strictly positive and
  independent of $n$, for a suitable choice of $a_0$ (up to the common
  multiplicative factor $\lambda_n^\epsilon$).

  The proof we have given (due to the choice of the numbers $m$, $\theta$, $\delta$)
  works for $\beta\leq\tfrac52$, hence we can consider $\beta_c$ slightly larger
  than $\tfrac52$. A larger value of $\beta_c$ may be considered (see 
  \cite[Remark 2.2]{BarMorRom10}).
\end{proof}
\section{The blow--up time}\label{s:bu_abstract}

In this section we analyse with more details the blow--up time introduced
in Definition~\ref{d:strong}. We give some general results which hold beyond
the dyadic model \eqref{e:dyadic} under examination and which will be
used in the next section to prove that blow--up happens with probability $1$.
Example~\ref{x:onedim} shows that the fact that blow--up happens almost
surely is a property that in general depends on the structure of the drift
and hence strongly motivates our analysis.

Thus, consider the local strong solution $(X(\cdot;x), \tau_x)_{x\in\mathcal{W}}$
of a stochastic equation (in finite or infinite dimension) on the state
space $\mathcal{W}$, where $\mathcal{W}$ is a separable Hilbert space.
By comparing with our case, we assume that
\begin{itemize}
  \item $\Prob[\tau_x>0]=1$ for all $x\in\mathcal{W}$,
  \item $X(\cdot;x)$ is continuous for $t<\tau_x$ with values in $\mathcal{W}$,
  \item $X(\cdot;x)$ is the maximal local solution, namely either $\tau_x=\infty$
    or $\|X(t;x)\|_{\mathcal{W}}\to\infty$ as $t\uparrow\infty$, $\Prob$--{a.~s.},
  \item $(X(\cdot;x), \tau_x)_{x\in\mathcal{W}}$ is Markov (in the sense
    given in Theorem~\ref{t:strong}),
  \item all martingale solutions coincide with the strong solution up to the
    blow--up time $\tau_x$.
\end{itemize}
The last statement plainly implies that the occurrence of blow--up is an intrinsic
property of the local strong solution and does not depend in an essential way
from weak solutions.

Define for $x\in\mathcal{W}$ and $t\geq0$,
\[
  \flat(t,x)
    = \Prob[\tau_x>t],
\]
then we know that $\flat(0,x)=1$ and $\flat(\cdot,x)$ is non--increasing.
Set
\[
  \flat(x)
    = \inf_{t\geq0} \flat(t,x) = \Prob[\tau_x=\infty].
\]
We prove a dichotomy for the supremum of $\flat$, namely this quantity
is either equal to $0$ or to $1$.
\begin{lemma}\label{l:solotre}
  Consider the family of processes $(X, \tau)$ on $\mathcal{W}$ as above.
  If there is $x_0\in\mathcal{W}$ such that $\Prob[\tau_{x_0}=\infty]>0$,
  then
  \[
    \sup_{x\in\mathcal{W}}\Prob[\tau_x=\infty]
      = 1
  \]
\end{lemma}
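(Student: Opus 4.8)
The plan is to exploit the Markov property together with the fact that $\flat(x)=\Prob[\tau_x=\infty]$ is, in a suitable sense, lower semicontinuous in the trajectory, so that a single point with positive survival probability forces the supremum to be $1$. Concretely, I would argue by contradiction: suppose $S:=\sup_{x}\flat(x)<1$, while there exists $x_0$ with $\flat(x_0)>0$. The aim is to manufacture, from $x_0$, a sequence of states whose survival probabilities approach $1$, contradicting $S<1$.

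First I would record the key Markovian identity. Using the Markov property at a deterministic time $t$ on the event $\{\tau_{x_0}>t\}$, one writes
\[
  \flat(x_0)
    = \Prob[\tau_{x_0}=\infty]
    = \E\bigl[\uno_{\{\tau_{x_0}>t\}}\,\flat(X(t;x_0))\bigr]
    \leq S\,\Prob[\tau_{x_0}>t]
    = S\,\flat(t,x_0).
\]
Since this holds for every $t\geq0$ and $\flat(t,x_0)\downarrow\flat(x_0)$ as $t\to\infty$, letting $t\uparrow\infty$ gives $\flat(x_0)\leq S\,\flat(x_0)$. Because $\flat(x_0)>0$ we may divide through and obtain $1\leq S$, i.e. $S\geq1$, hence $S=1$. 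This is the heart of the argument: blow-up being an intrinsic property of the strong solution (the last bullet in the setup) is exactly what makes $\flat(X(t;x_0))$ a well-defined measurable functional of the state, so that conditioning on $\field_t$ and applying the tower property is legitimate.

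The step I expect to require the most care is the measurability and integrability justification for the identity above, rather than the final division. One must check that $x\mapsto\flat(x)$ is measurable (so that $\flat(X(t;x_0))$ is a genuine random variable), that $X(t;x_0)$ is well-defined and lies in $\mathcal{W}$ on the event $\{\tau_{x_0}>t\}$ (which holds by continuity of the strong solution up to $\tau_{x_0}$), and that the Markov property may be applied on the (non-trivial) event $\{\tau_{x_0}>t\}\in\field_t$. The bound $\flat(X(t;x_0))\leq S$ is immediate from the definition of $S$ once $X(t;x_0)\in\mathcal{W}$, and the indicator keeps everything bounded by $1$, so dominated convergence handles the passage $t\uparrow\infty$ without difficulty. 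The only genuinely delicate point is ensuring the Markov identity is applied with the extended state space $\mathcal{W}'$ in mind: on $\{\tau_{x_0}\leq t\}$ the process has reached the terminal state \terminal and contributes nothing to $\flat(x_0)$, which is precisely why only the indicator $\uno_{\{\tau_{x_0}>t\}}$ appears. With these points in place the dichotomy follows, and I would remark that the contrapositive form (either $\flat\equiv0$ or $\sup\flat=1$) is what gets used in the recurrence argument of the next section.
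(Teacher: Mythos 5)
Your proposal is correct and follows essentially the same argument as the paper: both rest on the Markov identity $\flat(x_0)=\E[\uno_{\{\tau_{x_0}>t\}}\flat(X(t;x_0))]$, bound the integrand by the supremum to get $\flat(x_0)\leq S\,\flat(t,x_0)$, and let $t\uparrow\infty$ to conclude $S\geq1$ from $\flat(x_0)>0$. The only cosmetic differences are your contradiction framing (the argument is in fact direct) and that the paper derives the key identity by first writing $\flat(t+s,x)=\E[\uno_{\{\tau_x>t\}}\flat(s,X(t;x))]$ and applying monotone convergence in $s$, a step you state directly.
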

\begin{proof}
  By the Markov property,
  \[
    \flat(t+s,x)
      = \Prob[\tau_x>t+s]
      = \E[\uno_{\{\tau_x>t\}}\flat(s,X(t;x))]
  \]
  and in the limit as $s\uparrow\infty$, by monotone convergence,
  \begin{equation}\label{e:solotre}
    \flat(x)
      = \E[\uno_{\{\tau_x>t\}}\flat(X(t;x))].
  \end{equation}
  Set $c=\sup\flat(x)$, then by the above formula,
  \[
    \flat(x_0)
      =    \E[\uno_{\{\tau_{x_0}>t\}}\flat(X(t;x_0))]
      \leq c\E[\uno_{\{\tau_{x_0}>t\}}]
      =    c \flat(t,x_0),
  \]
  and, as $t\uparrow\infty$, we get $\flat(x_0)\leq c \flat(x_0)$, that is
  $c\geq1$, hence $c=1$.
\end{proof}
\begin{remark}
  If we have the additional information that there is $x_0\in\mathcal{W}$
  such that $\flat(x_0) = 1$, then something more can be said. Indeed,
  $\uno_{\{\tau_{x_0}>t\}} = 1$ almost surely, and, using again
  formula~\eqref{e:solotre},
  \[
    \E[\flat(X(t;x_0))]
      = \E[\uno_{\{\tau_{x_0}>t\}}\flat(X(t;x_0))]
      = \flat(x_0)
      = 1,
  \]
  hence $\flat(X(t;x_0)) = 1$, almost surely for every $t>0$.
  This is very close to prove that $\flat\equiv0$, and in fact
  \cite[Theorem 6.8]{FlaRom08} proves, although with a completely
  different approach, that under the assumptions of
  \begin{itemize}
    \item (suitable) strong Feller regularity,
    \item \emph{conditional irreducibility}, namely for every
      $x\in\mathcal{W}$, $t>0$ and every open set $A$ in
      $\mathcal{W}$, $\Prob[X(t;x)\in A,\tau_x>t]>0$,
  \end{itemize}
  $\flat(x_0)=1$ implies that $\flat\equiv1$ on $\mathcal{W}$.
\end{remark}
\begin{proposition}\label{p:recurrent}
  Consider the family $(X,\tau)$ of processes as above. Assume
  that, given $x\in\mathcal{W}$, there are a closed set
  $B_\infty\subset\mathcal{W}$ with non--empty interior
  and three numbers $p_0\in(0,1)$, $T_0>0$ and $T_1>0$
  such that
  \begin{itemize}
    \item $\Prob[\sigma_{B_\infty}^{x,T_1}=\infty,\tau_x=\infty]=0$,
    \item $\Prob[\tau_y\leq T_0]\geq p_0$ for every $y\in B_\infty$,
  \end{itemize}
  where the (discrete) hitting time $\sigma_{B_\infty}^{x,T_1}$ of
  $B_\infty$, starting from $x$, is defined as
  \[
    \sigma_{B_\infty}^{x,T_1}
      = \min\{k\geq0: X(kT_1;x)\in B_\infty\},
  \]
  and $\sigma_{B_\infty}^{x,T_1} = \infty$ if the set is empty.
  Then
  \[
    \Prob[\tau_x<\infty]\geq\frac{p_0}{1+p_0}.
  \]
\end{proposition}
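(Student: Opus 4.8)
The plan is to convert the two hypotheses into a single self--referential inequality for the quantity $p=\Prob[\tau_x<\infty]$, from which the claim follows by elementary algebra. Concretely, I would bound $p$ from below by the probability of the explicit scenario ``the discrete chain $(X(kT_1;x))_{k\geq0}$ first reaches $B_\infty$ at some finite step $\sigma=\sigma_{B_\infty}^{x,T_1}$, and thereafter the solution blows up within $T_0$ units of time''. This scenario lies inside $\{\tau_x<\infty\}$, so the whole problem reduces to estimating
\[
  \Prob[\tau_x<\infty]
    \geq \sum_{k\geq0}\Prob[\sigma=k,\ \tau_x\leq kT_1+T_0],
\]
where the events on the right are disjoint.

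I would estimate the right--hand side using the two hypotheses in turn. The blow--up hypothesis enters through a restart at the hitting instant: since $\sigma T_1$ takes values in the discrete set $\{kT_1:k\geq0\}$, it is enough to work on each $\{\sigma=k\}\in\field_{kT_1}$ and apply the Markov property at the \emph{fixed} time $kT_1$ (so the strong Markov property is not needed, consistently with the remark after Theorem~\ref{t:strong}). Writing $g(y)=\Prob[\tau_y\leq T_0]$, on $\{\sigma=k\}$ the solution is alive and $X(kT_1;x)\in B_\infty$, whence $g(X(kT_1;x))\geq p_0$ and
\[
  \Prob[\sigma=k,\ \tau_x\leq kT_1+T_0]
    = \E\bigl[\uno_{\{\sigma=k\}}\,g(X(kT_1;x))\bigr]
    \geq p_0\,\Prob[\sigma=k].
\]
Summing over $k$ gives $\Prob[\tau_x<\infty]\geq p_0\,\Prob[\sigma<\infty]$. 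The recurrence hypothesis $\Prob[\sigma=\infty,\tau_x=\infty]=0$ then supplies the missing lower bound
\[
  \Prob[\sigma<\infty]
    \geq \Prob[\sigma<\infty,\ \tau_x=\infty]
    = \Prob[\tau_x=\infty]
    = 1-p,
\]
so that $p\geq p_0(1-p)$, i.e. $p(1+p_0)\geq p_0$, which is exactly $\Prob[\tau_x<\infty]\geq p_0/(1+p_0)$.

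The one step needing genuine care is the restart. I must check that ``$X(kT_1;x)\in B_\infty$'' forces the solution to be alive at $kT_1$ (true because $B_\infty\subset\mathcal{W}$ while a dead trajectory sits at the isolated state \terminal), that $\{\sigma=k\}$ is $\field_{kT_1}$--measurable, and that, on $\{\tau_x>kT_1\}$, the conditional probability given $\field_{kT_1}$ of blowing up during $(kT_1,kT_1+T_0]$ equals $g(X(kT_1;x))$ by the Markov property; everything else is bookkeeping. I would also note that the same decomposition applied to $\{\tau_x=\infty\}=\{\tau_x=\infty,\ \sigma<\infty\}$, using $\flat(y)=\Prob[\tau_y=\infty]\leq 1-p_0$ on $B_\infty$, yields the sharper bound $\Prob[\tau_x<\infty]\geq p_0$; but the stated constant $p_0/(1+p_0)$ is all that is required later.
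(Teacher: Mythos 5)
Your proof is correct, and all the delicate points are handled properly: $\{\sigma=k\}\in\field[B]_{kT_1}$, the fact that $X(kT_1;x)\in B_\infty\subset\mathcal{W}$ forces the solution to be alive at $kT_1$ (dead paths sit at the cemetery state), and the restart only needs the Markov property at the deterministic times $kT_1$, which is exactly the form of Markovianity the paper provides. The route differs from the paper's in its bookkeeping, and in an instructive way. The paper argues by \emph{upper} bounds on survival at finite horizons: it writes $\Prob[\tau_x>n+T_0]\leq(1-p_0)\Prob[\sigma\leq n]+\Prob[\sigma>n]=1-p_0\Prob[\sigma\leq n]$, then lower bounds $\Prob[\sigma\leq n]$ by $\Prob[\sigma\leq n\,|\,\tau_x>n]\Prob[\tau_x>n]$ and passes to the limit $n\to\infty$, using the first hypothesis to show $\Prob[\sigma\leq n\,|\,\tau_x>n]\to1$; this requires a preliminary case distinction (if $\Prob[\tau_x=\infty]=0$ there is nothing to prove, otherwise $\Prob[\tau_x>n]>0$ so the conditioning is legitimate), and in the limit yields the self--referential inequality $c\leq 1-p_0c$ for $c=\Prob[\tau_x=\infty]$. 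Your argument instead lower bounds $\Prob[\tau_x<\infty]$ directly by the disjoint union $\bigcup_k\{\sigma=k,\ \tau_x\leq kT_1+T_0\}$, which eliminates both the limiting procedure and the case distinction, and closes the loop with $\Prob[\sigma<\infty]\geq\Prob[\sigma<\infty,\tau_x=\infty]=\Prob[\tau_x=\infty]$. Your concluding remark is also right and worth noting: decomposing $\{\tau_x=\infty\}$ itself over $\{\sigma=k\}$ and using $\flat(y)\leq1-p_0$ on $B_\infty$ gives $\Prob[\tau_x=\infty]\leq(1-p_0)\Prob[\sigma<\infty]\leq 1-p_0$, i.e.\ the sharper bound $\Prob[\tau_x<\infty]\geq p_0$; the paper's constant $p_0/(1+p_0)$ is weaker because the term $\Prob[\sigma>n]$ in its decomposition is retained with coefficient $1$ rather than being killed by the recurrence hypothesis. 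For the application (Corollary~\ref{c:recurrent} and Lemma~\ref{l:solotre}) any bound uniformly bounded away from $0$ suffices, so both constants serve equally well.
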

\begin{remark}\label{r:recurrent}
  The first condition in the above proposition can be interpreted as
  a conditional recurrence: knowing that the solution does not explode,
  it will visit $B_\infty$ in a finite time with probability $1$.
\end{remark}
\begin{proof}
  The first assumption says that $\Prob[\sigma_{B_\infty}^{x,T_1}>n,\tau_x>n]\downarrow0$
  as $n\to\infty$. If $\Prob[\tau_x=\infty]=0$, then
  $\Prob[\tau_x<\infty]=1\geq\tfrac{p_0}{1+p_0}$ and there is nothing
  to prove. If on the other
  hand $\Prob[\tau_x=\infty]>0$, then $\Prob[\tau_x>n]>0$ for all $n\geq1$
  and, since $\Prob[\tau_x>n]\downarrow\Prob[\tau_x=\infty]$ as
  $n\to\infty$, we have that
  \[
    \Prob[\sigma_{B_\infty}^{x,T_1}\leq n|\tau_x>n]
      = 1 - \frac{\Prob[\sigma_{B_\infty}^{x,T_1}>n,\tau_x>n]}{\Prob[\tau_x>n]}
      \longrightarrow1,
        \qquad
      n\to\infty.
  \]
  For $n\geq1$,
  \[
    \Prob[\tau_x>n+T_0]
      \leq \Prob[\tau_x>n+T_0,\ \sigma_{B_\infty}^{x,T_1}\leq n]
           + \Prob[\sigma_{B_\infty}^{x,T_1}>n].
  \]
  The strong solution is Markov, hence
  \[
    \begin{aligned}
      \Prob[\tau_x>n+T_0,\ \sigma_{B_\infty}^{x,T_1}\leq n]
        &= \sum_{k=0}^n \Prob[\tau_x>n+T_0,\ \sigma_{B_\infty}^{x,T_1} = k]\\
        &\leq (1-p_0)\Prob[\sigma_{B_\infty}^{x,T_1}\leq n].
    \end{aligned}
  \]
  In conclusion
  \[
    \begin{aligned}
      \Prob[\tau_x>n+T_0]
        &\leq (1-p_0)\Prob[\sigma_{B_\infty}^{x,T_1}\leq n] + \Prob[\sigma_{B_\infty}^{x,T_1}>n]\\
        &=    1 - p_0\Prob[\sigma_{B_\infty}^{x,T_1}\leq n]\\
        &\leq 1 - p_0\Prob[\sigma_{B_\infty}^{x,T_1}\leq n|\tau_x>n]\Prob[\tau_x>n]
    \end{aligned}
  \]
  and in the limit as $n\to\infty$,
  \[
    \Prob[\tau_x=\infty]
      \leq 1 - p_0\Prob[\tau_x=\infty],
  \]
  that is $\Prob_x[\tau_x=\infty]\leq\tfrac1{1+p_0}$.
\end{proof}
\begin{corollary}\label{c:recurrent}
  Assume that there are $p_0\in(0,1)$, $T_0>0$ and
  $B_\infty\subset\mathcal{W}$ such that the assumptions
  of the previous proposition hold for every $x\in\mathcal{W}$
  (the time $T_1$ may depend on $x$).
  Then for every $x\in\mathcal{W}$,
  \[
    \Prob[\tau_x<\infty] = 1.
  \]
\end{corollary}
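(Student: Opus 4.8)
The plan is to combine the uniform lower bound on the blow--up probability supplied by Proposition~\ref{p:recurrent} with the Markov renewal identity~\eqref{e:solotre} already established in the proof of Lemma~\ref{l:solotre}. The key observation is that the hypotheses of the corollary (validity of the proposition's assumptions for every $x\in\mathcal{W}$, with the \emph{same} constant $p_0$) force the bound coming from the proposition to hold uniformly in the initial datum, and it is precisely this uniformity that upgrades ``$\tau_x<\infty$ with probability bounded away from $0$'' to ``$\tau_x<\infty$ almost surely''.

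Concretely, I would first invoke Proposition~\ref{p:recurrent}: since its assumptions hold for each $x$, we obtain
\[
  \flat(x) = \Prob[\tau_x=\infty] \leq \frac1{1+p_0} =: q < 1,
    \qquad\text{for every }x\in\mathcal{W}.
\]
Next I would recall the identity~\eqref{e:solotre}, valid for every $t>0$,
\[
  \flat(x) = \E\bigl[\uno_{\{\tau_x>t\}}\,\flat(X(t;x))\bigr].
\]
Since $\flat(X(t;x))\leq q$ pointwise by the uniform bound above, this yields
\[
  \flat(x) \leq q\,\E\bigl[\uno_{\{\tau_x>t\}}\bigr] = q\,\flat(t,x).
\]
Finally, letting $t\uparrow\infty$ and using $\flat(t,x)\downarrow\flat(x)$ (monotone convergence, exactly as in Lemma~\ref{l:solotre}) gives $\flat(x)\leq q\,\flat(x)$; since $q<1$ this forces $\flat(x)=0$, that is $\Prob[\tau_x<\infty]=1$.

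There is essentially no analytic obstacle here: the heavy lifting has already been done in Proposition~\ref{p:recurrent}, and the corollary is a short self--improving bootstrap. The one point that must be handled with care is the uniformity of the bound $\flat\leq q$ over all of $\mathcal{W}$, which is exactly why the corollary strengthens the proposition's per--$x$ hypothesis to one holding simultaneously for all $x$ with a common $p_0$; without this uniformity, the pointwise estimate $\flat(X(t;x))\leq q$ under the expectation would fail and the self--improving inequality $\flat(x)\leq q\,\flat(x)$ could not be closed. I would also note that, although the proposition's bound is derived using $T_0$ and $T_1$, the resulting estimate $\flat\leq 1/(1+p_0)$ depends only on $p_0$, so it suffices that $p_0$ be the same for all $x$, and the permitted $x$--dependence of $T_1$ is immaterial.
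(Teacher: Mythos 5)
Your proof is correct and follows essentially the same route as the paper: both combine the uniform bound $\sup_{x}\Prob[\tau_x=\infty]\leq 1/(1+p_0)<1$ from Proposition~\ref{p:recurrent} with the Markov renewal identity~\eqref{e:solotre}. The only difference is presentational: the paper invokes the dichotomy of Lemma~\ref{l:solotre} as a black box (its contrapositive gives $\flat\equiv0$ immediately), whereas you inline the final bootstrap step of that lemma's proof, which amounts to the same computation.
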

\begin{proof}
  The previous proposition yields
  \[
    \sup_{x\in\mathcal{W}}\Prob[\tau_x=\infty]
      \leq\frac{1}{1+p_0}
      <1,
  \]  
  hence the dichotomy of Lemma~\ref{l:solotre} immediately implies
  that $\Prob[\tau_x<\infty] = 1$ for every $x\in\mathcal{W}$.
\end{proof}
\begin{example}\label{x:onedim}
  Here we analyse a simple one dimensional example that shows that the fact
  that blow--up occurs with full probability does depend on the structure
  of the drift. We remark that the one dimensional case is fully understood
  (see for instance \cite{KarShr91}), our proofs below are elementary and
  mimic the proofs of the next section.

  Consider the one--dimensional SDEs
  \[
    dX = f_i(X)\,dt + dW,
      \qquad i=1,2,
  \]
  with initial condition $X(0) = x\in\R$, where 
  \[
    f_1(x) = 
      \begin{cases}
        x^2, &\quad x\geq0,\\
        x,   &\quad x<0,
      \end{cases}
        \qquad\quad
    f_2(x) = 
      \begin{cases}
        x^2, &\quad x\geq0,\\
        -x,  &\quad x<0.
      \end{cases}
  \]
  It is easy to see (using the Feller test, see for instance \cite[Proposition 5.22]{KarShr91})
  that with the drift $f_1$ the blow--up function $\flat$ defined in the
  proof of Lemma~\ref{l:solotre} verifies $0<\flat(x)<1$, while with the
  drift $f_2$ the blow--up function is $\flat(x)\equiv0$.
  
  In view of the results proved above and the analysis of the next section
  (see Theorem~\ref{t:blowup}), we observe that
  \begin{itemize}
    \item with both drifts $f_1,f_2$ and with $B_\infty = \{x\geq1\}$,
      there are $p_0\in(0,1)$ and $T_0>0$ such that
      $\Prob[\tau_x\leq T_0]\geq p_0$ for all $x\in B_\infty$ (in
      other words, the second assumption of Proposition \ref{p:recurrent}
      holds),
    \item the first assumption of Proposition \ref{p:recurrent} holds for
      $f_2$ but \emph{not} for $f_1$,
    \item in both cases $\E\bigl[\sup_{[0,T]}(X_n)_-^p\bigr]<\infty$
      for all $T>0$ and $p\geq1$.
  \end{itemize}
  We give an elementary proof of the first statement, which is modeled on
  the proof which will be used in the next section for the blow--up of
  the infinite--dimensional problem~\eqref{e:dyadic}. Given an initial
  condition $x\in [1,\infty)$, we prove that
  \[
    \Prob\Bigl[\Bigl\{\sup_{t\in [0,2]}|W_t|\leq\frac14\Bigr\}\cap\{\tau_x>2\}\Bigr]=0.
  \]
  Indeed, set $Y_t = X_t - W_t$, so that $Y_0=x$ and
  \[
    dY
      = dX - dW
      = (Y + W)^2,
  \]
  in particular $Y_t\geq1$. On the event $\{\sup_{t\in [0,2]}|W_t|\leq\tfrac14\}$,
  \[
    \dot Y
      \geq Y^2 - 2|W|Y
      \geq Y(Y-\frac12)
      \geq \frac12Y^2,
  \]
  hence by comparison $Y_t$ (and hence $X_t$) explodes before time $\tfrac{2}{x}\leq 2$.
\end{example}
\section{Blow--up for \texorpdfstring{$\beta>3$}{beta>3}}\label{s:blowup}

In this section we first prove that there are sets in the state space
which lead to a blow--up with positive probability. The underlying idea
is to use again Lemma~\ref{l:crucial} to adapt the estimates of
\cite{Che08}, which work only for positive solutions.

In the second part of the section we show that such sets are achieved
with full probability when the blow--up time is conditioned to be infinite.
The general result of the previous section immediately implies that
blow--up occurs with full probability
\subsection{Blow--up with positive probability}

Given $\alpha>\beta-2$, $p\in(0,\beta-3)$, $a_0>0$ and $M_0>0$, define the set
\begin{equation}\label{e:bu_set}
  B_\infty(\alpha,p,a_0,M_0)
    = \{x\in V_\alpha: \|x\|_p\geq M_0\text{{} and {}}
                       \inf_{n\geq1}\bigl(\lambda_{n-1}^{\beta-2}x_n\bigr)\geq -\nu a_0\}.
\end{equation}
We will show that for suitable values of $a_0$ and $M_0$ there is a positive
probability that the process solution of \eqref{e:dyadic} with initial condition
in the above set blows up in finite time.
\begin{theorem}\label{t:blowup}
  Let $\beta>3$ and assume \eqref{e:noise} and \eqref{e:noise2}.
  Given $\alpha\in(\beta-2,\alpha_0+1)$, $p\in(0,\beta-3)$, and
  $a_0\in(0,\tfrac14]$, there exist $p_0>0$, $T_0>0$ and $M_0>0$
  such that for each $x\in B_\infty(\alpha,p,a_0,M_0)$ and for every
  energy martingale weak solution $\Prob_x$ with initial condition $x$,
  \[
    \Prob_x[\tau_\infty^\alpha\leq T_0]
      \geq p_0.
  \]
\end{theorem}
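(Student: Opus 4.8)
The plan is to isolate a good event of positive probability on which the noise is so small that the quasi--positivity of Lemma~\ref{l:crucial} renders the solution essentially positive, and then to run on it the deterministic energy cascade of \cite{Che08}, which produces blow--up only for positive data.

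First I fix $T_0>0$ (for definiteness $T_0=1$) and set $c_0=\tfrac13 a_0$. Since $a_0\leq\tfrac14$ and the exponent $\tfrac12(\alpha+2-\beta)$ in \eqref{e:crucial} vanishes for the weight $\beta-2$, condition~\eqref{e:crucial} holds with $n_0=1$ for the pair $(a_0,c_0)$ and that weight. With $Z$ the Ornstein--Uhlenbeck process \eqref{e:stokes}, I define the good event
\[
  G = \{N_{\beta-2,c_0}(T_0) = 1\},
\]
so that on $G$ one has $|Z_n(s)|\leq c_0\nu\lambda_{n-1}^{-(\beta-2)}$ for all $n\geq1$ and $s\in[0,T_0]$. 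By Lemma~\ref{l:moments_N} the probability $\Prob[N_{\beta-2,c_0}(T_0)=1]$ is strictly positive, and since $G$ is an event measurable with respect to the driving noise it carries the same value $p_0:=\Prob[G]>0$ under every energy martingale solution, uniformly in the initial datum $x\in B_\infty(\alpha,p,a_0,M_0)$.

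I then argue by contradiction, assuming $\Prob_x[G\cap\{\tau_\infty^\alpha>T_0\}]>0$. On that event Theorem~\ref{t:markov} identifies the canonical process with the regular strong solution on $[0,T_0]$, so $Y=\xi-Z$ is the (Galerkin--limit) solution of \eqref{e:v}. Because $x$ satisfies $\lambda_{n-1}^{\beta-2}x_n\geq-a_0\nu$ for all $n$, Lemma~\ref{l:crucial} (with weight $\beta-2$ and $N_{\beta-2,c_0}(T_0)=1$) gives $Y_n(t)\geq -a_0\nu\lambda_{n-1}^{-(\beta-2)}$, hence the quasi--positivity bound $X_n(t)\geq -(a_0+c_0)\nu\lambda_{n-1}^{-(\beta-2)}$ for all $n\geq1$ and $t\in[0,T_0]$. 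On this quasi--positive solution I reproduce the cascade of \cite{Che08}, working path--wise with the random ODE~\eqref{e:v}. For the noiseless positive equation the weighted energy $\phi=\|X\|_p^2=\sum_n\lambda_n^{2p}X_n^2$ satisfies, after reindexing the nonlinear terms,
\[
  \tfrac12\dot\phi
    = -\nu\|X\|_{p+1}^2 + (\lambda^{2p}-1)\sum_n\lambda_n^{2p+\beta}X_n^2X_{n+1},
\]
and the cubic sum is nonnegative. In the present setting I track instead $\|Y\|_p^2$ along \eqref{e:v}; writing $X=Y+Z$ this reproduces the same identity up to remainders involving $Z$, and the contributions of the modes with $X_{n+1}<0$, together with these remainders, are bounded below by a small multiple of $\nu\|Y\|_{p+1}^2$ plus fixed convergent series, thanks to the quasi--positivity bound and to $c_0\leq a_0\leq\tfrac14$. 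Since $\beta>3$ and $p<\beta-3$ the cubic gain outweighs the viscous dissipation once $\|Y\|_p$ is large, and I obtain a Riccati--type inequality $\dot\phi\geq c\,\phi^{1+\theta}$ with $c,\theta>0$ depending only on $\nu$, $\beta$, $p$ and $a_0$. As $Z(0)=0$ gives $\phi(0)=\|x\|_p^2\geq M_0^2$, choosing $M_0$ large enough that the comparison ODE $\dot\psi=c\,\psi^{1+\theta}$ with $\psi(0)=M_0^2$ explodes strictly before $T_0$ forces $\|Y(t)\|_p\to\infty$, hence $\|X(t)\|_\alpha\to\infty$ (recall $p<\alpha$ and $Z$ is bounded in $V_\alpha$ on $G$), before $T_0$. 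This contradicts $\tau_\infty^\alpha>T_0$, so $G\subset\{\tau_\infty^\alpha\leq T_0\}$ up to a null set and $\Prob_x[\tau_\infty^\alpha\leq T_0]\geq\Prob_x[G]=p_0$.

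The main obstacle is the passage to the Riccati inequality: bounding the nonlinear gain below by a superlinear power of $\phi$ while keeping the small negative corrections (from $Z$, controlled on $G$, and from the negative tail of $Y$, controlled by Lemma~\ref{l:crucial}) genuinely lower order relative to the viscous term. This is exactly where the restriction $p<\beta-3$ and the largeness of $M_0$ enter, and it is the adaptation of the positive--solution estimates of \cite{Che08} to the quasi--positive regime.
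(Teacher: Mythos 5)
Your probabilistic skeleton is exactly the paper's: isolate the positive--probability event $\{N_{\cdot,c_0}(T_0)=1\}$ via Lemma~\ref{l:moments_N}, note that its probability depends only on the law of $Z$ (hence is uniform over initial data and over martingale solutions), use Lemma~\ref{l:crucial} on its intersection with $\{\tau_\infty^\alpha>T_0\}$ to get quasi--positivity, and show that this intersection is null by running a Cheskidov--type argument path--wise on \eqref{e:v}. The gap is in the deterministic core. The functional you differentiate, $\phi=\|Y\|_p^2$, cannot satisfy any inequality of the form $\dot\phi\geq c\,\phi^{1+\theta}$: the nonlinear production term in your own identity is the \emph{off--diagonal} cubic sum $(\lambda^{2p}-1)\sum_n\lambda_n^{2p+\beta}X_n^2X_{n+1}$, and this is not bounded below by any function of $\phi$. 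Concretely, take the admissible datum $x$ with $x_{n_0}=M_0\lambda_{n_0}^{-p}$ and $x_n=0$ for $n\neq n_0$; it lies in $B_\infty(\alpha,p,a_0,M_0)$ for every $M_0$, yet at $t=0$ (where $Z=0$, $Y=x$) every product $X_n^2X_{n+1}$ vanishes, so $\dot\phi(0)=-2\nu\|x\|_{p+1}^2<0$ no matter how large $M_0$ is. The statement that ``the cubic gain outweighs the viscous dissipation once $\|Y\|_p$ is large'' is therefore false pointwise in time: energy must first spread to neighbouring modes, and a pure weighted norm cannot detect that transfer.

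This obstruction is precisely what Cheskidov's argument, and the paper's proof, are built to circumvent, and it is why the paper does \emph{not} use $\|Y\|_p^2$. The paper works with the shifted nonnegative variables $\eta_n=Y_n+a_0\nu\lambda_{n-1}^{2-\beta}$ (nonnegative by Lemma~\ref{l:crucial}, which also disposes of the sign--indefinite corrections you handle only heuristically) and with the functional
\[
  H(t)=\sum_{n=1}^\infty\lambda_n^{2p}\bigl(\eta_n^2+b\,\eta_n\eta_{n+1}\bigr),
\]
whose cross terms are the whole point: differentiating $b\,\eta_n\eta_{n+1}$ produces, through $\eta_n\dot\eta_{n+1}$, the \emph{diagonal} cubic terms $b\lambda_n^{\beta}\eta_n^3$, and after Young's inequality and the choice \eqref{e:choosea} of $b$ the off--diagonal cubic terms are absorbed, leaving $k_1\sum_n\lambda_n^{\beta+2p}\eta_n^3$. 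Only this diagonal sum controls a superlinear quantity: by H\"older and the hypothesis $p<\beta-3$ one has $\|\eta\|_{1+p}^2\leq\bigl(k_5^{-1}\sum_n\lambda_n^{\beta+2p}\eta_n^3\bigr)^{2/3}$, which yields $\dot H\geq k_1k_5\psi^{3/2}-(k_2+k_4)\psi-k_3\sqrt{\psi}$ with $\psi=\|\eta\|_{1+p}^2\geq H/k_6$, hence blow--up before $T_0$ once $H(0)\geq M_0^2$ is large. To repair your proof you must replace $\phi$ by such a cross--term functional (or otherwise quantify the energy transfer to neighbouring modes); as written, the passage to the Riccati inequality --- the step you yourself flag as the main obstacle --- is a genuine gap, not a technicality.
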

\begin{proof}
  Choose $c_0>0$ such that $c_0\leq a_0$ and $c_0\leq\sqrt{a_0}(1-\sqrt{a_0})$,
  and consider the random integer $N_{\alpha,c_0}(T_0)$ defined in \eqref{e:N},
  where the value of $T_0$ will be specified at the end of the proof. Set
  \[
    p_0
      = \Prob_x[N_{\alpha,c_0}(T_0)=1],
  \]
  we remark that $p_0>0$ by Lemma~\ref{l:moments_N} and its value depends only
  on the distribution of the solution of \eqref{e:stokes}, thus is independent
  from the martingale solution $\Prob_x$. The theorem will be proved if we show
  that
  \begin{equation}\label{e:bu_claim}
    \Prob_x[\tau_\infty^\alpha>T_0,\ N_{\alpha,c_0}(T_0)=1]
      = 0,
  \end{equation}
  indeed
  \[
    \Prob_x[\tau_\infty^\alpha\leq T_0]
      = 1 - \Prob_x[\tau_\infty^\alpha\leq T_0,\ N_{\alpha,c_0}(T_0)>1]
      \geq 1 - \Prob_x[N_{\alpha,c_0}(T_0)>1]
      = p_0.
  \]
  We proceed with the proof of \eqref{e:bu_claim} and we work path--wise on the
  event
  \[
    \Omega(\alpha,T_0)
      = \{\tau_\infty^\alpha>T_0\}\cap\{N_{\alpha,c_0}(T_0)=1\}.
  \]
  Let $Z$ be the solution of \eqref{e:stokes} and $Y = X - Z$ the solution of \eqref{e:v}.
  On the event $\{\tau_\infty^\alpha>T_0\}$ \eqref{e:v} has a unique solution on $[0,T_0]$,
  and on the event $\{N_{\alpha,c_0}(T_0)=1\}$ we have that $\lambda_{n-1}^{\beta-2}|Z_n(t)|\leq c_0$
  for every $t\in[0,T_0]$ and every $n\geq1$. Set
  \[
    \eta_n = X_n - Z_n + a_0\nu\lambda_{n-1}^{2-\beta},
  \]
  then by this position $\eta = (\eta_n)_{n\geq1}$ satisfies the system
  \[
    \begin{cases}
      \dot\eta_n
        = - \nu\lambda_n^2\eta_n
          + a_0\nu^2\lambda^2 \lambda_{n-1}^{4-\beta}
          + \lambda_{n-1}^\beta X_{n-1}^2
          - \lambda_n^\beta X_n X_{n+1},\\
      \eta_n(0) = X_n(0) + a_0\nu\lambda_{n-1}^{2-\beta}.
    \end{cases}
      \qquad
    n\geq1
  \]
  Moreover, by Lemma~\ref{l:crucial} (applied with the values of $a_0$ and $c_0$
  we have fixed), it turns out that $\eta_n(t;\omega)\geq0$ for each $t\in[0,T_0]$,
  $n\geq1$ and $\omega\in\Omega(\alpha,T_0)$.

  Fix a number $b>0$, which will be specified later in formula \eqref{e:choosea},
  then
  \[
    \begin{aligned}
      \frac{d}{dt}\bigl(\eta_n^2 + b\eta_n\eta_{n+1}\bigr)
        & =    - 2\nu\lambda_n^2\eta_n^2 - b\nu(1+\lambda^2)\lambda_n^2\eta_n\eta_{n+1}\\
        &\quad + a_0\lambda^2\nu^2(2 + b\lambda^{4-\beta})\lambda_{n-1}^{4-\beta}\eta_n + a_0 b\lambda^2\nu^2\lambda_{n-1}^{4-\beta}\eta_{n+1}\\
        &\quad + 2\lambda_{n-1}^\beta X_{n-1}^2\eta_n + b\lambda_{n-1}^\beta X_{n-1}^2 \eta_{n+1} + b\lambda_n^\beta X_n^2\eta_n\\
        &\quad - 2\lambda_n^\beta X_n X_{n+1}\eta_n - b\lambda_n^\beta X_n X_{n+1}\eta_{n+1} - b\lambda_{n+1}^\beta\eta_n X_{n+1}X_{n+2}\\
        & = \memo{1} + \memo{2} + \memo{3} + \memo{4}.
    \end{aligned}
  \]
  By the choice of $a_0$ and $c_0$, we have that $(a_0 + c_0)^2\leq a_0$, hence
  \[
    \memo{2}
      \geq \lambda^2\nu^2(a_0+c_0)^2(2 + b\lambda^{4-\beta})\lambda_{n-1}^{4-\beta}\eta_n
           + b\lambda^2\nu^2(a_0 + c_0)^2\lambda_{n-1}^{4-\beta}\eta_{n+1}.
  \]
  The third term on the right--hand side of the formula above can be estimated
  from below using Young's inequality as
  \[
    \begin{aligned}
      \memo{3}
        &\geq   2\lambda_{n-1}^\beta X_{n-1}^2\eta_n
              + b\lambda_n^\beta X_n^2\eta_n\\
        &\geq \bigl[(1+\lambda^{-2p})\lambda_{n-1}^\beta\eta_{n-1}^2\eta_n - 4\nu^2(a_0+c_0)^2\frac{\lambda^{2\beta+2p-4}}{\lambda^{2p}-1}\lambda_{n-1}^{4-\beta}\eta_n\bigr]\\
        &\quad + b\bigl[\lambda_n^\beta\eta_n^3
                     - 2\lambda^{\beta-2}\nu (a_0+c_0)\lambda_n^2\eta_n^2\bigr]
    \end{aligned}
  \]
  since $X_n = \eta_n - (a_0\nu\lambda_{n-1}^{2-\beta} - Z_n)$ and
  $0\leq (a_0\nu\lambda_{n-1}^{2-\beta} - Z_n)\leq \nu(a_0+c_0)\lambda_{n-1}^{2-\beta}$.
  Likewise, the fourth term can be estimated from below as
  \[
    \begin{aligned}
      \memo{4}
        &\geq - 2\lambda_n^\beta\eta_n^2\eta_{n+1}
             - b\lambda_n^\beta\eta_n\eta_{n+1}^2
             - b\lambda_{n+1}^\beta\eta_n\eta_{n+1}\eta_{n+2}\\
        &\quad - \nu^2(a_0+c_0)^2(2\lambda^2 + b\lambda^{6-\beta})\lambda_{n-1}^{4-\beta}\eta_n
             - b\lambda^2\nu^2(a_0+c_0)^2\lambda_{n-1}^{4-\beta}\eta_{n+1},
    \end{aligned}
  \]
  where we have dropped the positive terms, using again the fact that
  $(a_0\nu\lambda_{n-1}^{2-\beta} - Z_n)\geq0$.
  The three above estimates together yield
  \[
    \frac{d}{dt}\bigl(\eta_n^2 + b\eta_n\eta_{n+1}\bigr)
        + 2\nu\lambda_n^2\eta_n^2 + b\nu(1+\lambda^2)\lambda_n^2\eta_n\eta_{n+1}
      \geq A_n + B_n + C_n,
  \]
  where
  \[
    \begin{aligned}
      A_n &=   b\lambda_n^\beta\eta_n^3
             + (1+\lambda^{-2p})\lambda_{n-1}^\beta\eta_{n-1}^2\eta_n
             - 2\lambda_n^\beta\eta_n^2\eta_{n+1}
             - b\lambda_n^\beta\eta_n\eta_{n+1}^2\\
          &\quad - b\lambda_{n+1}^\beta\eta_n\eta_{n+1}\eta_{n+2},\\
      B_n &= -2b\lambda^{\beta-2}\nu(a_0 + c_0)\lambda_n^2\eta_n^2,\\
      C_n &= - 4\nu^2(a_0+c_0)^2\frac{\lambda^{2\beta+2p-4}}{\lambda^{2p}-1}\lambda_{n-1}^{4-\beta}\eta_n.
    \end{aligned}
  \]
  By using Young's inequality in the same way as \cite{Che08},
  \begin{equation}\label{e:Anineq}
    \begin{aligned}
      A_n
        &\geq b\lambda_n^\beta\eta_n^3
          + (1+\lambda^{-2p}) \lambda_{n-1}^\beta\eta_{n-1}^2\eta_n
          - 2\lambda_n^\beta\eta_n^2\eta_{n+1}
          - \frac12 b\lambda_n^\beta \bigl(\eta_n^2\eta_{n+1} + \eta_{n+1}^3\bigr)\\
        &\quad- \frac14 b\lambda_{n+1}^\beta\bigl(2\eta_n^2\eta_{n+1} + \eta_{n+1}^2\eta_{n+2} + \eta_{n+2}^3\bigr)\\
        & = b\lambda_n^\beta\eta_n^3
          - \frac12 b\lambda_n^\beta \eta_{n+1}^3
          - \frac14 b\lambda_{n+1}^\beta\eta_{n+2}^3
          + (1+\lambda^{-2p}) \lambda_{n-1}^\beta\eta_{n-1}^2\eta_n\\
      &\quad -\bigl(2+\frac12b+\frac12\lambda^\beta b\bigr)\lambda_n^\beta\eta_n^2\eta_{n+1}
          - \frac14 b\lambda_{n+1}^\beta \eta_{n+1}^2\eta_{n+2}
    \end{aligned}
  \end{equation}
  Summing up the above estimates yields
  \begin{multline*}
    \frac{d}{dt}\Bigl[\sum_{n=1}^\infty\lambda_n^{2p}\bigl(\eta_n^2 + b\eta_n\eta_{n+1}\bigr)\Bigr]
        + 2\nu\|\eta\|_{1+p}^2 + {}\\
        + b\nu(1+\lambda^2)\sum_{n=1}^\infty\lambda_n^{2+2p}\eta_n\eta_{n+1}
      \geq \sum_{n=1}^\infty \lambda_n^{2p} (A_n + B_n + C_n).
  \end{multline*}
  By using \eqref{e:Anineq} and shifting the summation index we have
  \[
    \begin{aligned}
      \sum_{n=1}^\infty \lambda_n^{2p} A_n
        &\geq    b\Bigl(1 - \frac12\lambda^{-(\beta+2p)} - \frac14\lambda^{-(\beta+4p)}\Bigr)\sum_{n=1}^\infty \lambda_n^{\beta+2p}\eta_n^3\\
        &\quad + \Bigl[\lambda^{2p} - 1 - b\Bigl(\frac12+\frac12\lambda^\beta+\frac14\lambda^{-2p}\Bigr)\Bigr]\sum_{n=1}^\infty\lambda_n^{\beta+2p}\eta_n^2\eta_{n+1}\\
        & = k_1 \sum_{n=1}^\infty \lambda_n^{\beta+2p}\eta_n^3,
    \end{aligned}
  \]
  where we have chosen $b$ so that
  \begin{equation}\label{e:choosea}
    \lambda^{2p} - 1 - b\Bigl(\frac12+\frac12\lambda^\beta+\frac14\lambda^{-2p}\Bigr)
      = 0.
  \end{equation}
  The other two terms are simple, indeed
  \[
    \sum_{n=1}^\infty \lambda_n^{2p} B_n
      = -2b\lambda^{\beta-2}\nu(a_0+c_0)\sum_{n=1}^\infty\lambda_n^{2+2p}\eta_n^2
      = - k_2 \|\eta\|_{1+p}^2,
  \]
  and, by the Cauchy--Schwarz inequality,
  \[
    \begin{aligned}
      \sum_{n=1}^\infty \lambda_n^{2p} C_n
        &= -4\nu^2(a_0+c_0)^2\frac{\lambda^{3\beta+2p-8}}{\lambda^{2p}-1}\sum_{n=1}^\infty\lambda_n^{3-\beta+p}(\lambda_n^{1+p}\eta_n)\\
        &\geq -4\nu^2(a_0+c_0)^2\frac{\lambda^{3\beta+2p-8}}{\lambda^{2p}-1}
           \bigl(\lambda^{2(3-\beta+p)} - 1\bigr)^{-\frac12}\|\eta\|_{1+p}\\
        & = - k_3 \|\eta\|_{1+p},
    \end{aligned}
  \]
  we have used the fact that $p<\beta-3$. Since on the other hand
  \[
    \begin{aligned}
      2\nu\|\eta\|_{1+p}^2
          + b\nu(1+\lambda^2)\sum_{n=1}^\infty\lambda_n^{2+2p}\eta_n\eta_{n+1}
        &\leq   \nu\bigl(2 + b(1+\lambda^2)\lambda^{-1-p}\bigr)\|\eta\|_{1+p}^2\\
        &=      k_4\|\eta\|_{1+p}^2,
    \end{aligned}
  \]
  and
  \[
    \|\eta\|_{1+p}^2
      = \sum_{n=1}^\infty\bigl(\lambda_n^{\frac13(\beta+2p)}\eta_n\bigr)^2\lambda_n^{-\frac23(\beta-3-p)}
    \leq \Bigl(\frac1{k_5}\sum_{n=1}^\infty \lambda_n^{\beta+2p}\eta_n^3\Bigr)^{\frac23},
  \]
  all the estimates obtained so far together yield
  \[
    \dot H + k_4\psi \geq k_1k_5\psi^{\frac32} - k_2\psi - k_3\sqrt\psi,
  \]
  where
  \[
    H(t)
      = \sum_{n=1}^\infty\lambda_n^{2p}\bigl(\eta_n^2 + b\eta_n\eta_{n+1}\bigr),
        \qquad
    \psi(t)
      = \|\eta\|_{1+p}^2.
  \]
  Finally, $H\leq (1+b\lambda^{-p})\psi = k_6 \psi$, and it is easy to show
  by a simple argument (following for instance the one in \cite{Che08}) that if
  \[
    H(0)
      > M_0^2
      := \frac{k_6}{k_1 k_5}\bigl(k_4+k_2+\sqrt{(k_4+k_2)^2 + 2k_1k_3k_5}\bigr)
        \quad\text{and}\quad
    T_0
      > \frac{4k_6^{\frac32}}{k_1k_5\sqrt{H(0)}},
  \]
  the solution of the differential inequality given above for $H$ becomes
  infinite before time $T_0$.
\end{proof}
\subsection{Ineluctable occurrence of the blow--up}

The theorem in the previous section has shown that if the initial condition is
\emph{not too negative} and the noise is \emph{not too strong}, so that the
the process is not kicked away from the quasi--positive area, then the deterministic
dynamics dominates and the process explodes. In this section we show
that the sets that lead to blow--up are conditionally recurrent, in the sense
of Proposition~\ref{p:recurrent} (see Remark~\ref{r:recurrent}).
\begin{theorem}\label{t:noway}
  Let $\beta>3$ and assume \eqref{e:noise} and \eqref{e:noise2}.
  Assume moreover that the set $\{n\geq1: \sigma_n\neq0\}$ is
  non--empty. Given $\alpha\in(\beta-2,1+\alpha_0)$, for every
  $x\in V_\alpha$ and every energy martingale solution $\Prob_x$
  with initial condition $x$,
  \[
    \Prob_x[\tau_\infty^\alpha<\infty] = 1.
  \]
\end{theorem}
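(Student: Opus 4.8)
The plan is to deduce the statement from the abstract recurrence machinery of Section~\ref{s:bu_abstract}, applied with $\mathcal{W}=V_\alpha$ and $B_\infty=B_\infty(\alpha,p,a_0,M_0)$ the set of Theorem~\ref{t:blowup}. First I would reduce everything to the strong solution: by Theorem~\ref{t:markov}, for $x\in V_\alpha$ and any energy martingale solution $\Prob_x$ one has $\tau_\infty^\alpha=\tau_x^\alpha$ under $\Prob_x$, and the event $\{\tau_x^\alpha<\infty\}$ is intrinsic to the (Markov) strong solution and does not depend on the particular $\Prob_x$. Hence it suffices to prove $\Prob[\tau_x^\alpha<\infty]=1$ for the family $(X(\cdot;x),\tau_x^\alpha)_{x\in V_\alpha}$, which satisfies all the structural hypotheses required in Section~\ref{s:bu_abstract}. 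I would then invoke Corollary~\ref{c:recurrent}: it is enough to exhibit $p_0\in(0,1)$, $T_0>0$ and a closed set $B_\infty$ with non-empty interior for which the two hypotheses of Proposition~\ref{p:recurrent} hold for every $x\in V_\alpha$ (with $T_1$ allowed to depend on $x$).

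The second hypothesis of Proposition~\ref{p:recurrent} is supplied directly by Theorem~\ref{t:blowup}: fixing $p\in(0,\beta-3)$ and $a_0\in(0,\tfrac14]$, that theorem produces $p_0>0$, $T_0>0$ and $M_0>0$ with $\Prob_y[\tau_\infty^\alpha\le T_0]\ge p_0$ for every $y\in B_\infty(\alpha,p,a_0,M_0)$. The set $B_\infty$ is closed and has non-empty interior in $V_\alpha$, being cut out by the large-norm condition $\|x\|_p\ge M_0$ and the quasi-positivity condition $\inf_n\lambda_{n-1}^{\beta-2}x_n\ge-\nu a_0$, both stable under small $V_\alpha$-perturbations once the inequalities are taken strict.

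All the difficulty is therefore concentrated in the first hypothesis, the conditional recurrence $\Prob_x[\sigma_{B_\infty}^{x,T_1}=\infty,\ \tau_x=\infty]=0$ (see Remark~\ref{r:recurrent}): on the event that the solution never blows up it must land in $B_\infty$ at some discrete time $kT_1$, almost surely. To reach $B_\infty$ over a window of length $T_1$ two things must be arranged at once: (a) the negative components must be brought below $-\nu a_0\lambda_{n-1}^{2-\beta}$, and (b) the norm $\|X\|_p$ must be driven above $M_0$. For (a) I would use the contraction of the negative components (Lemma~\ref{l:contraction}): starting from any state of finite $H$-norm, and on the event $\{N_{\alpha,c_0}(T_1)=1\}$ that the Ornstein--Uhlenbeck part $Z$ stays small (which has positive probability by Lemma~\ref{l:moments_N}), the negative parts are forced below the required level in a time controlled only by $\|X\|_H$ and the noise size. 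For (b) I would exploit the standing assumption that $\sigma_{n^*}\neq0$ for some $n^*$: on an event of positive probability the Brownian motion $W_{n^*}$ makes a large enough excursion, pushing $X_{n^*}$, hence $\|X\|_p$, above $M_0$, while the remaining noise is kept small so that quasi-positivity is preserved. Combining (a) and (b) through the Markov property should yield a lower bound $q>0$ on the one-step conditional probability of entering $B_\infty$ (or of having already blown up), valid on the survival event; a Borel--Cantelli / L\'evy zero--one argument along the times $kT_1$ then forces $\sigma_{B_\infty}^{x,T_1}<\infty$ almost surely on $\{\tau_x=\infty\}$.

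The main obstacle is exactly this recurrence step, and within it the norm-growth part (b). Unlike the dissipative regime $\beta\le\tfrac52$, for $\beta>3$ the energy estimate is too weak to furnish a stationary measure or any confining a priori bound, so one cannot appeal to a standard recurrence-of-a-compact-set argument; the recurrence must be built by hand, coupling the contraction lemma (to recover quasi-positivity) with the non-degeneracy of the noise on the single mode $n^*$ (to manufacture the required excursion), while controlling the remaining modes uniformly over the possibly large current state. Assembling these estimates into one lower bound $q>0$ that is uniform over the starting point on the survival event is the delicate technical heart of the proof.
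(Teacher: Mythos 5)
Your overall architecture coincides with the paper's: reduction to the strong solution via Theorem~\ref{t:markov}, appeal to Corollary~\ref{c:recurrent}, verification of the second hypothesis of Proposition~\ref{p:recurrent} through Theorem~\ref{t:blowup}, and entry into $B_\infty$ by combining contraction of the negative components with a noise excursion on the lowest forced mode (the paper's Lemmas~\ref{l:contraction} and~\ref{l:expansion}). The gap is exactly in the step you call the ``delicate technical heart'': the uniform one--step lower bound $q>0$ you postulate does not exist, and no rearrangement of Lemmas~\ref{l:contraction} and~\ref{l:expansion} will produce it. The contraction time of Lemma~\ref{l:contraction} grows (logarithmically) with the $H$--norm of the current state, and the admissible expansion window of Lemma~\ref{l:expansion} shrinks as that norm grows; hence over a window of \emph{fixed} length $T_1$ the small--noise/excursion event that drives the state into $B_\infty$ has probability degenerating to zero (in fact the lemmas give no conclusion at all) as $\|X(kT_1)\|_H\to\infty$, and on $\{\tau_x^\alpha=\infty\}$ there is no a priori bound on $\|X(kT_1)\|_H$. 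So your Borel--Cantelli argument along the grid $kT_1$ cannot even start.

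The missing ingredient --- which your last paragraph explicitly discards when you assert that no ``confining a priori bound'' is available --- is the conditional recurrence of a \emph{fixed} ball of $H$, which is what the paper's Lemma~\ref{l:recurrent} supplies. It\^o's formula applied to the cut--off processes $X^R$ of \eqref{e:cutoff} gives $\E[\|X^R(t;x)\|_H^2]\le \|x\|_H^2\e^{-2\nu\lambda^2 t}+\cref{l:recurrent}$ (formula \eqref{e:expo}); iterating via the Markov property yields the geometric bound \eqref{e:decay}, and letting $R\uparrow\infty$,
\[
  \Prob\bigl[\|X(kT;x)\|_H\ge M,\ k=1,\dots,n,\ \tau_x^\alpha=\infty\bigr]
    \le \Bigl(\e^{-\nu\lambda^2 T}+\tfrac{\cref{l:recurrent}}{M^2}\Bigr)^{n-1}
    \longrightarrow 0.
\]
Hence, conditionally on survival, the sampled trajectory visits $\{\|\cdot\|_H\le M\}$ infinitely often. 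Only at these return times does one play the positive--probability noise card: the events $\mathcal{N}(kT;c_0,T_c,T_e,\psi)$ are independent with constant probability, and at a return time the state's $H$--norm is at most $M$, so the times $T_c$, $T_e$ in the contraction and expansion lemmas are uniform there; the conditional Borel--Cantelli argument of Step~5 of Lemma~\ref{l:recurrent} then finishes. Note that what fails for $\beta>3$ is the construction of stationary solutions (blow--up obstructs the standard machinery), not the dissipativity estimate \eqref{e:expo} for the cut--off dynamics; your proposal conflates the two, and restoring that distinction is precisely what completes the proof.
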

Before proving the theorem, we need a few preliminary results.
Our strategy is to use Corollary~\ref{c:recurrent} to prove
that blow--up happens with full probability. Hence we need to
prove that sets \eqref{e:bu_set} satisfy the assumptions
of the corollary. The idea is to find conditions on the size
of the solution in $H$ and on the structure of the perturbation,
so that the system is led to a set \eqref{e:bu_set}. Lemma
\ref{l:contraction} shows that the negative part of the solution
becomes small, while Lemma \ref{l:expansion} shows that the
size of the solution becomes large. Finally, Lemma \ref{l:recurrent}
shows that the conditions found happen with full probability,
conditional to the absence of blow--up.

We need a slight improvement of Lemma~\ref{l:crucial}. That lemma showed
that if the noise is \emph{small} from mode $N_{\alpha,c_0}(T)$ on, and if
the initial condition is \emph{not too negative} from mode $n_0$ on, then
the system is \emph{not too negative} from mode $\max(n_0,N_{\alpha,c_0}(T))$
on. In the next lemma we show that after a short time, which depends only
on the size of the initial condition \emph{in $H$}, modes becomes \emph{not
too negative} and the above result holds from mode $N_{\alpha,c_0}(T)$ on,
regardless of the value of $n_0$.

We prove the result on the event $\{N_{\alpha,c_0}(T)=1\}$, because the control
of the $H$ norm, given in the next lemma, is simpler.
\begin{lemma}\label{l:Hcontrol}
  Let $\beta>3$ and assume \eqref{e:noise} and \eqref{e:noise2}.
  There exists $\cref{l:Hcontrol}>0$ such that for $\alpha\in(\beta-2,1+\alpha_0)$,
  for every $x\in V_\alpha$, every energy martingale solution
  $\Prob_x$ starting at $x$, every $T>0$ and every $c_0>0$ with
  $4 c_0(1+\lambda^{\beta-3})\leq 1$,
  \[
    \sup_{[0,T]} \|X(t)\|_H
      \leq \|x\|_H + \cref{l:Hcontrol}\nu,
  \]
  $\Prob_x$--{a.~s.} on the event
  $\{\tau_\infty^\alpha> T\}\cap\{N_{\beta-2,c_0}(T)=1\}$.
\end{lemma}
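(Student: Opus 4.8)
The plan is to run an energy estimate on $Y = X - Z$, exploiting that on $\{\tau_\infty^\alpha>T\}$ every energy martingale solution coincides with the strong solution (Theorem~\ref{t:markov}), hence with the component-wise uniform limit on $[0,T]$ of the Galerkin approximations $Y^{(N)}$ of \eqref{e:v}, as was recorded in Section~\ref{s:negative}. I would therefore argue on these finite-dimensional systems, where $Y^{(N)}$ is a genuine $C^1$ path given $Z$, and pass to the limit at the end by lower semicontinuity of $\|\cdot\|_H$. The starting point is the pathwise identity
\[
  \tfrac12\tfrac{d}{dt}\|Y^{(N)}\|_H^2
    = -\nu\|Y^{(N)}\|_V^2 + \sum_{n=1}^N Y_n^{(N)} B_n^{(N)}(X^{(N)}),
\]
where $B_n^{(N)}$ is the truncated dyadic nonlinearity and $X^{(N)} = Y^{(N)} + Z$. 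The crucial algebraic fact, which survives truncation because the boundary terms cancel after an index shift, is the orthogonality $\sum_{n=1}^N X_n^{(N)} B_n^{(N)}(X^{(N)}) = 0$. Writing $Y^{(N)} = X^{(N)} - Z$, the cubic contribution disappears and only a quadratic-in-$X$ remainder survives,
\[
  \sum_{n=1}^N Y_n^{(N)} B_n^{(N)}(X^{(N)})
    = -\sum_{n=1}^N Z_n B_n^{(N)}(X^{(N)}).
\]

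The heart of the proof is the estimate of this remainder, where the hypothesis on $c_0$ enters and where, unlike in Lemma~\ref{l:crucial}, no positivity is needed. On $\{N_{\beta-2,c_0}(T)=1\}$ one has $|Z_n(s)| \le c_0\nu\lambda_{n-1}^{2-\beta}$ for all $n\ge1$ and $s\in[0,T]$. For the term $Z_n\lambda_{n-1}^\beta X_{n-1}^2$ this produces a factor $\lambda_{n-1}^2$, so after summation it is bounded by $c_0\nu\|X^{(N)}\|_V^2$; for $Z_n\lambda_n^\beta X_n X_{n+1}$ the exponents combine to $\lambda^{\beta-3}\lambda_n^2$, and a Cauchy--Schwarz step gives $c_0\nu\lambda^{\beta-3}\|X^{(N)}\|_V^2$. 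Hence
\[
  \Bigl|\sum_{n=1}^N Z_n B_n^{(N)}(X^{(N)})\Bigr|
    \le c_0\nu\bigl(1 + \lambda^{\beta-3}\bigr)\|X^{(N)}\|_V^2
    \le \tfrac14\nu\|X^{(N)}\|_V^2,
\]
exactly under $4c_0(1+\lambda^{\beta-3})\le1$. Splitting $\|X^{(N)}\|_V^2 \le 2\|Y^{(N)}\|_V^2 + 2\|Z\|_V^2$ absorbs half of the dissipation and leaves a source governed by $\|Z\|_V^2$, so $\tfrac{d}{dt}\|Y^{(N)}\|_H^2 \le -\nu\|Y^{(N)}\|_V^2 + \nu\|Z\|_V^2$.

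The last and genuinely delicate point is to make the bound uniform in $T$: a plain integration of this inequality would yield a term growing linearly in $t$. The remedy is the Poincar\'e-type inequality $\|Y^{(N)}\|_V^2 \ge \lambda^2\|Y^{(N)}\|_H^2$, which converts the estimate into the exponentially stable scalar comparison $\tfrac{d}{dt}\|Y^{(N)}\|_H^2 \le -\nu\lambda^2\|Y^{(N)}\|_H^2 + \nu\|Z\|_V^2$, whose solution stays below $\|Y^{(N)}(0)\|_H^2 + \lambda^{-2}\sup_{[0,T]}\|Z\|_V^2$ for every $t$. Since $Y^{(N)}(0)=x^{(N)}$ and, on $\{N_{\beta-2,c_0}(T)=1\}$, $\sup_{[0,T]}\|Z\|_V^2 \le c_0^2\nu^2\sum_n\lambda_n^2\lambda_{n-1}^{2(2-\beta)} =: C_Z c_0^2\nu^2$ with $C_Z<\infty$ because $\beta>3$, this gives $\sup_{[0,T]}\|Y^{(N)}\|_H \le \|x\|_H + \lambda^{-1}\sqrt{C_Z}\,c_0\nu$ uniformly in $N$. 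Passing to the limit and adding the analogous convergent bound $\sup_{[0,T]}\|Z\|_H \le \sqrt{C_Z'}\,c_0\nu$ (here $\beta>2$ suffices) to reconstitute $X = Y + Z$ yields the claim, with $\cref{l:Hcontrol}$ depending only on $\beta$ after using $c_0\le\tfrac14<1$. The main obstacle is precisely this $T$-uniformity, which forces the use of dissipation in Poincar\'e form rather than a bare integration of the energy balance.
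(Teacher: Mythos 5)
Your proof is correct, and its skeleton---Galerkin approximation of \eqref{e:v} on the event $\{\tau_\infty^\alpha>T\}\cap\{N_{\beta-2,c_0}(T)=1\}$, an $H$--energy estimate on $Y$, absorption of the nonlinear remainder into the dissipation via the hypothesis $4c_0(1+\lambda^{\beta-3})\leq1$, and the Poincar\'e inequality $\|Y\|_1\geq\lambda\|Y\|_H$ to make the bound uniform in $T$---is exactly that of the paper. The one genuine difference is the algebraic treatment of the nonlinearity, writing $B_n(x)=\lambda_{n-1}^\beta x_{n-1}^2-\lambda_n^\beta x_n x_{n+1}$. The paper expands $B_n(Y+Z)$ and uses the cancellation $\sum_n Y_n\bigl(\lambda_{n-1}^\beta Y_{n-1}^2-\lambda_n^\beta Y_nY_{n+1}\bigr)=0$, so its remainder consists of the mixed terms $2\sum_n\lambda_n^\beta\bigl(Y_nY_{n+1}Z_n+Y_{n+1}Z_n^2-Y_n^2Z_{n+1}-Y_nZ_nZ_{n+1}\bigr)$, which it bounds by $2c_0\nu(1+\lambda^{\beta-3})\|Y\|_1^2$ plus a term \emph{linear} in $\|Y\|_1$ that is then handled by Young's inequality; there $\beta>3$ enters through the convergence of the geometric series multiplying the $Z$--quadratic contributions. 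You instead invoke the orthogonality $\sum_n X_nB_n(X)=0$ (which, as you note, survives the Galerkin truncation) to throw the whole remainder onto $-\sum_n Z_nB_n(X)$, which is quadratic in $X$ and linear in $Z$, so no Young step is needed; the price is the splitting $\|X\|_1^2\leq2\|Y\|_1^2+2\|Z\|_1^2$ and the finiteness of $\sup_{[0,T]}\|Z\|_1$ on the event, which is precisely where $\beta>3$ enters for you. The two routes use the hypotheses at parallel points and produce the same kind of conclusion, with a constant depending only on $\beta$: yours is marginally cleaner in the estimates, while the paper's keeps everything in terms of $\|Y\|_1$ and therefore never needs a $V$--norm bound on $Z$.
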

\begin{proof}
  As already remarked, since we work on $\{\tau_\infty^\alpha> T\}$,
  problem \eqref{e:v} has a unique solution, so we can work directly
  on $Y$ (the rigorous proof proceeds through Galerkin approximations).
  We know that $\lambda_n^{\beta-2}|Z_n(t)|\leq c_0\nu$ for $t\in[0,T]$
  and $n\geq1$, hence
  \[
    \begin{aligned}
      \frac{d}{dt}\|Y\|_H^2 + 2\nu\|Y\|_1^2
        &\leq 2\sum_{n=1}^\infty \lambda_n^\beta(Y_nY_{n+1}Z_n + Y_{n+1}Z_n^2 - Y_n^2Z_{n+1} - Y_nZ_nZ_{n+1})\\
        &\leq 2c_0\nu(1+\lambda^{\beta-3})\|Y\|_1^2
              + 2c_0^2\nu^2\frac{\lambda^{\beta-2}}{\lambda^{\beta-3}-1}\|Y\|_1\\
        &\leq 4c_0\nu(1+\lambda^{\beta-3})\|Y\|_1^2
              + \frac{\lambda^{2\beta-4}}{2(\lambda^{\beta-3}-1)}c_0^3\nu^3.
    \end{aligned}
  \]
  Using the assumption on $c_0$ and the fact that $\|Y\|_1\geq\lambda\|Y\|_H$, we finally
  obtain
  \[
    \frac{d}{dt}\|Y\|_H^2 + \nu\lambda^2\|Y\|_H^2
      \leq k_0 c_0^3\nu^3,
  \]
  where the value of $k_0$ follows from the inequality above and depends only
  on $\beta$. By integrating the differential inequality, we get
  \[
    \|Y(t)\|_H^2
      \leq \|x\|_H^2 + \frac{k_0}{\lambda^2}c_0^3\nu^2,
        \quad t\in[0,T],
  \]
  and in conclusion for $t\in[0,T]$,
  \[
    \|X(t)\|_H
      \leq \|Y(t)\|_H + \|Z(t)\|_H
      \leq \|x\|_H + \bigl(\sqrt{k_0}\lambda^{-1} + (1-\lambda^{4-2\beta})^{-\frac12}\bigr)\nu,
  \]
  where we have used the fact that $c_0\leq 1$.
\end{proof}
\begin{lemma}[Contraction of the negative components]\label{l:contraction}
  Let $\beta>3$ and assume \eqref{e:noise} and \eqref{e:noise2}.
  For every $M>0$, $a_0\in(0,\frac14]$ and $c_0<a_0$, with
  $4 c_0(1+\lambda^{\beta-3})\leq 1$, there exists $T_M>0$ such
  that for every $x\in V_\alpha$, with $\alpha\in(\beta-2,1+\alpha_0)$
  and $\|x\|_H\leq M$, and every energy martingale solution $\Prob_x$,
  \[
    \inf_{n\geq 1} \bigl(\lambda_{n-1}^{\beta-2} X_n(T_M)\bigr)
      \geq -(a_0 + c_0)\nu,
  \]
  $\Prob_x$--{a.~s.} on the event $\{\tau_\infty^\alpha> T_M\}\cap\{N_{\beta-2,c_0}(T_M)=1\}$.
\end{lemma}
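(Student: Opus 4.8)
The plan is to prove a quantitative "contraction" refinement of the main Lemma~\ref{l:crucial}: starting from any $x\in V_\alpha$ with $\|x\|_H\le M$, after a short time $T_M$ (depending only on $M$ and the noise bound through $c_0$), all the negative components have shrunk so that $\lambda_{n-1}^{\beta-2}X_n\ge-(a_0+c_0)\nu$ uniformly in $n$. Whereas Lemma~\ref{l:crucial} gives quasi-positivity only from mode $n_0\vee N_{\beta-2,c_0}(T)$ onward (and $n_0$ depends on the initial condition), the point here is that after waiting the fixed time $T_M$ we may take $n_0=1$, with the waiting time controlled purely by $\|x\|_H$. Throughout I would work on the event $\{\tau_\infty^\alpha>T_M\}\cap\{N_{\beta-2,c_0}(T_M)=1\}$, on which \eqref{e:v} has a unique solution, $\lambda_{n-1}^{\beta-2}|Z_n|\le c_0\nu$ for all $n$ and all $t\in[0,T_M]$, and by Lemma~\ref{l:Hcontrol} the $H$-norm stays bounded by $M+\cref{l:Hcontrol}\nu$.

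First I would set up the same comparison machinery as in Lemma~\ref{l:crucial}, working with $Y=X-Z$ (rigorously through the Galerkin system \eqref{e:galerkin}, passing to the limit componentwise). The key structural fact driving the contraction is the lower bound on $\dot Y_n$ computed in the proof of Lemma~\ref{l:crucial}: at a point where $Y_n$ is the most negative relative to the weight, the dissipative term $-\nu\lambda_n^2Y_n$ is positive and of order $\nu^2\lambda_{n-1}^{2-\alpha}$, and I would show that as long as $\lambda_{n-1}^{\beta-2}Y_n$ lies below the target level $-a_0\nu$, the drift pushes it upward. With $\alpha=\beta-2$ the dissipation gives $\dot Y_n\gtrsim\nu^2\lambda_n^2\lambda_{n-1}^{2-\beta}\,|\,\text{(negative excess)}\,|$, i.e. a linear restoring force with rate $\sim\nu\lambda_n^2$ acting on the scaled negative part. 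The plan is to bound the scaled negative part $\bigl(\lambda_{n-1}^{\beta-2}Y_n\bigr)_-$ by the solution of a scalar ODE that decays like $\e^{-\nu\lambda_n^2 t}$ down to the equilibrium level $a_0\nu$, and to quantify the initial overshoot using $\|x\|_H\le M$: since $\lambda_{n-1}^{\beta-2}|Y_n(0)|\le\lambda_{n-1}^{\beta-2}\|x\|_H+c_0\nu$, the worst case decays fastest for large $n$ (because of the $\lambda_n^2$ rate) and slowest for the low modes, but the low modes have only a bounded overshoot controlled by $M$.

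The decisive observation making the time $T_M$ uniform in $n$ is precisely that the restoring rate $\nu\lambda_n^2$ grows with $n$ while the initial negative excess grows only like $\lambda_{n-1}^{\beta-2}M$; since $\beta-2<2$, the ratio (excess)/(rate) is bounded in $n$, so a single $T_M$, chosen large enough to contract even the slowest (smallest-$n$) mode, simultaneously contracts all modes below the target $-(a_0+c_0)\nu$. I would make this rigorous with a continuity/first-exit argument in the spirit of Lemma~\ref{l:crucial}: let $t_0$ be the first time the scaled negative part of some mode touches the forbidden level; combining the ODE lower bound on $\dot Y_n$ at $t_0$ with the $Z$-bound (valid since $N_{\beta-2,c_0}(T_M)=1$) and the inductive control $Y_{n+1}\ge-a_0\nu\lambda_n^{-(\beta-2)}$ of the next mode, show the drift is strictly positive there, contradicting the definition of $t_0$ once $t_0\ge T_M$. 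Equivalently one shows the map $t\mapsto\bigl(\lambda_{n-1}^{\beta-2}Y_n(t)\bigr)_-$ cannot re-enter the forbidden region and has decayed past the target by time $T_M$.

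The main obstacle I anticipate is handling the \emph{low} modes uniformly: for small $n$ the dissipation rate $\nu\lambda_n^2$ is only $O(1)$, so the decay toward equilibrium is genuinely slow, and the coupling terms $\lambda_n^\beta X_nX_{n+1}$ are not obviously negligible there. Controlling this is exactly where the $H$-bound of Lemma~\ref{l:Hcontrol} is essential, since $\|X\|_H\le M+\cref{l:Hcontrol}\nu$ bounds every $|X_n|$ and hence the low-mode interaction, so that the restoring dissipation dominates once $Y_n$ is sufficiently negative; the required $T_M$ is then governed by the slowest mode, $n=1$, and depends on $M$ through the overshoot $\lambda_0^{\beta-2}M$ together with the $c_0$-dependent equilibrium level. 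A secondary technical point is that the quasi-positivity of the neighbouring mode $Y_{n+1}$ used in the drift estimate must itself be bootstrapped; I would handle this exactly as in Lemma~\ref{l:crucial}, running the first-exit argument simultaneously over all indices so that the estimate $Y_{n+1}\ge-a_0\nu\lambda_n^{-(\beta-2)}$ is available at the critical time by the very definition of $t_0$.
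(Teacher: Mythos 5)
Your proposal correctly identifies the restoring mechanism (the dissipation $-\nu\lambda_n^2Y_n$ acts as a linear damping with rate $\nu\lambda_n^2$ on the scaled negative part, once the neighbouring mode is quasi--positive) and the role of Lemma~\ref{l:Hcontrol} in bounding the initial overshoot by $\|x\|_H+\cref{l:Hcontrol}\nu$. But there is a genuine gap in how you propose to supply the hypothesis on the neighbouring mode. You write that the bound $Y_{n+1}\geq-a_0\nu\lambda_n^{-(\beta-2)}$ would be ``available at the critical time by the very definition of $t_0$'', by running the first--exit argument of Lemma~\ref{l:crucial} simultaneously over all indices. That argument works in Lemma~\ref{l:crucial} precisely because all relevant modes \emph{start inside} the good region, so there is a first time at which some mode touches the boundary, and at that instant every other mode still satisfies the bound. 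Here the modes $n<n_0$ start \emph{outside} the good region: there is no first--exit time, and at any putative critical time for mode $n$ the mode $n+1$ may still be as negative as $-\|x\|_H$, in which case the coupling term $-\lambda_n^\beta X_nX_{n+1}$ pushes mode $n$ \emph{downward}, not upward. Your fallback --- that the $H$--bound $|X_{n+1}|\leq M+\cref{l:Hcontrol}\nu$ makes the dissipation dominate ``once $Y_n$ is sufficiently negative'' --- also fails: when $X_n,X_{n+1}<0$, both the dissipation $\nu\lambda_n^2|Y_n|$ and the coupling $\lambda_n^\beta|X_n||X_{n+1}|$ scale \emph{linearly} in $|X_n|$, so the comparison is between the coefficients $\nu\lambda_n^2$ and $\lambda_n^\beta\,(X_{n+1})_-$; it is favourable only if $(X_{n+1})_-\lesssim\nu\lambda_n^{2-\beta}$, i.e.\ only if mode $n+1$ is \emph{already} quasi--positive. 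No largeness of $|Y_n|$ rescues this.

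The paper closes exactly this gap with a sequential downward induction, which is the idea missing from your proposal: modes $n\geq n_0$ are good for all times by Lemma~\ref{l:crucial}; then mode $n_0-1$ is contracted over a time interval of length $T_{n_0-1}$ (during which its neighbour $n_0$ is known to be good), after which it stays good; then mode $n_0-2$ is contracted over a further interval of length $T_{n_0-2}$, and so on down to mode $1$. At the start of each stage the negative part of the current mode is bounded by $\|x\|_H+\cref{l:Hcontrol}\nu$ (Lemma~\ref{l:Hcontrol}), and the linear differential inequality for $\eta_n=Y_n+c_0\nu\lambda_{n-1}^{2-\beta}$ gives decay at rate $\tfrac12\nu\lambda_n^2$, so the stage lasts roughly $T_n\sim\nu^{-1}\lambda_n^{-2}\bigl[(\beta-2)(n-1)\log\lambda+\log\bigl((M+\cref{l:Hcontrol}\nu)/((a_0-c_0)\nu)\bigr)\bigr]$. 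Since $T_n$ decays geometrically in $n$, the total waiting time $T_M=\sum_nT_n$ is finite and depends only on $M$, $a_0$, $c_0$, which is what makes the statement uniform over $\|x\|_H\leq M$. Two smaller inaccuracies in your write--up: the summability of the stage times has nothing to do with your condition $\beta-2<2$ (the lemma covers all $\beta>3$; $T_n\to0$ because the logarithm of the required contraction ratio grows only linearly in $n$ while the rate grows like $\lambda_n^2$), and $T_M$ is the \emph{sum} of the stage times rather than the time of the slowest mode, although the low modes do dominate that sum.
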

\begin{proof}
  Let $n_0$ be the first integer such that
  $\inf_{n\geq n_0}\lambda_n^{\beta-2}x_n\geq -a_0\nu$.
  By the choice of $c_0$ and $a_0$, we know that 
  $c_0<\sqrt{a_0}(1-\sqrt{a_0})$, hence Lemma~\ref{l:crucial}
  implies that $\lambda_n^{\beta-2}Y_n(t)\geq -a_0\nu$ holds
  for every $t\in[0,T_M]$ and every $n\geq n_0$. If $n_0=1$
  there is nothing to prove, so we consider the case $n_0>1$.

  The idea here is to show that the negative part of the mode $n_0-1$
  becomes closer to $0$ within a time $T_{n_0-1}$. After this time,
  again by Lemma~\ref{l:crucial}, the negative part of the mode $n_0-1$
  stays small, and we can apply the same contraction idea on the
  negative part of the component $n_0-2$, which then will become
  small as well within a time $T_{n_0-2}$, and so on. The sequence
  of time intervals turns out to be summable and dependent
  only on the size of the initial condition in $H$. 
    
  By these considerations it is sufficient to prove the following
  statement:
  \begin{quotation}
  given $n>1$, if we know that for a time $t_0>0$,
  \[
    \sup_{k\geq 1}\sup_{[t_0,T]}\lambda_{k-1}^{\beta-2}|Z_k|\leq c_0\nu
      \quad\text{and}\quad
    \sup_{[t_0,T]}\lambda_n^{\beta-2}(Y_{n+1})_-\leq a_0\nu
  \]
  then at time $t_0 + T_n$, where
  \[
    T_n(\|x\|_H,c_0,a_0)
      = \frac2\nu(\beta-2)\log\lambda\frac{n-1}{\lambda_n^2}
        + \frac2\nu\lambda_n^{-2}\log\Bigl(1\vee\frac{\|x\|_H+\cref{l:Hcontrol}\nu}{(a_0-c_0)\nu}\Bigr),
  \]
  we have that
  \[
    Y_n(t_0 + T_n)\geq -a_0\nu\lambda_{n-1}^{2-\beta}.
  \]
  \end{quotation}
  Before proving the claim, we notice that $\sum_n T_n<\infty$, hence we can choose
  \[
    T_M
      = \sum_{n=1}^\infty T_n(M,c_0,a_0).
  \]
  We conclude the proof of the lemma by showing the above claim. Set
  \[
    \eta_n = Y_n + c_0\nu\lambda_{n-1}^{2-\beta},
  \]
  then $X_n = \eta_n - (c_0\nu\lambda_{n-1}^{2-\beta} - Z_n)$ and
  \[
    \begin{aligned}
      \dot\eta_n
        &= -\nu\lambda_n^2\eta_n
           + c_0\nu^2\lambda^2\lambda_{n-1}^{4-\beta}
           + \lambda_{n-1}^\beta X_{n-1}^2
           - \lambda_n^\beta X_n X_{n+1}\\
        &\geq -(\nu\lambda_n^2 + \lambda_n^\beta X_{n+1})\eta_n
           + c_0\nu^2\lambda^2\lambda_{n-1}^{4-\beta}
           + \lambda_n^\beta (c_0\nu\lambda_{n-1}^{2-\beta} - Z_n)X_{n+1}\\
        &\geq -(\nu\lambda_n^2 + \lambda_n^\beta X_{n+1})\eta_n
           + c_0\nu^2\lambda^2\lambda_{n-1}^{4-\beta}
           - \lambda_n^\beta (c_0\nu\lambda_{n-1}^{2-\beta} - Z_n)(X_{n+1})_-.
    \end{aligned}
  \]
  By the assumption of the claim, $(X_{n+1})_-\leq(a_0+c_0)\nu\lambda_n^{2-\beta}$
  and $(c_0\nu\lambda_{n-1}^{2-\beta} - Z_n)\leq 2c_0\nu\lambda_{n-1}^{2-\beta}$,
  hence
  \[
    \begin{aligned}
      \dot\eta_n
        &\geq - (\nu\lambda_n^2 + \lambda_n^\beta X_{n+1})\eta_n
              + c_0\lambda^2\nu^2\bigl(1 - 2(a_0+c_0)\bigr)\lambda_{n-1}^{4-\beta}\\
        &\geq - (\nu\lambda_n^2 + \lambda_n^\beta X_{n+1})\eta_n,
    \end{aligned}
  \]
  which implies that for $t\geq t_0$,
  \begin{multline*}
    \eta_n(t)
      \geq \eta_n(t_0)\exp\Bigl(-\int_{t_0}^t(\nu\lambda_n^2 + \lambda_n^\beta X_{n+1})\,ds\Bigr)\geq\\
      \geq -(\eta_n(t_0))_-\exp\Bigl(-\int_{t_0}^t(\nu\lambda_n^2 + \lambda_n^\beta X_{n+1})\,ds\Bigr)
      \geq -(\eta_n(t_0))_-\e^{-\frac12\nu\lambda_n^2(t-t_0)},
  \end{multline*}
  since, using the fact that $a_0+c_0\leq\tfrac12$,
  \[
    \nu\lambda_n^2 + \lambda_n^\beta X_{n+1}
      \geq \nu\lambda_n^2 - \nu(a_0+c_0)\lambda_n^2
      = \nu\lambda_n^2\bigl(1 - (a_0+c_0)\bigr)
      \geq \frac12\nu\lambda_n^2.
  \]
  Finally, by Lemma~\ref{l:Hcontrol},
  \[
    (\eta_n(t_0))_-
      \leq (Y_n(t_0))_-
      \leq |Y_n(t_0)|
      \leq \|Y(t_0)\|_H
      \leq \|x\|_H + \cref{l:Hcontrol}\nu,
  \]
  and it is elementary now to check that at time $t_0+T_n$, 
  \begin{multline*}
    Y_n(t_0+T_n)
      = \eta_n(t_0+T_n) - c_0\nu\lambda_{n-1}^{2-\beta}\geq\\
      \geq -(\|x\|_H + \cref{l:Hcontrol})\e^{-\frac12\nu\lambda_n^2 T_n} - c_0\nu\lambda_{n-1}^{2-\beta}
      \geq -a_0\nu\lambda_n^{2-\beta},
  \end{multline*}
  which concludes the proof of the claim.
\end{proof}
In order to show that the hitting time for sets like \eqref{e:bu_set}, where
there is a uniform estimate on the probability of blow--up, is finite, we also
need to ensure that the size of the solution is large enough, while being
\emph{not too negative}.

At this stage the noise is crucial, although one randomly perturbed component
is enough for our purposes. The underlying ideas of the following lemma come
from control theory, although we do not need sophisticated results like
\cite{Shi06,Rom04}, because a quick and strong impulse turns out to be sufficient.
\begin{lemma}[Expansion in $H$]\label{l:expansion}
  Under the assumptions of Theorem~\ref{t:noway}, let $m$ be the smallest
  element of the set $\{n\geq1:\sigma_n\neq0\}$. Given $M_1>0$, $M_2>0$,
  and $a_0,a_0',c_0>0$ such that $c_0<a_0<a_0'<\tfrac14$ and
  $c_0+a_0<a_0'$, for every $X(0)\in V_\alpha$, with $\|X(0)\|_H\leq M_1$
  and $\inf_{n\geq 1}\lambda_{n-1}^{\beta-2} X_n(0)\geq -a_0\nu$, there
  exists $T>0$, whose value depends on $M_1$, $M_2$, $c_0$, $a_0$, $a_0'$
  and $m$, such that
  \begin{itemize}
    \item $\lambda_{n-1}^{\beta-2} X_n(t) \geq -(a_0' + c_0)\nu$ for every $n\geq1$
      and $t\in[0,T]$,
    \item $\|X(T)\|_H\geq M_2$,
  \end{itemize}
  on the event
  \[
    \{\tau_\infty^\alpha>T_M\}
      \cap\{\sup_{[0,T]}\lambda_{n-1}^{\beta-2}|Z_n(t)|\leq c_0\nu\text{ for }n\neq m\}
      \cap\{\sup_{[0,T]}\lambda_{m-1}^{\beta-2}|Z_m(t) - \psi(t)|\leq c_0\nu\},
  \]
  where $\psi:[0,T]\to\R$ is an non--decreasing continuous function such that
  $\psi(0)=0$ and $\psi(T)$ large enough depending on the above given data
  (its value is given in the proof).
\end{lemma}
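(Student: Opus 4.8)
The plan is to realise the expansion by injecting a \emph{quick and strong impulse} through the only forced mode $m$, exploiting the fact that on the prescribed event $Z_m$ closely tracks the path $\psi$ while all the other noise components stay small. As in the previous proofs I work path--wise with $Y = X - Z$, which solves \eqref{e:v} and has a unique solution on $[0,T]$ since we are on $\{\tau_\infty^\alpha>T\}$ (the rigorous argument passing, as usual, through the Galerkin system). Because $Z(0)=0$ the datum satisfies $\lambda_{n-1}^{\beta-2}Y_n(0) = \lambda_{n-1}^{\beta-2}X_n(0) \geq -a_0\nu$, and since $a_0'<\tfrac14$ the pair $(a_0',c_0)$ still verifies condition \eqref{e:crucial} for every $n_0$; this is exactly the room left for the perturbation. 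I would take $\psi$ ramping monotonically from $0$ to a large value $\Psi$ over a short interval, with $T$ of order $\Psi^{-2}$, so that $X_m = Y_m + Z_m \approx \psi$ becomes large.

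The lower bound on the components is obtained by re--running the maximum--principle argument of Lemma~\ref{l:crucial} with parameter $a_0'$ in place of $a_0$, but isolating the mode $m$. For every index $n \neq m$, at a putative first time $t_0$ with $\lambda_{n-1}^{\beta-2}Y_n(t_0) = -a_0'\nu$, the drift points strictly inward exactly as in Lemma~\ref{l:crucial}: for $n \neq m-1$ the two relevant noise components $Z_n,Z_{n+1}$ are small and the condition $a_0' - (a_0'+c_0)^2>0$ (which follows from $a_0'<\tfrac14$ and $c_0<a_0'$) gives $\dot Y_n(t_0)>0$; for $n = m-1$ the large \emph{positive} $X_m$ enters with a favourable sign, since $a_0'\nu\lambda_{m-2}^{-(\beta-2)} - Z_{m-1}\geq 0$ multiplies $X_m>0$, so again $\dot Y_{m-1}(t_0)>0$. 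Hence no mode $n \neq m$ can cross its barrier first.

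It remains to control the single mode $m$, where the large $Z_m$ flips the sign in the maximum principle. Here I would argue by a short--time estimate: $Y_m$ drains only through $-\lambda_m^\beta X_m X_{m+1}$, and a bootstrap (first--exit) argument shows that, as long as all components stay above their barriers, $X_m \lesssim \Psi$ while the pumped neighbour obeys $X_{m+1}\lesssim \lambda_m^\beta\Psi^2 T$, which is $O(1)$ in $\Psi$; consequently $|\dot Y_m| = O(\Psi)$ and the total drain over $[0,T]$ is $O(\Psi^{-1})$. For $\Psi$ large this drain is below the margin $(a_0'-a_0)\nu\lambda_{m-1}^{2-\beta}$, so $Y_m$ never reaches $-a_0'\nu\lambda_{m-1}^{2-\beta}$; combining with $Z_m \geq -c_0\nu\lambda_{m-1}^{2-\beta}$ yields $\lambda_{n-1}^{\beta-2}X_n \geq -(a_0'+c_0)\nu$ for all $n$ and all $t\in[0,T]$. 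The energy lower bound is then immediate from the impulse, $\|X(T)\|_H \geq |X_m(T)| \geq \psi(T) - c_0\nu\lambda_{m-1}^{2-\beta} - |Y_m(T)|$, which exceeds $M_2$ once $\psi(T)=\Psi$ is taken large enough.

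The delicate point, and the main obstacle, is the coupling hidden in the last step: bounding the drain of $Y_m$ needs an upper bound on $X_{m+1}$, which needs an upper bound on $X_m$ and hence lower bounds on the neighbouring modes, so the estimate on $m$ and the maximum principle on the other modes must be closed \emph{simultaneously}. I would handle this with a stopping--time argument, running everything up to the first instant a barrier is touched and checking that the constants produced depend only on $m$ and $M_1$ and not on $\Psi$, so that the scaling $T \sim \Psi^{-2}$ indeed forces the drain of $Y_m$ to vanish as $\Psi\to\infty$.
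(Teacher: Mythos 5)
Your strategy is the same as the paper's in spirit (the paper itself announces it as ``a quick and strong impulse'' through the single forced mode $m$), but the implementation differs, and the difference matters. The paper fixes the impulse at the \emph{moderate} value $\psi(T)=M_2+2\nu$ and shrinks $T$; its workhorse is a global Gronwall estimate on $\|Y\|_H^2$ (Step 1 of its proof), which gives $\sup_{[0,T]}\|X\|_H\leq M_4(T,\psi(T))$ with $\lambda_m^\beta M_4(T,\psi(T))^2\,T\to0$ as $T\downarrow0$. That single a priori bound controls every nonlinear term at once: the drain of mode $m$ in the variation--of--constants formula, the largeness $X_m(T)\geq\psi(T)-\nu-\lambda_m^\beta M_4 T\geq M_2$, and the input to the lower bounds. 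The remaining modes are then handled without any circularity: for $n>m$ the argument of Lemma~\ref{l:crucial} applies verbatim because $X_m$ enters the equations for $(Y_n)_{n>m}$ only through the \emph{positive} term $\lambda_m^\beta X_m^2$, and for $n<m$ the paper runs a downward finite induction with an ODE comparison as in Lemma~\ref{l:contraction}, rather than a first--touching argument. You instead send $\Psi\to\infty$ with $T\sim\Psi^{-2}$ and close everything by a simultaneous bootstrap; the scalings you claim ($X_m\lesssim\Psi$, $X_{m+1}=O(1)$, drain $O(\Psi^{-1})$) are in fact consistent, so the route can be made to work, but it is strictly more delicate than the paper's and buys nothing in the conclusion.

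The substantive hole is precisely the point you flag but do not resolve: your stopping time (``first instant a barrier is touched'') produces only \emph{lower} bounds, while your drain estimate needs \emph{upper} bounds on all the modes $1,\dots,m+1$ --- the pumping $\lambda_{m-1}^\beta X_{m-1}^2$ into mode $m$ is fed by mode $m-1$, which is fed by mode $m-2$, and so on down to mode $1$ --- and these do not follow from the barrier stopping time as defined. They must either be built into the stopping time (exit also when a stipulated upper bound is violated) or, more cleanly, replaced by the paper's global $H$--estimate; note that in your scaling regime the crude bound $\sup\|X\|_H\lesssim\Psi$ makes the naive drain estimate $\lambda_m^\beta T\sup\|X\|_H^2=O(1)$ rather than $o(1)$, so you genuinely need the finer mode--wise bound on $X_{m+1}$, whereas with the paper's fixed $\psi(T)$ the crude bound suffices. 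A second, minor, repair: your treatment of $n=m-1$ assumes $X_m>0$, which fails at small times since $\psi(0)=0$ and $X_m(0)$ may be as low as $-a_0\nu\lambda_{m-1}^{2-\beta}$. The fix is immediate: if $X_m(t_0)<0$ at a first touching time $t_0$ for mode $m-1$, then $|X_m(t_0)|\leq(a_0'+c_0)\nu\lambda_{m-1}^{2-\beta}$ because mode $m$ has not yet crossed its own barrier, and the standard computation of Lemma~\ref{l:crucial} applies since $(a_0'+c_0)^2<a_0'$, which follows from $c_0+a_0<a_0'<\tfrac14$.
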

\begin{proof}
  We work on the event given in the statement of the proof.
  
  \noindent\emph{Step 1: estimate in $H$.}
  We first give an estimate of $X$ in $H$. Set
  $\widetilde\psi=\sup_{[0,T]}\|Z_m\|_H\leq\psi(T) + c_0\nu$, then
  \[
    \begin{aligned}
      \frac{d}{dt}\|Y\|_H^2 + 2\nu\|Y\|_1^2
        &\leq 2\sum_{n=1}^\infty \lambda_n^\beta X_n(Z_n Y_{n+1} - Y_nZ_{n+1})\\
        &\leq 2\sum_{n=1}^\infty \lambda_n^\beta
          \bigl(|Z_n Y_n Y_{n+1}| + Z_n^2 |Y_{n+1}| + |Z_{n+1}|Y_n^2 + |Z_n Z_{n+1}Y_n|\bigr)\\
        &\quad + 2\lambda_{m-1}^\beta \bigl(|Z_m|Y_{m-1}^2 + |Z_{m-1}Z_mY_{m-1}|\bigr)\\
        &\quad + 2\lambda_m^\beta \bigl(||Z_mY_mY_{m+1} + Z_m^2 |Y_{m+1}| + |Z_mZ_{m+1}Y_m|\bigr)\\
        &\leq 4c_0\nu(1+\lambda^{\beta-3})\|Y\|_1^2 + k_0c_0^3\nu^3\\
        &\quad + \lambda_m^\beta(2\widetilde\psi^2 + 4\widetilde\psi+ 4c_0\nu\lambda^{\beta-3}\widetilde\psi)(1+\|Y\|_H^2)\\
        &\leq \nu\|Y\|_1^2 + k_0\nu^3 + 16\lambda_m^\beta(1+\nu)(1+\widetilde\psi^2)(1+\|Y\|_H^2),
    \end{aligned}
  \]
  where we have estimated the terms without $Z_m$
  as in Lemma \ref{l:Hcontrol} (the constant $k_0$
  is the same of that lemma) and the terms with $Z_m$
  bounding the components of $Y$ with $\|Y\|_H$.
  If $k_1=k_0\nu^3$, $k_2=16\lambda_m^\beta(1+\nu)$
  and $M_3(T,\psi(T))^2 = (M_1^2 + k_1/k_2)\exp\bigl(k_2T(1+\widetilde\psi^2)\bigr)$,
  it follows from Gronwall's lemma that
  $\sup_{[0,T]}\|Y(t)\|_H^2\leq M_3(T,\psi(T))^2$.
  Since on the given event we have that
  $\|Z(t)\|_H\leq \lambda^{2-\beta}(\lambda^{2-\beta}-1)^{-1} + c_0\nu + \psi(T)$
  for every $t\in[0,T]$, we finally have that
  \[
    \begin{aligned}
      \sup_{[0,T]}\|X(t)\|_H
        &\leq c_0\nu
          + \frac{\lambda^{2-\beta}}{\lambda^{2-\beta}-1}
          + \psi(T)
          + \bigl(M_1 + \sqrt{\tfrac{k_1}{k_2}}\bigr)\e^{k_2 T(1+(\psi(T)+c_0\nu)^2)}\\
        &=: M_4(T,\psi(T))
    \end{aligned}
  \]

  \noindent\emph{Step 2: Large size at time $T$.}
  Using the previous estimate we have
  \[
    \begin{aligned}
      X_m(t)
        &= \e^{-\nu\lambda_m^2t}X_m(0)
           + Z_m(t)
           + \int_0^t \e^{-\nu\lambda_m^2(t-s)}\bigl(
               \lambda_{m-1}^\beta X_{m-1}^2 - \lambda_m^\beta X_m X_{m+1}\bigr)\,ds\\
        &\geq - a_0\nu\lambda_{m-1}^{2-\beta}
              + \bigl(\psi(t) - c_0\nu\lambda_{m-1}^{2-\beta}\bigr)
              - \lambda_m^\beta t \sup_{[0,T]}\|X\|_H^2,
    \end{aligned}
  \]
  hence for $t=T$ we have
  \[
    X_m(T)
      \geq \psi(T) - \nu - \lambda_m^\beta M_4(T,\psi(T)) T.
  \]
  We can choose $\psi(T) = M_2 + 2\nu$ and notice that
  $M_4(T,M_2+2\nu) T\to 0$ as $T\downarrow0$, hence we can
  choose $T$ small enough so that
  $\lambda_m^\beta M_4(T,\psi(T)) T\leq\nu$, to obtain
  $X_m(T)\geq M_2$ and in conclusion $\|X(T)\|_H\geq M_2$.

  \noindent\emph{Step 3: Bound from below for $n=m$.}
  The choice of $\psi(T)$ and the computation in the above step yield
  \[
    \lambda_{m-1}^{\beta-2} X_m(t)
      \geq - (a_0 + c_0)\nu
           - \lambda^{2-\beta}M_4(T, M_2+2\nu)\lambda_m^{2\beta-2}  T,
  \]
  since $\psi$ is non--negative. By assumption we have that
  $a_0+c_0<a_0'$, hence, possibly fixing a smaller value of $T$ than
  the one chosen in the previous step, we can ensure that
  $X_m$ has $-a_0'\nu\lambda_{m-1}^{2-\beta}$ as lower bound on
  $[0,T]$. 

  \noindent\emph{Step 4: Bound from below for $n\neq m$.}
  If $n>m$, the proof proceeds as in Lemma~\ref{l:crucial},
  since $X_m$ appears in the system of equations for $(Y_n)_{n>m}$
  only through the \emph{positive} term $\lambda_m^\beta X_m^2$
  in the equation for the $(m+1)^\text{th}$ component (and
  since $c_0<\sqrt{a_0'}(1-\sqrt{a_0'})$ by the choice of
  $c_0$, $a_0'$).

  If $n<m$, the proof follows by finite induction. Indeed, the
  lower bound is true for $n=m$ by the previous step. Next
  we prove that if
  $\lambda_n^{2-\beta} Y_{n+1}(t)\geq -a_0'\nu$ for all $t\in[0,T]$,
  then $\lambda_{n-1}^{2-\beta} Y_n\geq -a_0'\nu$. We prove this
  similarly to the proof of Lemma~\ref{l:contraction}.
  Indeed, set $\eta_n = Y_n + a_0'\nu\lambda_{n-1}^{2-\beta}$,
  then using only that
  $\lambda_{n-1}^{\beta-2}|Z_n(t)|\leq c_0\nu$
  and $(X_{n+1}(t))_-\leq (a_0'+c_0)\nu\lambda_n^{2-\beta}$
  for $t\in[0,T]$,
  \[
    \begin{aligned}
      \dot\eta_n
        &\geq - (\nu\lambda_n^2 + \lambda_n^\beta X_{n+1})\eta_n
              + a_0'\nu^2\lambda^{\beta-2}\lambda_n^{4-\beta}
              - \lambda_n^\beta(a_0'\nu\lambda_{n-1}^{2-\beta} - Z_n)(X_{n+1})_-\\
        &\geq - (\nu\lambda_n^2 + \lambda_n^\beta X_{n+1})\eta_n
              + \nu\lambda^{\beta-2}\bigl(a_0' - (a_0'+c_0)^2\bigr)\lambda_n^{4-\beta}\\
        &\geq - (\nu\lambda_n^2 + \lambda_n^\beta X_{n+1})\eta_n,
    \end{aligned}
  \]
  since $c_0<a_0'<\tfrac14$, hence $c_0<\sqrt{a_0'}(1-\sqrt{a_0'})$.
  The fact that $\eta_n(0)\geq0$ implies that $\eta_n(t)\geq0$.
\end{proof}
Lemma~\ref{l:contraction} and Lemma~\ref{l:expansion} suggest how to have
a \emph{well-behaved} noise, namely a random perturbation that leads
the system from a ball in $H$ to a pre--blow--up set \eqref{e:bu_set}.
Let $c_0>0$, $t_0>0$, $T_c>0$, $T_e>0$ and $\psi:[0,T_e]\to\R$
be a non--negative non--increasing function, and define the event
\[
  \mathcal{N}(t_0;c_0,T_c,T_e,\psi)
    = \mathcal{N}_c(c_0,t_0,T_c)\cap \mathcal{N}_e(c_0,t_0+T_c,T_e,\psi)
\]
where
\[
  \mathcal{N}_c(c_0,t_0,T_c)
    = \bigl\{
        \lambda_{n-1}^{\beta-2}|Z_n^c(t)|\leq c_0\nu\text{ for all }n\geq1\text{ and }t\in[t_0,t_0+T_c]      
      \bigr\}\\
\]
and
\begin{multline*}
  \mathcal{N}_e(c_0,t_0+T_c,T_e,\psi)
      = \Bigl\{
          \sup_{[t_0+T_c,t_0+T_c+T_e]}\lambda_{m-1}^{\beta-2}|Z_m^e(t) - \psi_{t_0+T_c}(t)|\leq c_0\nu,\\
          \lambda_{n-1}^{\beta-2}|Z_n^e(t)|\leq c_0\nu\text{ for all }n\neq m\text{ and }t\in[t_0+T_c,t_0+T_c+T_e]
        \Bigr\},
\end{multline*}
$\psi_s:[s,s+T_e]\to\R$ is defined for $s\geq0$ as $\psi_s(t) = \psi(t-s)$,
for $t\in[s,s+T_e]$, $m$ is the smallest integer of the set
$\{n:\sigma_n\neq0\}$, and for every $n\geq1$,
\[
  \begin{gathered}
    Z^c_n(t)
      = \sigma_n\int_{t_0}^t \e^{-\nu\lambda_n^2(t-t_0)}\,dW_n,
      \qquad t\in[t_0,t_0+T_c],\\
    Z^e_n(t)
      = \sigma_n\int_{t_0+T_c}^t \e^{-\nu\lambda_n^2(t-t_0-T_c)}\,dW_n,
      \qquad t\in[t_0+T_c,t_0+T_c+T_e].
  \end{gathered}
\]
Given a weak martingale solution $\Prob_x$ with initial condition $x$, the
following facts are straightforward:
\begin{itemize}
  \item $\mathcal{N}_c(c_0,t_0,T_c)$ and $\mathcal{N}_e(c_0,t_0+T_c,T_e,\psi)$
    are independent,
  \item they both have positive probability (Lemma~\ref{l:moments_N} ensures
    that $\mathcal{N}_c$ has positive probability and the same methods can be
    used for $\mathcal{N}_e$),
  \item the value of their probability is independent of $t_0$,
  \item if $t_0,T_c,T_e$ and $t_0'$ are given such that $t_0+T_c+T_e\leq t_0'$,
    then $\mathcal{N}(t_0;c_0,T_c,T_e,\psi)$ and
    $\mathcal{N}(t_0';c_0,T_c,T_e,\psi)$ are independent.
\end{itemize}

The basic idea for the proof of Theorem~\ref{t:noway} is to show
that a pre--blow--up set \eqref{e:bu_set} is recurrent for the
system, knowing that there is no blow--up. Since a \emph{well-behaved}
randomness leads the system from a ball in $H$ to a pre--blow--up set,
it is sufficient to show that the fact that the system hits a ball
in $H$, of some radius, and \emph{afterwards} the randomness is
\emph{well-behaved} happens with probability one.
\begin{lemma}\label{l:recurrent}
  Let $\beta>3$, assume \eqref{e:noise} and \eqref{e:noise2}
  and consider $\alpha\in(\beta-2,\alpha_0+1)$.
  There exists $\cref{l:recurrent}>0$ such that
  if $M>0$, $T_c>0$, $T_e>0$, $c_0>0$, and $\psi:[0,T_e]\to\R$ is a
  non--decreasing and non--negative function and
  \[
    \frac{\cref{l:recurrent}}{M^2} + \e^{-\nu\lambda^2(T_c+T_e)}
      < 1,
  \]
  then for every $x\in V_\alpha$ and every energy martingale solution $\Prob_x$
  starting at $x$,
  \[
    \Prob_x\Bigl[\{\tau_\infty^\alpha=\infty\}
        \cap\bigcap_{k\geq1}\Bigr(
        \{\|X(kT)\|_H\leq M\}\cap\mathcal{N}(k T;c_0,T_c,T_e,\psi)\Bigl)^c\Bigr]
      = 0,
  \]
  where $T = T_c + T_e$.
\end{lemma}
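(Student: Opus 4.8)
The plan is to combine a conditional Borel--Cantelli argument with a Foster--Lyapunov drift estimate for the $H$--norm sampled along the grid $(kT)_{k\geq1}$. Write $A_k=\{\|X(kT)\|_H\leq M\}$, $\mathcal{N}_k=\mathcal{N}(kT;c_0,T_c,T_e,\psi)$ and $B_k=A_k\cap\mathcal{N}_k$, so that the event to be killed is $E=\{\tau_\infty^\alpha=\infty\}\cap\bigcap_{k\geq1}B_k^c$. Since $\mathcal{N}_k$ is built from the noise increments on $[kT,(k+1)T]$ only, the properties of $\mathcal{N}$ listed before the lemma give that the $\mathcal{N}_k$ are independent, each of the same probability $p_*=\Prob_x[\mathcal{N}_1]>0$ (positivity via Lemma~\ref{l:moments_N} and its analogue for $\mathcal{N}_e$), and that $\mathcal{N}_k$ is independent of $\field[B]_{kT}$. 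As $A_k\in\field[B]_{kT}$, I would record $\Prob_x[B_k\mid\field[B]_{kT}]=p_*\,\uno_{A_k}$.

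First I would apply the conditional (L\'evy) form of the second Borel--Cantelli lemma to the filtration $(\field[B]_{kT})_{k\geq0}$: because $B_k\in\field[B]_{(k+1)T}$, almost surely
\[
  \{B_k\text{ i.o.}\}
    = \Bigl\{\sum_{k\geq1}\Prob_x[B_k\mid\field[B]_{kT}]=\infty\Bigr\}
    = \Bigl\{\sum_{k\geq1}\uno_{A_k}=\infty\Bigr\}
    = \{A_k\text{ i.o.}\}.
\]
Hence $E\subseteq\{\tau_\infty^\alpha=\infty\}\cap\{A_k\text{ occurs finitely often}\}$ up to a null set, and everything reduces to showing that $A_k$ occurs infinitely often, $\Prob_x$--a.s. on $\{\tau_\infty^\alpha=\infty\}$.

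The heart of the matter is thus a drift estimate. Since $\beta>3$ forces $\alpha_0>\beta-3>0$ through \eqref{e:noise2}, the intensity $\sigma^2=\sum_n\sigma_n^2$ is finite; using the orthogonality of the dyadic nonlinearity and $\|\cdot\|_1^2\geq\lambda^2\|\cdot\|_H^2$, I would integrate $\tfrac{d}{dt}\|\xi\|_H^2+2\nu\lambda^2\|\xi\|_H^2\leq\sigma^2$ over $[kT,(k+1)T]$, via the energy/supermartingale inequality attached to $\Prob_x$ (Definition~\ref{d:energy} and the Remark following it), to obtain on $\{\tau_\infty^\alpha=\infty\}$
\[
  \E^{\Prob_x}\bigl[\|X((k+1)T)\|_H^2\mid\field[B]_{kT}\bigr]
    \leq \e^{-2\nu\lambda^2 T}\|X(kT)\|_H^2 + \cref{l:recurrent},
      \qquad
    \cref{l:recurrent}=\frac{\sigma^2}{2\nu\lambda^2}.
\]
The hypothesis $\cref{l:recurrent}/M^2+\e^{-\nu\lambda^2(T_c+T_e)}<1$, together with $\e^{-2\nu\lambda^2T}\leq\e^{-\nu\lambda^2T}$, then forces $\delta:=(1-\e^{-2\nu\lambda^2T})M^2-\cref{l:recurrent}>0$, so the drift of $\|X(\cdot\,T)\|_H^2$ is at most $-\delta$ whenever $\|X(kT)\|_H>M$. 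A standard positive--recurrence argument would finish the reduction: for fixed $k_0$ and $R$, on the event $\{\tau_\infty^\alpha=\infty\}\cap\{\|X(k_0T)\|_H\leq R\}\cap\bigcap_{k\geq k_0}\{\|X(kT)\|_H>M\}$ the nonnegative process $\|X((k_0+j)T)\|_H^2+j\delta$ would be a supermartingale, yielding $j\delta\leq R^2$ for every $j$, which is absurd; letting $R\uparrow\infty$ (legitimate since $\|X(k_0T)\|_H<\infty$ on $\{\tau_\infty^\alpha=\infty\}$) and then $k_0\uparrow\infty$ gives $A_k$ i.o., and therefore $\Prob_x[E]=0$.

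The hard part will be making the drift inequality rigorous for $\beta>3$: the global energy bound is not available on paths that blow up, so the supermartingale inequality can only be used on $\{\tau_\infty^\alpha=\infty\}$ (equivalently, for the process stopped at $\tau_\infty^\alpha$) and must be restarted at the deterministic times $kT$ through the almost sure Markov property of Theorem~\ref{t:markov}. Care is needed because that property, like the energy inequality of Definition~\ref{d:energy}, is only guaranteed for $s$ outside a Lebesgue--null set; I would circumvent this either by choosing the grid spacing to avoid the exceptional times or by first establishing the drift bound in its infinitesimal (semigroup) form and only then integrating over $[kT,(k+1)T]$.
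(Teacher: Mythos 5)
Your reduction via the conditional (L\'evy) Borel--Cantelli lemma is correct and is a genuinely different way to finish than the paper's: the paper instead shows that the sampled return times $K_1<K_2<\dots$ to the ball $\{\|x\|_H\leq M\}$ are a.s.\ finite on $\{\tau_x^\alpha=\infty\}$, and then proves $\Prob[L_0>\ell,\ \tau_x^\alpha=\infty]\leq(1-p)^\ell$ for the first index $j$ with $\uno_{\mathcal{N}_{K_j}}=1$, by conditioning at the (random) times $K_jT$. Your observation that $\Prob_x[B_k\mid\field[B]_{kT}]=p_*\uno_{A_k}$, because $\mathcal{N}_k$ is built from the Brownian increments on $[kT,(k+1)T]$ and hence is independent of $\field[B]_{kT}$ while $A_k\in\field[B]_{kT}$, is the same independence the paper exploits, and this part of your argument is sound and arguably cleaner.

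The gap is in the drift estimate. For an \emph{energy} martingale solution (Definition~\ref{d:energy}) the inequality
\[
  \E^{\Prob_x}\bigl[\|X((k+1)T)\|_H^2\mid\field[B]_{kT}\bigr]
    \leq \e^{-2\nu\lambda^2T}\|X(kT)\|_H^2+\cref{l:recurrent}
\]
is not available as you invoke it. Definition~\ref{d:energy} provides only a \emph{pathwise} almost--sure inequality for $\energy_t(Y,Z)$ with $Y=\xi-Z$, and its cross terms $\sum_n\lambda_n^\beta(y_n+z_n)(y_{n+1}z_n-y_nz_{n+1})$ carry a contribution of order $\bigl(\sup_n\lambda_n^{\beta-2}|z_n|\bigr)\|Y\|_1^2$, which cannot be absorbed into the dissipation $-2\nu\|Y\|_V^2$ unless $Z$ is small --- this is precisely why Lemmas~\ref{l:Hcontrol} and~\ref{l:expansion} are proved only on the noise events $\mathcal{N}$. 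The supermartingale property you cite is the \emph{alternative} definition in the Remark after Definition~\ref{d:energy}; the paper neither claims nor proves that every energy martingale solution satisfies it, and It\^o's formula for $\|\xi\|_H^2$ is exactly what is unavailable at the level of weak solutions. The paper circumvents all of this by proving the drift bound \eqref{e:expo} for the globally defined cutoff Markov process $X^R$ of \eqref{e:cutoff}, where It\^o's formula and the exact cancellation of the nonlinearity are rigorous and the Markov property holds at \emph{every} time with no exceptional null set; it then deduces the geometric bound \eqref{e:decay} for $X^R$ and only afterwards transfers to the weak solution, intersecting with $\{\tau_x^{\alpha,R}>nT\}$ (on which $X=X^R$) and letting $R\uparrow\infty$ via $\tau_x^\alpha=\sup_{R>0}\tau_x^{\alpha,R}$ and the identification in Theorem~\ref{t:markov}. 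Your two proposed fixes (choosing the grid to avoid exceptional times; an infinitesimal form of the bound) address only the null--set--of--times issue, not this structural one; note moreover that $T=T_c+T_e$ is prescribed by the statement, so the grid cannot be chosen freely, and that your final supermartingale contradiction conditions on non--adapted events such as $\{\tau_\infty^\alpha=\infty\}\cap\bigcap_{k\geq k_0}\{\|X(kT)\|_H>M\}$, which again requires stopped or cutoff processes to be made rigorous. If you reroute the drift step through $X^R$ and the limit $R\uparrow\infty$ as in the paper, your Borel--Cantelli conclusion then goes through.
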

\begin{proof}
  We first obtain a quantitative estimate on the return time in balls of $H$
  of the Markov process $X^R(\cdot;x)$, solution of problem \eqref{e:cutoff}
  with initial condition $x\in V_\alpha$, which has been introduced in the
  proof of Theorem~\ref{t:strong}, and then get the same estimate for the strong
  solution. This will allow us to prove the lemma.
  
  \noindent\emph{Step 1.}
  It is straightforward to prove by It\^o's formula (applied
  to $\|X^R(t;x)\|_H^2$) and Gronwall's lemma that
  \begin{equation}\label{e:expo}
    \E[\|X^R(t;x)\|_H^2]
      \leq \|x\|_H^2\e^{-2\nu\lambda^2t} + \cref{l:recurrent},
  \end{equation}
  where $\cref{l:recurrent}=(2\nu\lambda^2)^{-1}\sum_{n=1}^\infty\sigma_n^2$,
  and where the last series is convergent due to \eqref{e:noise} and to the assumption
  $\alpha_0>\beta-3$.
  
  \noindent\emph{Step 2.}
  We use the previous estimate to show that
  \begin{equation}\label{e:decay}
    \Prob\bigl[\|X^R(kT;x)\|_H\geq M\text{ for }k=1,\dots,n\bigr]
      \leq \Bigl(\e^{-\nu\lambda^2T} + \frac{\cref{l:recurrent}}{M^2}\Bigr)^{n-1}.
  \end{equation}
  We proceed for instance as in \cite[Lemma III.2.4]{Deb11}, we detail
  the proof for the sake of completeness. Define, for $k$ integer, 
  $C_k = \{\|X^R(kT;x)\|_H\geq M\}$ and $B_k = \bigcap_{j=0}^k C_j$,
  and set $\alpha_k = \E[\uno_{B_k}\|X^R(kT;x)\|_H^2]$
  and $p_k = \Prob[B_k]$.
  By the Markov property, Chebychev's inequality and \eqref{e:expo},
  \[
    \Prob[C_{k+1}|\field_{kT}]
      \leq \frac1{M^2}\e^{-\nu\lambda^2T}\|X^R(kT;x)\|_H^2
           + \frac{\cref{l:recurrent}}{M^2},
  \]
  hence
  \[
    p_{k+1}
      =    \E\bigl[\uno_{B_k}\Prob[C_{k+1}|\field_{kT}]\bigr]
      \leq \frac1{M^2}\e^{-\nu\lambda^2T}\alpha_k
           + \frac{\cref{l:recurrent}}{M^2}p_k.
  \]
  On the other hand, by integrating \eqref{e:expo} on $B_k$, we get
  \[
    \alpha_{k+1}
      \leq \E[\uno_{B_k}\|X^R((k+1)T;x)\|_H^2]
      \leq \e^{-\nu\lambda^2T}\alpha_k + \cref{l:recurrent}p_k
  \]
  If $(\widetilde\alpha_k)_{k\in\N}$ and $(\widetilde p_k)_{k\in\N}$
  are the solutions to the recurrence system
  \[
    \begin{cases}
      \widetilde\alpha_{k+1}
        = \e^{-\nu\lambda^2T}\widetilde\alpha_k + \cref{l:recurrent}\widetilde p_k,\\
      \widetilde p_{k+1}
        = \frac1{M^2}\e^{-\nu\lambda^2T}\widetilde\alpha_k + \frac{\cref{l:recurrent}}{M^2}\widetilde p_k,
    \end{cases}
    \qquad k\geq1,
  \]
  with $\widetilde p_1 = p_1$ and $\widetilde\alpha_1 = \alpha_1$,
  then $\alpha_k\leq\widetilde\alpha_k$ and $p_k\leq\widetilde p_k$
  for all $k\geq1$ and \eqref{e:decay} easily follows, since
  $\widetilde{\alpha}_k = M^2\widetilde p_k$ for $k\geq2$.
  
  \noindent\emph{Step 3.}
  By the previous step,
  \[
    \begin{aligned}
      \lefteqn{\Prob[\|X(kT;x)\|_H\geq M, k=1,\dots,n,\tau_x^\alpha=\infty]}\qquad\\
        &\leq\lim_{R\uparrow\infty}\Prob[\|X(kT;x)\|_H\geq M, k=1,\dots,n,\tau_x^{\alpha,R}>nT]\\
        &=   \lim_{R\uparrow\infty}\Prob[\|X^R(kT;x)\|_H\geq M, k=1,\dots,n,\tau_x^{\alpha,R}>nT]\\
        &\leq\limsup_{R\uparrow\infty}\Prob[\|X^R(kT;x)\|_H\geq M, k=1,\dots,n]\\
        &\leq\Bigl(\e^{-\nu\lambda^2T} + \frac{\cref{l:recurrent}}{M^2}\Bigr)^{n-1},
    \end{aligned}
  \]  
  since $\tau_x^\alpha = \sup_{R>0}\tau_x^{\alpha,R}$. Hence, if we define
  the (discrete) hitting time $K_1 = \min\{k\geq0: \|X(kT;x)||_H\leq M\}$
  of the ball $B_M(0)$ in $H$ (and $K_1=\infty$ if the set is empty), then
  $K_1<\infty$ on the event $\{\tau_x^\alpha=\infty\}$. Likewise,
  define the sequence of (discrete) return times $K_2 = \min\{k>K_1: \|X(kT;x)||_H\leq M\}$,
  \dots, $K_j = \min\{k>K_{j-1}: \|X(kT;x)||_H\leq M\}$, \dots,
  with the understanding that a return time is infinite if the corresponding
  set is empty (this in particular happens if some previous return time is
  already infinite). The previous step immediately implies that $K_j<\infty$
  for each $j\geq1$ on the event $\{\tau_x^\alpha=\infty\}$.
  
  \noindent\emph{Step 4.}
  Consider for $k\geq 1$ the events
  $\mathcal{N}_k = \mathcal{N}(kT;c_0,T_c,T_e,\psi)$. By the considerations
  we have stated above, we know that $\Prob[\mathcal{N}_k]$ is constant in $k$,
  and we set $p=\Prob[\mathcal{N}_k]$. Moreover, by the choice of $T$,
  it turns out that $\mathcal{N}_1, \mathcal{N}_2, \dots, \mathcal{N}_k, \dots$
  are all independent. Set $\mathcal{N}_\infty = \emptyset$ and define the
  time
  \[
    L_0
      = \min\{j\geq1: \uno_{\mathcal{N}_{K_j}} = 1\},
  \]
  and $L_0 = \infty$ if the set is empty. Notice that if $L_0$ is finite,
  then $\|X(K_{L_0}T;x)\|_H\leq M$ and the random perturbation
  is \emph{well-behaved} in the time interval $[K_{L_0}T, K_{L_0}T + T_c + T_e]$.
  Hence the lemma is proved if we show that
  \begin{equation}\label{e:claim}
    \Prob[L_0 = \infty, \tau_x^\alpha=\infty]
      = 0.
  \end{equation}
  
  \noindent\emph{Step 5.}
  Given an integer $\ell\geq1$, we have that
  \[
    \begin{aligned}
    \Prob[L_0>\ell, \tau_x^\alpha=\infty]
      &= \Prob[\mathcal{N}_{K_1}^c\cap\dots\mathcal{N}_{K_\ell}^c\cap\{\tau_x^\alpha=\infty\}]\\
      &= \sum_{k_1=1}^\infty\dots\sum_{k_\ell=k_{\ell-1}+1}^\infty \Prob[S_\ell(k_1,\dots,k_l)\cap\{\tau_x^\alpha=\infty\}],
    \end{aligned}
  \]
  where we have set
  \[
    S_\ell(k_1,\dots,k_\ell)
      = \mathcal{N}_{K_1}^c\cap\dots\mathcal{N}_{K_\ell}^c
        \cap\{K_1=k_1,\dots,K_\ell=k_\ell\}.
  \]
  Notice that $S_{\ell}(k_1,\dots,k_\ell)\in\field_{(k_\ell+1)T}$,
  hence by the Markov property,
  \[
    \begin{aligned}
      \lefteqn{\Prob[S_\ell(k_1,\dots,k_l)\cap\{\tau_x^\alpha>(k_\ell+1)T\}]}\qquad\\
        &= \E\bigl[\uno_{S_{\ell-1}(k_1,\dots,k_{\ell-1})}
             \uno_{\{\tau_x^\alpha>(k_{\ell-1}+1)T\}} \uno_{\{K_\ell=k_\ell\}}
             \Prob[\mathcal{N}_{k_\ell}^c\cap\{\tau_{X(k_\ell T;x)}^\alpha>T\}|\field_{k_\ell T}]\bigr]\\
        &\leq(1-p)\Prob[S_{\ell-1}(k_1,\dots,k_{\ell-1})\cap
               \{\tau_x^\alpha>(k_{\ell-1}+1)T\}\cap\{K_\ell=k_\ell\}].
    \end{aligned}
  \]
  By summing up over $k_\ell$, we have
  \begin{multline*}
    \sum_{k_\ell=k_{\ell-1}+1}^\infty \Prob[S_\ell(k_1,\dots,k_\ell)\cap\{\tau_x^\alpha>(k_\ell+1)T\}]\leq\\
      \leq (1-p) \Prob[S_{\ell-1}(k_1,\dots,k_{\ell-1})\cap\{\tau_x^\alpha>(k_{\ell-1}+1)T\}]
  \end{multline*}
  and the iteration of this argument finally yields
  \[
    \Prob[L_0>\ell, \tau_x^\alpha=\infty]
      \leq (1-p)^\ell,
  \]
  hence the claim \eqref{e:claim}.
\end{proof}
We have all the ingredients to complete the proof of the main theorem of this section.
\begin{proof}[Proof of Theorem~\ref{t:noway}]
  Fix $\alpha\in(\beta-2,1+\alpha_0)$, $\overline p\in(0,\beta-3)$ and
  $\overline{a_0}\in(0,\tfrac14]$, and consider the values $\overline{p_0}>0$,
  and $\overline{M_0}>0$ given by Theorem~\ref{t:blowup}.
  In view of Corollary~\ref{c:recurrent}, it is enough to prove that the (sampled)
  arrival time to $B_\infty(\alpha,\overline p, \overline{a_0}, \overline{M_0})$
  is finite on the event $\{\tau_x^\alpha=\infty\}$, for all $x\in V_\alpha$.
  To prove this, by virtue of Lemma~\ref{l:recurrent}, it is sufficient to prove
  that there are $M,T_c,T_e,c_0>0$ and a function $\psi$ as in the statement
  of the lemma such that $\e^{-\nu\lambda^2T_c}
  +\frac{\cref{l:recurrent}}{M^2}<1$ and
  \begin{equation}\label{e:claim2}
    \left.
      \begin{array}{c}
        \|X(t_0;x)\|_H\leq M\\
        \mathcal{N}(t_0;c_0,T_c,T_e,\psi)
      \end{array}
    \right\}
    \quad\Rightarrow\quad
    X(t_0+T_c+T_e;x)\in B_\infty(\alpha,\overline p, \overline{a_0}, \overline{M_0}).
  \end{equation}
  Indeed, the left--hand side of the above implication happens almost surely
  on $\{\tau_x^\alpha<\infty\}$ for some integer $k$ such that $t_0 = k(T_c+T_e)$,
  hence the right--hand side happens with the same probability and
  $\Prob[\sigma_{B_\infty}^{x,T_c+T_e}=\infty,\tau_x^\infty=\infty] = 0$.

  We conclude with the proof of \eqref{e:claim2}. We first notice that in
  Lemma~\ref{l:contraction}, the larger we choose $M$, the larger is the time $T_c$
  we get, hence we apply Lemma~\ref{l:contraction} with $a_0 = \frac18\overline{a_0}$,
  $c_0<\min\{\frac18\overline{a_0}, (4(1+\lambda^{\beta-3}))^{-1}\}$ and $M>0$
  large enough so that the time $T_c$ whose existence is ensured by the lemma
  satisfies $\e^{-\nu\lambda^2T_c} +\frac{\cref{l:recurrent}}{M^2}<1$.
  Moreover we know that
  \begin{itemize}
    \item $\inf_{n\geq 1}\bigl(\lambda_{n-1}^{\beta-2}X_n(t_0+T_c)\bigr)
     \geq -(a_0 + c_0)\nu\geq -\frac14 \overline{a_0}\nu$ on the event
      $\{\tau_x^\alpha=\infty\}\cap\mathcal{N}_c(c_0,t_0,T_c)$,
    \item $\|X(t_0+T_c)\|_H\leq \|X(t_0)\|_H + \cref{l:Hcontrol}\nu\leq M + \cref{l:Hcontrol}\nu$,
  \end{itemize}
  where the second statement follows from Lemma~\ref{l:Hcontrol}. We apply
  then Lemma~\ref{l:expansion} with $M_1=M + \cref{l:Hcontrol}\nu$,
  $M_2=\overline{M_0}$, $a_0 = \frac14\overline{a_0}$, $a_0' = 2a_0$
  and $c_0$ as above, hence there is $T_e>0$ such that
  \begin{itemize}
    \item $\inf_{n\geq 1}\bigl(\lambda_{n-1}^{\beta-2}X_n(t_0+T_c+T_e)\bigr)
     \geq -(a_0' + c_0)\nu\geq - \overline{a_0}\nu$ on the event
      $\{\tau_x^\alpha=\infty\}\cap\mathcal{N}_e(c_0,t_0+T_c,T_e,\psi)$,
    \item $\|X(t_0+T_c+T_e)\|_{\overline p}\geq \lambda^{\overline p}\|X(t_0+T_c+T_e)\|_H\geq \overline{M_0}$,
  \end{itemize}
  that is $X(t_0+T_c+T_e)\in B_\infty(\alpha,\overline p,\overline{a_0},
  \overline{M_0})$.
\end{proof}
\bibliographystyle{amsplain}

\end{document}